\begin{document}

\newtheorem{thm}{Theorem}[section]
\newtheorem{cor}{Corollary}[section]
\newtheorem{lem}{Lemma}[section]
\newtheorem{prop}{Proposition}[section]
\newtheorem{defn}{Definition}[section]
\newtheorem{rk}{Remark}[section]
\newtheorem{nota}{Notation}[section]
\newtheorem{Ex}{Example}[section]
\def\nm{\noalign{\medskip}}

\numberwithin{equation}{section}

\newcommand{\ds}{\displaystyle}
\newcommand{\pf}{\medskip \noindent {\sl Proof}. ~ }
\newcommand{\p}{\partial}
\renewcommand{\a}{\alpha}
\newcommand{\z}{\zeta}
\newcommand{\pd}[2]{\frac {\p #1}{\p #2}}
\newcommand{\norm}[1]{\left\| #1 \right \|}
\newcommand{\dbar}{\overline \p}
\newcommand{\eqnref}[1]{(\ref {#1})}
\newcommand{\na}{\nabla}
\newcommand{\Om}{\Omega}
\newcommand{\ep}{\epsilon}
\newcommand{\tmu}{\widetilde \epsilon}
\newcommand{\vep}{\varepsilon}
\newcommand{\tlambda}{\widetilde \lambda}
\newcommand{\tnu}{\widetilde \nu}
\newcommand{\vp}{\varphi}
\newcommand{\RR}{\mathbb{R}}
\newcommand{\CC}{\mathbb{C}}
\newcommand{\NN}{\mathbb{N}}
\renewcommand{\div}{\mbox{div}~}
\newcommand{\bu}{{\bf u}}
\newcommand{\la}{\langle}
\newcommand{\ra}{\rangle}
\newcommand{\Scal}{\mathcal{S}}
\newcommand{\Lcal}{\mathcal{L}}
\newcommand{\Kcal}{\mathcal{K}}
\newcommand{\Dcal}{\mathcal{D}}
\newcommand{\tScal}{\widetilde{\mathcal{S}}}
\newcommand{\tKcal}{\widetilde{\mathcal{K}}}
\newcommand{\Pcal}{\mathcal{P}}
\newcommand{\Qcal}{\mathcal{Q}}
\newcommand{\id}{\mbox{Id}}
\newcommand{\stint}{\int_{-T}^T{\int_0^1}}

\newcommand{\be}{\begin{equation}}
\newcommand{\ee}{\end{equation}}

\newcommand{\rd}{{\mathbb R^d}}
\newcommand{\rr}{{\mathbb R}}
\newcommand{\alert}[1]{\fbox{#1}}
\newcommand{\eqd}{\sim}
\def\R{{\mathbb R}}
\def\N{{\mathbb N}}
\def\Q{{\mathbb Q}}
\def\C{{\mathbb C}}
\def\ZZ{{\mathbb Z}}
\def\l{{\langle}}
\def\r{\rangle}
\def\t{\tau}
\def\k{\kappa}
\def\a{\alpha}
\def\la{\lambda}
\def\De{\Delta}
\def\de{\delta}
\def\ga{\gamma}
\def\Ga{\Gamma}
\def\ep{\varepsilon}
\def\eps{\varepsilon}
\def\si{\sigma}
\def\Re {{\rm Re}\,}
\def\Im {{\rm Im}\,}
\def\E{{\mathbb E}}
\def\P{{\mathbb P}}
\def\Z{{\mathbb Z}}
\def\D{{\mathbb D}}
\def\p{\partial}
\newcommand{\ceil}[1]{\lceil{#1}\rceil}

\title{ Vanishing-spreading dichotomy in a two-species chemotaxis  competition
system with a free boundary}

\author{Lianzhang Bao\thanks{Department of Mathematics and Statistics,
Auburn University,  AL 36849, U. S. A. (lzb0059@auburn.edu)}\,\,
 and
Wenxian Shen \thanks{Department of Mathematics and Statistics,
Auburn University,
 AL 36849, U. S. A. (wenxish@auburn.edu)}}

\date{}

\maketitle

\begin{abstract}
Predicting the evolution of expanding population is critical to control biological threats such as invasive species and virus explosion. In this paper, we consider a two species chemotaxis system of parabolic-parabolic-elliptic type with Lotka-Volterra type  competition terms and a free boundary.
Such a model with a free boundary  describes the spreading of  new or invasive species subject to the influence of some chemical substances
in an environment with a free boundary representing the spreading front. We first find  conditions on the parameters which guarantee the global existence and boundedness of classical solutions with nonnegative initial functions. Next, we investigate vanishing-spreading dichotomy scenarios for positive solutions. It is shown that the  vanishing-spreading dichotomy
in the generalized sense always occurs; that the vanishing spreading dichotomy in the strong sense
occurs when the competition between two species is weak-weak competition; and that the vanishing spreading dichotomy in the weak sense occurs when
the competition between two species is weak-strong competition.

\end{abstract}

\textbf{Key words.} Chemotaxis-competition model, nonlinear parabolic equations, free boundary problem, spreading-vanishing dichotomy, invasive population.

\medskip

\textbf{AMS subject classifications.}
35R35, 35J65, 35K20, 92B05.

\section{Introduction}

The current  paper is  to  study  spreading and vanishing dynamics of
 the following chemotaxis system of parabolic-parabolic-elliptic type with Lotka-Volterra type competition terms and a free boundary,
\begin{equation}\label{one-free-boundary-eq-0}
\begin{cases}
u_t = u_{xx} -\chi_1  (u  w_{x})_x  + u(1 - u - a v), \quad 0<x<h(t)
\\
 v_t =  dv_{xx} - \chi_2  (v  w_{x})_x + rv(1 - bu - v), \quad 0<x<h(t)
 \\
 0 = w_{xx} + ku + lv -\lambda w,  \quad 0<x<h(t)
 \\
 h'(t) = -\mu_1 u_x(t,h(t))-\mu_2 v_x(t,h(t))
\\
u_x(t,0) = v_{x}(t,0) = w_{x}(t,0) = 0
\\
 u(t,h(t)) = v(t,h(t)) = w_{x}(t,h(t)) = 0
 \\
 h(0) = h_0,\quad u(0,x) = u_0(x), v(0,x) = v_0(x)\quad  0\leq x\leq h_0,
\end{cases}
\end{equation}
where the parameters $a,b,r,\mu_i, \chi_i (i=1,2), k, l, \lambda$ are positive constants, the initial $(u_0,v_0, h_0)$ satisfying
\begin{equation}\label{Initial-uv}
\begin{cases}
h_0 > 0,\quad u_0,v_0 \in C^2([0,h_0]), \quad u_0(x),v_0(x) >0 \quad\mbox{for}\quad x\in [0,h_0),
\\
u_0(h_0) =v_0(h_0) = u'_0(0) = v'_0(0) = 0.
\end{cases}
\end{equation}
This model describes how the two competing species with chemotaxis invade if they initially occupy the region $[0,h_0]$. It is assumed that the left boundary is fixed and no flux across the left boundary $x=0$, namely, we impose the zero Neumann boundary condition for $x=0$. Also, we assume that both species have a tendency to emigrate from the right boundary to obtain their new habitat. Moreover, it is assumed that the expanding speed of the free boundary is proportional to the normalized population gradient at the free boundary. We call the free boundary $x=h(t)$ the spreading front. This setting for two competing species with a free boundary is motivated by the work of Guo and Wu \cite{Guo2012on} who investigated two competing species with a free boundary, and the works of Bao and Shen \cite{BaoShen1}, \cite{BaoShen2} who investigated a parabolic-elliptic chemotaxis system with a free boundary. In \cite{BaoShen2}, the invasive species produce both attractive and repulsive chemical substances and move along the gradient of them.

Chemotaxis is the influence of chemical substances in the environment on the movement of mobile species. This can lead to strictly oriented movement or to partially oriented and partially tumbling movement. The movement towards a higher concentration of the chemical substance is termed positive chemotaxis and the movement towards regions of lower chemical concentration is called negative chemotaxis. The substances that lead to positive chemotaxis are chemoattractants and those leading to negative chemotaxis are called chemorepellents.

 One of the first mathematical models of chemotaxis was introduced by Keller and Segel (\cite{Keller1970Initiation}, \cite{Keller1971Model}) to describe the aggregation of certain type of bacteria. A simplified version of their model involves the distribution $u$ of the density of the slime mold \textit{Dyctyostelum discoideum} and the concentration $v$ of a certain chemoattractant satisfying the following system of partial differential equations
\begin{equation}\label{KS}
\begin{cases}
 u_t = \nabla\cdot(\nabla u - \chi u\nabla v) + G(u), \quad x\in\Omega
 \\
 \epsilon v_t =d\Delta v + F(u,v),\quad x\in\Omega
\end{cases}
\end{equation}
complemented with certain boundary condition on $\partial \Omega$ if $\Omega$ is bounded, where $\Omega \subset \mathbb{R}^N$ is an open domain, $\epsilon \geq 0$ is a non-negative constant linked to the speed of diffusion of the chemical, $\chi$ represents the sensitivity with respect to chemotaxis, and the functions $G$ and $F$ model the growth of the mobile species and the chemoattractant, respectively.  In the last two decades, considerable progress has been made in the analysis of various particular cases of \eqref{KS}. The reader is referred
to \cite{Bellomo2015Toward} for a recent survey.

Motivated by the question whether multi-species chemotaxis mechanisms can be responsible for process of cell sorting, a natural extension or generalization of the classical parabolic-elliptic Keller-Segel system is the following two species parabolic-parabolic-elliptic chemotaxis system
\begin{equation}\label{competition-chemo-sys}
\begin{cases}
u_t = d_1\Delta u -\chi_1  \nabla\cdot(u  \nabla w)  + u(a_0 - a_1u - a_2v),\quad x\in\Omega,t>0,
\\
 v_t =  d_2\Delta v - \chi_2  \nabla\cdot(v  \nabla w) + v(b_0 - b_1u - b_2 v),\quad x\in\Omega,t>0,
 \\
 0 = \Delta w + ku + lv -\lambda w,  \quad x\in\Omega,t>0
\end{cases}
\end{equation}
with suitable condition of solutions on the boundary of $\Omega$ and two species are attracted by the same chemical stimulus. Dynamical issues such as persistence, coexistence, and extinction have been extensively studied (see  \cite{Black2016on}, \cite{Issa-Shen-2018-1}, \cite{Issa-Shen-2018-2}, \cite{Miz}, \cite{Negreanu2013on},  \cite{Stinner2014competive}, \cite{Tello2012stabilization}, \cite{TuMu}, \cite{Wan},   etc.).

 In practice,
 the invasion of  species into new habitat may have a spreading front which is finite at a given time and is unknown.
  Due to the lack of first principles for the ecological situation under consideration, a thorough justification of
  the boundary condition on the unknown spreading front, referred to as the free boundary condition,  is difficult to supply.
  In the one space dimension case, as in \cite{BaoShen1}, \cite{Bunting2012spreading},  based on the assumptions of ``population loss" at the front, Fick's law, and the assumption that, near the propagating front,
   population density is  close to zero, we can derive the following Stefan boundary condition
   on the unknown front $x=h(t)$ for two species chemotaxis systems,
\begin{equation}
h'(t) = -\mu_1 u_x(t,h(t))-\mu_2 v_x(t,h(t)),
\end{equation}
which is the free boundary condition
    in \eqref{one-free-boundary-eq-0}.

    In the absence of diffusion, system \eqref{one-free-boundary-eq-0} becomes to Lotka-Voterra ODE system
\begin{equation}
\label{ode-eq}
\begin{cases}
u_t = u(1 - u - av), \quad t>0,
\\
 v_t = rv(1 - bu - v), \quad t>0.
\end{cases}
\end{equation}
It is clear that it has at least three constant equilibrium solutions $(u,v) = (0,0), (0,1),$ and $(1,0)$. Moreover, if either $a,b>1$ or $0<a,b<1$, then there exists a unique positive constant equilibrium $(\frac{1-a}{1-ab}, \frac{1-b}{1-ab})$.
 The asymptotic dynamics of \eqref{ode-eq} depends on the strength of the competition coefficients $a$ and $b$.
      We say that {\it  the competition on the species $u$ (resp. on the  species  $v$) is strong} if $a>1$ (resp. $b>1$).
     We say that {\it the competition on the  species $u$ (resp. on the  species $v$) is weak} if $0<a<1$ (resp. $0<b<1$).
     Based on the magnitudes of $a$ and $b$, the following four important cases arise.

     \medskip

     \noindent {\bf $\bullet$}
     $0<a,b<1$, which is referred to as the {\it weak-weak competition case}.

     \smallskip

     \noindent {\bf $\bullet$} $0<a<1<b$,  which is referred to as the {\it weak-strong competition case}.

     \smallskip

\noindent {\bf $\bullet$} $0<b<1<a$, which is referred to as the {\it strong-weak competition case}.

\smallskip

     \noindent {\bf $\bullet$}   $a,b>1$, which is referred to as the {\it strong-strong competition case}

\medskip

 The weak-weak competition is also referred to as the {\it weak competition} and
  the strong-strong competition is referred to as the {\it strong competition}.
  Note that for $0<a,b <1, \lim_{t\rightarrow\infty}(u(t),v(t)) = (\frac{1-a}{1-ab}, \frac{1-b}{1-ab})$  for any positive solution $(u(t),v(t))$ of \eqref{ode-eq}.
For $0< a <1 < b, \lim_{t\rightarrow\infty} (u(t),v(t)) = (1,0);$ for $0<b<1<a, \lim_{t\rightarrow} (u(t),v(t)) =(0,1)$ for any positive solution $(u(t),v(t))$ of \eqref{ode-eq}.   For $a,b >1, (1,0)$ and $(0,1)$ are locally stable, almost every positive solution of \eqref{ode-eq} tends to $(1,0)$ or $(0,1)$ as $t\rightarrow \infty$.

In the absence of chemotaxis, that is $\chi_1 = \chi_2 =0$, the dynamics of \eqref{one-free-boundary-eq-0} is determined by the following two species competition system with a free boundary
\begin{equation}\label{competition-free-boundary}
\begin{cases}
u_t = u_{xx}   + u(1 - u - av), \quad 0<x<h(t)
\\
 v_t =  dv_{xx}  + rv(1 - bu -v ), \quad 0<x<h(t)
 \\
 h'(t) = -\mu_1 u_x(t,h(t))-\mu_2 v_x(t,h(t)),
\\
u_x(t,0) = v_{x}(t,0) =  0,
\\
 u(t,h(t)) = v(t,h(t)) =  0,
 \\
 h(0) = h_0,\quad u(0,x) = u_0(x), v(0,x) = v_0(x)\quad  0\leq x\leq h_0,
\end{cases}
\end{equation}
which was first introduced by Guo and Wu in \cite{Guo2012on} to understand the spreading of two weak competing species via a free boundary.
Assume that $0<a,b<1$.
It is proved in \cite{Guo2012on} that \eqref{competition-free-boundary} exhibits the following spreading vanishing dichotomy: for any given $h_0 >0$ and either vanishing occurs (i.e. $\lim_{t\rightarrow\infty}h(t;u_0,v_0,h_0) <\infty$ and $\lim_{t\rightarrow\infty}u(t,x;u_0,v_0,h_0) =$ $\lim_{t\rightarrow\infty}v(t,x;u_0,v_0,h_0) =0$) or spreading occurs
(i.e. $\lim_{t\rightarrow\infty}h(t;u_0,v_0,h_0) = \infty$ and $\lim_{t\rightarrow\infty}(u,v)(t,x) = (\frac{1-a}{1-ab}, \frac{1-b}{1-ab})$ locally uniformly in $x\in \mathbb{R}^+$).  It should be pointed out that the work \cite{WaZh} removes the weak competition condition and
 gives  an improvement and extension
of the work \cite{Guo2012on}.  The reader is referred to \cite{DuLi1, DuWaZh, WaNiDu,  Wu1}, etc. for other studies  of two species competition systems with a free boundary.

In the current paper, we will study spreading and vanishing phenomena in \eqref{one-free-boundary-eq-0} in the weak competition
case as well as other cases. In the rest of the introduction, we introduce some standing  assumptions and notations
in subsection 1.1, state the main results in subsection 1.2, and make some remarks on the main results in subsection 1.3.

\subsection{Standing assumptions and notations}

Observe that, for  any $h_0>0$ and any function $u_0(x), v_0(x)$ on $[0,h_0]$ satisfying \eqref{Initial-uv}, \eqref{competition-free-boundary}  has a unique globally defined bounded solution $(u(t,x)$, $v(t,x)$,
 $h(t))$
with $u(0,x)=u_0(x)$, $v(0,x)=v_0(x)$
and $h(0)=h_0$ (see \cite[Theorem 1]{Guo2012on}).
The first standing assumption is on the global existence and boundedness of classical solutions of \eqref{one-free-boundary-eq-0}.

\medskip

\noindent ({\bf{H1}}) {\it The coefficients $a, b, r, \chi_i, k$, and $l$ satisfy
\begin{equation}\label{H1}
  1> k\chi_1,\hspace{0.5cm}a > l\chi_1, \hspace{0.5cm} rb > k\chi_2,\hspace{0.2cm}\mbox{and}\hspace{0.2cm}r > l\chi_2.
\end{equation} }

Note that when $\chi_1=\chi_2=0$, \eqref{H1} becomes $a>0$, $b>0$, and $r>0$. Assuming {\bf (H1)}, it will be proved that,
for any $h_0,u_0,v_0$ satisfying \eqref{Initial-uv}, \eqref{one-free-boundary-eq-0} has a unique globally defined solution
(see Theorem \ref{free-boundary-thm1}). We denote it by $(u(t,x;u_0,v_0,h_0)$, $v(t,x;u_0,v_0,h_0)$, $w(t,x;u_0,v_0,h_0)$,
$h(t;u_0,v_0,h_0))$.  Moreover, it will also be proved that the spreading and vanishing dichotomy
  in the generalized sense occurs  (see Theorem \ref{free-boundary-thm2}).

\medskip

 Observe that, for  any given $h_0>0$ and  $u_0(\cdot),v_0(\cdot)$ satisfying
\eqref{Initial-uv}, if $(u(t,x;u_0,v_0,h_0)$, $v(t,x;u_0,v_0,h_0)$, $w(t,x;u_0,v_0,h_0)$,
$h(t;u_0,v_0,h_0))$ exists for all $t>0$,
then  $h^{'}(t;u_0,v_0,h_0)\ge 0$ for all $t>0$. Hence
$\lim_{t\to\infty}h(t;u_0,v_0,h_0)$ exists. Put
$$
h_\infty(u_0,v_0,h_0)=\lim_{t\to\infty} h(t;u_0,v_0,h_0).
$$
We say that {\it the two species vanish eventually }if
 $$ h_{\infty}(u_0,v_0,h_0) < +\infty$$
  and
\begin{equation}
\lim_{t\rightarrow\infty} \|u(t,\cdot;u_0,v_0,h_0)\|_{C([0,h(t)])} = \lim_{t\rightarrow\infty} \|v(t,\cdot;u_0,v_0,h_0)\|_{C([0,h(t)])} =0.
\end{equation}
We say that the {\it two species spread  successfully in the generalized sense} if
 $$h_{\infty}(u_0,v_0,h_0) = +\infty$$
 and for any $m>0$,
 $$
 \liminf_{t\to \infty} \inf_{x\in[0,m]} \big(u(t,x;u_0,v_0,h_0)+v(t,x;u_0,v_0,h_0)\big)>0;
 $$
 that  the {\it two species spread  successfully in the weak sense} if for any $m>0$ and any $u_0,v_0,h_0$ with
 $h_{\infty}(u_0,v_0,h_0) = +\infty$,
 $$
 \liminf_{t\to \infty} \inf_{x\in[0,m]} u(t,x;u_0,v_0,h_0)>0,
 $$
 or if for any $m>0$ and any $u_0,v_0,h_0$ with
 $h_{\infty}(u_0,v_0,h_0) = +\infty$,
 $$
 \liminf_{t\to \infty} \inf_{x\in[0,m]} v(t,x;u_0,v_0,h_0)>0;
 $$
 and that the {\it two species spread  successfully in the strong sense} if
 $$h_{\infty}(u_0,v_0,h_0) = +\infty$$
 and for any $m>0$,
 \begin{equation}\label{two-persist}
\liminf_{t\rightarrow +\infty}\min\{\inf_{0\leq x\leq m} u(t,x;u_0,v_0,h_0), \inf_{0\leq x\leq m} v(t,x;u_0,v_0,h_0)\}>0.
 \end{equation}

The next standing assumption is  on the successful spreading in \eqref{one-free-boundary-eq-0} in the strong
 sense.

\medskip

\noindent ({\bf{H2}}) {\it The coefficients $a, b, r, \chi_i, k$, and $l$ satisfy  ({\bf{H1}}) and
\begin{equation}\label{H2}
 1 > a\bar{A}_2+\chi_1 k\bar A_1\hspace{0.1cm},\hspace{0.1cm} r > rb\bar{A}_1+\chi_2 l\bar A_2,
\end{equation}
\begin{equation}\label{Cond-chi-1-2-A}
\chi_1{<} \frac{4\sqrt \lambda \big( 1 - a\bar{A}_2-\chi_1 k\bar A_1  \big)^{1/2}}{\bar{A}_1k + \bar{A}_2l},\,\, { \chi_2 <}\frac{4\sqrt{d\lambda} \big(r - rb\bar{A}_1-\chi_2 l\bar A_2 \big)^{1/2}}{\bar{A}_1k + \bar{A}_2l},
\end{equation}
where
\begin{equation}\label{Equ-A}
 \bar{A}_1 = \frac{1}{1-k\chi_1},\hspace{0.5cm} \bar{A}_2 = \frac{r}{r-l\chi_2}.
\end{equation}
 }

\smallskip

Observe that,
 under the assumption ({\bf{H1}}), $(\bar{A}_1,\bar{A}_2)$ is the unique positive equilibrium of the following decoupled system
\begin{equation*}
\begin{cases}
u_t = u(1 - (1 - k\chi_1)u)
\\
v_t = v(r - (r - l\chi_2)v).
\end{cases}
\end{equation*}
Observe also that, when $\chi_1=\chi_2=0$, {\bf (H2)} becomes
$$
0<a<1,\quad 0<b<1,\quad r>0,
$$
which is the weak competition condition. We will prove that, under the assumption {\bf (H2)}, the spreading and vanishing dichotomy in the strong sense occurs (see Theorem \ref{free-boundary-thm3}).

The next two standing assumptions are   on the successful spreading in \eqref{one-free-boundary-eq-0} in the weak
 sense.

\medskip

\noindent {\bf (H3)} {\it The coefficients $a, b, r, \chi_i, k$, and $l$ satisfy {\bf (H1)} and
\begin{equation}\label{H3}
 1 > a\bar{A}_2+\chi_1 k\bar A_1\hspace{0.1cm},\hspace{0.1cm} b\ge 1,
\end{equation}
\begin{equation}\label{new-Cond-chi-1-2-A}
\chi_1{<} \frac{4\sqrt \lambda \big( 1 - a\bar{A}_2-\chi_1 k\bar A_1  \big)^{1/2}}{\bar{A}_1k + \bar{A}_2l}.
\end{equation}
}

\smallskip

\noindent {\bf (H4)} {\it The coefficients $a, b, r, \chi_i, k$, and $l$ satisfy {\bf (H1)} and
\begin{equation}\label{H4}
 a\ge 1,\hspace{0.1cm} r > rb\bar{A}_1+\chi_2 l\bar A_2,
\end{equation}
\begin{equation}\label{new-new-Cond-chi-1-2-A}
 { \chi_2 <}\frac{4\sqrt{d\lambda} \big(r - rb\bar{A}_1-\chi_2 l\bar A_2 \big)^{1/2}}{\bar{A}_1k + \bar{A}_2l}.
\end{equation}
}

\medskip
Observe that, when $\chi_1=\chi_2=0$, the assumption
{\bf (H3)} becomes
$$
0<a<1\le b,\quad r>0,
$$
which includes  the weak-strong competition case, and the assumption {\bf (H4)} becomes
$$
0<b<1\le a,\quad r>0,
$$
which includes the strong-weak competition case. Assuming {\bf (H3)} or {\bf (H4)}, we will prove that  the spreading and vanishing dichotomy in the weak sense occurs (see Theorem \ref{free-boundary-thm4}).

 For given { $d_0>0$ and $a_0>0$}, let $\lambda_p{(d_0,a_0,l)}$ be the principal eigenvalue  of
 \begin{equation}\label{Eq-PLE1}
 \begin{cases}
d_0 \phi_{xx}+a_0\phi=\lambda \phi,\quad 0<x<l\cr
 \phi_x(0)=\phi(l)=0.
 \end{cases}
 \end{equation}
  It is not difficult to see that $\lambda_p(d_0,a_0,l)=-\frac{ d_0^2 \pi^2}{4l^2} + a_0$ and $\phi(x) = \cos\frac{\pi x}{2l}$.  Therefore,
  $$
  \lim_{l\to 0^+}\lambda_p(d_0,a_0,l) = -\infty\quad {\rm  and}\quad
  \lim_{l\to \infty}\lambda_p(d_0,a_0,l) = a_0,
   $$
   and there exists an unique $l_{d_0,a_0}^*$ such that $\lambda_{p}(d_0,a_0,l)>0$ for $ l>l_{d_0,a_0}^*$ and $\lambda_{p}(d_0,a_0,l_{d_0,a_0}^*)=0$.

\subsection{Main results}

  In this subsection, we  state the main results of the current paper. The first theorem is on the global existence of nonnegative solutions of \eqref{one-free-boundary-eq-0}.

\begin{thm}\label{free-boundary-thm1} {\rm (Global existence).}   If {\bf (H1)} holds, then for  any $h_0$, $u_0(\cdot), v_0(\cdot)$  satisfying \eqref{Initial-uv},
   \eqref{one-free-boundary-eq-0} has a unique globally defined solution $(u(t,x;u_0,v_0,h_0)$, $v(t,x;u_0,v_0,h_0)$,
   $w(t,x;u_0,v_0,h_0)$, $h(t;u_0,v_0,h_0))$
with $u(0,x;u_0,v_0,h_0)=u_0(x)$, $v(0,x;u_0,v_0,h_0)=v_0(x)$
and $h(0;u_0,v_0,h_0)=h_0$. Moreover, there is $\widetilde M>0$ such that
\begin{equation}\label{thm1-h-bound1}
{ 0\leq h'(t) \leq \widetilde M},
\end{equation}
 \begin{eqnarray}
0\le u(t,x;u_0,v_0,h_0)\le  \max\{\|u_0\|_{C([0,h_0])}, \bar{A}_1\}\quad \forall \,\, t\in [0,\infty), \,\, x\in [0,h(t;u_0,v_0,h_0)),\label{thm1-eq1}
\\
\nonumber\\
0\le v(t,x;u_0,v_0,h_0)\le  \max\{\|v_0\|_{C([0,h_0])}, \bar{A}_2\}\quad \forall \,\, t\in [0,\infty), \,\, x\in [0,h(t;u_0,v_0,h_0)),\label{thm1-eq2}
\end{eqnarray}
and
\begin{eqnarray}
 \limsup_{t\to\infty} {\|u(t,\cdot;u_0,v_0,h_0)\|_{C([0,h(t;u_0,v_0,h_0)])}}\le \bar{A}_1,\label{thm1-eq3}
 \\
 \limsup_{t\to\infty} { \|v(t,x;u_0,v_0,h_0)\|_{C([0,h(t;u_0,v_0,h_0)])}}\le \bar{A}_2.\label{thm1-eq4}
\end{eqnarray}
\end{thm}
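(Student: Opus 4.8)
\medskip
\noindent\emph{Proof strategy.}
The plan is to run the standard program for Stefan-type free boundary problems (cf.\ \cite{Guo2012on}, \cite{BaoShen1}, \cite{BaoShen2}), the one genuinely new point being the control of the chemotaxis cross-terms, for which hypothesis {\bf (H1)} is tailor-made. First I would establish local well-posedness. Straightening the front by $y=x/h(t)$ turns \eqref{one-free-boundary-eq-0} into a uniformly parabolic system for $(u,v)$ on the fixed cylinder $[0,1]$ with coefficients depending on $h(t),h'(t)$, coupled to the scalar ODE $h'(t)=-\tfrac{\mu_1}{h(t)}u_y(t,1)-\tfrac{\mu_2}{h(t)}v_y(t,1)$ and, at each time $t$, to the linear two-point problem $-w_{yy}+\lambda h(t)^2 w=h(t)^2(ku+lv)$ on $(0,1)$ with $w_y(t,0)=w_y(t,1)=0$. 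Since $\lambda>0$ the latter is uniquely solvable, so $w=w[u,v,h]$ is a nonlocal smoothing operator of $(u,v)$, and the maximum principle together with elliptic estimates yield $w\ge 0$ and $\|w(t,\cdot)\|_{C^1}\le C(k,l,\lambda)\big(\|u(t,\cdot)\|_\infty+\|v(t,\cdot)\|_\infty\big)$ with $C$ independent of $h(t)$. A contraction mapping in a parabolic Hölder space for $(u,v)$ together with $h\in C^{1+\alpha/2}$, using parabolic Schauder estimates up to the boundary and the compatibility conditions in \eqref{Initial-uv}, then produces a unique classical solution on a maximal interval $[0,T_{\max})$. After expanding, the $u$- and $v$-equations have the form $u_t=u_{xx}+b_1u_x+c_1u$, $v_t=dv_{xx}+b_2v_x+c_2v$ with bounded coefficients, so $u,v\ge 0$ by the maximum principle, and $u(t,h(t))=v(t,h(t))=0$ with $u,v\ge 0$ forces $h'(t)\ge 0$.

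The heart of the argument is the a priori upper bound, obtained by eliminating $w_{xx}$ with the elliptic equation. Writing $(uw_x)_x=u_xw_x+uw_{xx}$ and substituting $w_{xx}=\lambda w-ku-lv$, the first two equations of \eqref{one-free-boundary-eq-0} become
\[
u_t=u_{xx}-\chi_1 w_x\,u_x+u\big[\,1-(1-k\chi_1)u-(a-l\chi_1)v-\chi_1\lambda w\,\big],
\]
\[
v_t=dv_{xx}-\chi_2 w_x\,v_x+v\big[\,r-(r-l\chi_2)v-(rb-k\chi_2)u-\chi_2\lambda w\,\big].
\]
Under {\bf (H1)} the coefficients $1-k\chi_1$, $a-l\chi_1$, $rb-k\chi_2$, $r-l\chi_2$ are all positive, so, using $u,v,w\ge 0$, dropping the nonpositive terms containing $v,w$ (resp.\ $u,w$) gives
\[
u_t\le u_{xx}-\chi_1 w_x u_x+u\big(1-(1-k\chi_1)u\big),\qquad v_t\le dv_{xx}-\chi_2 w_x v_x+v\big(r-(r-l\chi_2)v\big)
\]
on $(0,h(t))$, with $u=v=0$ at $x=h(t)$ and Neumann data at $x=0$. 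As $w_x$ is bounded, the parabolic comparison principle on the moving domain applies; comparing with the spatially homogeneous logistic ODEs $U'=U(1-(1-k\chi_1)U)$ and $V'=V(r-(r-l\chi_2)V)$, whose positive equilibria are $\bar A_1$ and $\bar A_2$, yields \eqref{thm1-eq1}--\eqref{thm1-eq2}; since every positive solution of those ODEs converges to its equilibrium, letting $t\to\infty$ gives \eqref{thm1-eq3}--\eqref{thm1-eq4}. All of this is carried out in the straightened variables, where the spatial domain is fixed.

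It remains to bound $-u_x(t,h(t))$ and $-v_x(t,h(t))$ from above. I would use the classical barrier argument: fixing $K$ large depending only on the (uniform) $L^\infty$ bounds for $u,v,w,w_x$, on $\Omega_K=\{(t,x):h(t)-K^{-1}<x<h(t),\ t>0\}$ one checks that $z(t,x)=M\big(2K(h(t)-x)-K^2(h(t)-x)^2\big)$ with a suitable $M$ (depending also on $\|u_0\|_{C^1}$) is a supersolution of the linear parabolic inequality obeyed by $u$ in $\Omega_K$, the drift $\chi_1 w_x$ being absorbed by the uniform bound on $w_x$ once $K$ is large; hence $u\le z$ in $\Omega_K$, and since $u=z=0$ at $x=h(t)$ this gives $u_x(t,h(t))\ge z_x(t,h(t))=-2MK$, so $-u_x(t,h(t))\le 2MK$, and likewise for $v$, which yields \eqref{thm1-h-bound1}. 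Global existence then follows from the usual continuation argument: on $[0,T_{\max})$ we have $\|u\|_\infty,\|v\|_\infty$ bounded and $h_0\le h(t)\le h_0+\widetilde M t$, whence uniform Hölder bounds for $(u,v,w,h)$ up to any finite time by parabolic and elliptic Schauder theory; were $T_{\max}<\infty$ the solution could be extended beyond $T_{\max}$, contradicting maximality, so $T_{\max}=\infty$.

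The step I expect to be the main obstacle is this uniform-in-time gradient estimate at the free boundary --- which is what makes $h'$ globally bounded: the barrier must dominate the chemotactic transport $\chi_i w_x\partial_x$, which requires controlling $w$ in $C^1$ \emph{uniformly} in the length $h(t)$ of the interval --- a bound that is available precisely because the $w$-equation is elliptic with $\lambda>0$, so its Green's function decays exponentially. A secondary and more routine subtlety is the legitimacy of the comparison arguments on the moving domain, which is why everything is done after straightening the front.
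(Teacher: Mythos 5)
Your proposal follows essentially the same route as the paper: local existence by straightening the front and a contraction mapping (the paper uses the Chen--Friedman cutoff diffeomorphism $x=y+\zeta(y)(h(t)-h_0)$ rather than $y=x/h(t)$, an immaterial difference), the a priori bounds by expanding $(uw_x)_x$ and substituting $w_{xx}=\lambda w-ku-lv$ so that {\bf (H1)} lets you drop the nonpositive terms and compare with the logistic ODEs having equilibria $\bar A_1,\bar A_2$, the same quadratic barrier $C_0[2M_1(h(t)-x)-M_1^2(h(t)-x)^2]$ near the free boundary to bound $h'$, and the standard continuation argument. This matches the paper's Lemmas 3.1--3.3 and their assembly, so no further comparison is needed.
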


The next three  theorems are on the spreading and vanishing dichotomy behaviors of \eqref{one-free-boundary-eq-0}.

\begin{thm}\label{free-boundary-thm2} {\rm (Spreading and vanishing dichotomy in the generalized sense).} Assume that {\bf(H1)} holds.  Then for   any given $h_0>0$ and  $u_0(\cdot),v_0(\cdot)$ satisfying
\eqref{Initial-uv},  one of the following  holds.
\begin{itemize}

\item[(1)] The two species vanish eventually
and $h_{\infty}(u_0,v_0,h_0) \leq l^*=\min\{l_{1,1}^*,l_{d,r}^*\}$.

\item[(2)] {There is $\underline{A}>0$ independent of $u_0,v_0,h_0$ such that, if $h_\infty(u_0,v_0,h_0)=\infty$,
then for any $m>0$,
\begin{equation}
\label{new-spreading}
\liminf_{t\to\infty} \inf_{0\le x\le m}[ku(t,x;u_0,v_0,h_0)+lv(t,x;u_0,v_0,h_0)]\ge \underline{A}.
\end{equation}}
\end{itemize}
\end{thm}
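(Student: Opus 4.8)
The plan is to split according to the value of $h_\infty=h_\infty(u_0,v_0,h_0)$, which exists in $(0,\infty]$ since $h'(t)\ge0$: the case $h_\infty<\infty$ will yield alternative (1) and the case $h_\infty=\infty$ alternative (2). When $h_\infty<\infty$ one has $\int_0^\infty h'\,dt<\infty$; straightening the domain and using parabolic Schauder estimates makes $h'$ uniformly continuous, so $h'(t)\to0$, and since $h'=-\mu_1u_x(t,h(t))-\mu_2v_x(t,h(t))$ with both terms $\le0$, also $u_x(t,h(t))\to0$ and $v_x(t,h(t))\to0$. To get vanishing I would argue by contradiction: if, say, $u(t_k,x_k)\ge\delta>0$ with $t_k\to\infty$ and $x_k\in[0,h(t_k)]\subset[0,h_\infty]$, then translating $(u,v,w)(\cdot+t_k,\cdot)$ and invoking Theorem~\ref{free-boundary-thm1} together with uniform parabolic estimates (up to the straightened boundary) produces a limiting entire solution $(u^\infty,v^\infty,w^\infty)$ on $(0,h_\infty)\times\mathbb R$ with $u^\infty(0,x^*)\ge\delta$ and $u^\infty(t,h_\infty)=u^\infty_x(t,h_\infty)=0$ for all $t$; viewing the $u^\infty$-equation as a linear parabolic equation with bounded coefficients, the strong maximum principle and the Hopf boundary lemma at $x=h_\infty$ force $u^\infty\equiv0$ — a contradiction. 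Hence $\|u(t,\cdot)\|_{C([0,h(t)])}\to0$, and similarly $\|v(t,\cdot)\|_{C([0,h(t)])}\to0$. For the bound $h_\infty\le l^*=\min\{l_{1,1}^*,l_{d,r}^*\}$, suppose $h_\infty>l_{1,1}^*$; fix $l_{1,1}^*<\ell<h_\infty$ and $T_0$ with $h(t)\ge\ell$ for $t\ge T_0$. Eliminating $w_{xx}$ from the third equation of \eqref{one-free-boundary-eq-0} rewrites the $u$-equation as $u_t=u_{xx}-\chi_1w_xu_x+u[1-\chi_1\lambda w-(1-\chi_1k)u-(a-\chi_1l)v]$; since $\|w\|_{C^1}\lesssim\|u\|_\infty+\|v\|_\infty\to0$ and $u,v\to0$, for every $\eta>0$ there is $T_\eta$ with $u_t\ge u_{xx}-\chi_1w_xu_x+(1-\eta)u$ on $(0,\ell)$ for $t\ge T_\eta$; as $\eta\to0$ and $\|w_x\|_\infty\to0$ the principal eigenvalue of $\phi\mapsto\phi_{xx}-\chi_1w_x\phi_x+(1-\eta)\phi$ on $(0,\ell)$ with $\phi_x(0)=\phi(\ell)=0$ converges to $\lambda_p(1,1,\ell)>0$, so is positive for $\eta$ small, and a comparison then forces $\|u(t,\cdot)\|_{C([0,\ell])}\to\infty$, contradicting vanishing; thus $h_\infty\le l_{1,1}^*$, and the same argument with the $v$-equation gives $h_\infty\le l_{d,r}^*$.

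For the case $h_\infty=\infty$ I would first write the reaction--drift form of the equations obtained by substituting $w_{xx}=\lambda w-ku-lv$,
\[
u_t=u_{xx}-\chi_1w_xu_x+u[1-\chi_1\lambda w-(1-\chi_1k)u-(a-\chi_1l)v],
\]
\[
v_t=dv_{xx}-\chi_2w_xv_x+v[r-\chi_2\lambda w-(rb-\chi_2k)u-(r-\chi_2l)v],
\]
in which {\bf (H1)} makes all of $1-\chi_1k,\ a-\chi_1l,\ rb-\chi_2k,\ r-\chi_2l$ positive, and record the a priori bounds $0\le w\le\bar\rho/\lambda$, $\|w_x\|_\infty\le\bar\rho/\sqrt\lambda$ ($\bar\rho:=k\bar A_1+l\bar A_2$), from the Green's function of $-\partial_{xx}+\lambda$ whose kernel decays like $e^{-\sqrt\lambda|x-y|}$. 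The feature I would exploit is the locality that $\lambda>0$ provides: if $ku+lv\le\eta$ on $[0,\ell_1]$ then $\|w\|_{C^1([0,\ell_0])}\le C(\eta+\bar\rho\,e^{-\sqrt\lambda(\ell_1-\ell_0)})$, so near $x=0$ the chemical barely feels the far field. Using this I would prove a localized uniform persistence estimate: there exist $\ell_0,\eta_0>0$, depending only on the parameters, with $\liminf_{t\to\infty}\inf_{x\in[0,\ell_0]}(ku+lv)(t,x)\ge\eta_0$ whenever $h_\infty=\infty$. The mechanism is that $(0,0,0)$ is locally repelling near $x=0$: choosing $\ell_0$ large and $\eta_0$ small (depending only on the parameters), whenever $ku+lv\le2\eta_0$ on $[0,\ell_0]$ one has on $[0,\ell_0/2]$ both $u_t\ge u_{xx}-\chi_1w_xu_x+\tfrac12u$ and $v_t\ge dv_{xx}-\chi_2w_xv_x+\tfrac r2v$ with $\|w_x\|$ small there, and the principal eigenvalue of $\phi\mapsto\phi_{xx}-\chi_1w_x\phi_x+\tfrac12\phi$ (resp. $d\phi_{xx}-\chi_2w_x\phi_x+\tfrac r2\phi$) on $(0,\ell_0/2)$ with $\phi_x(0)=\phi(\ell_0/2)=0$ is at least $\sigma_*>0$ — which I would read off from the self-adjoint form $u_{xx}-\chi_1w_xu_x=e^{\chi_1w}(e^{-\chi_1w}u_x)_x$ and the Rayleigh quotient, the bounded weight costing only a constant. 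Since $h(t)\to\infty$, for $t$ large $u(t,\ell_0/2)>0$ and the cylinder $[0,\ell_0]\times[t-1,t]$ lies well inside the domain; a comparison argument — run with a time-dependent principal eigenfunction to absorb the drift $-\chi_iw_x(t,\cdot)$ — together with Theorem~\ref{free-boundary-thm1}, which forbids $\|u(t,\cdot)\|$ from blowing up, shows that $ku+lv$ cannot stay $\le2\eta_0$ on $[0,\ell_0]$ for a time interval longer than some bound independent of the data; and the parabolic Harnack inequality (using even reflection across the Neumann boundary $x=0$) turns the resulting recurrent sup-bounds into the asserted inf-bound.

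Finally, applying the localized persistence estimate with $\ell_0$ replaced by $\max\{2m,\ell_0\}$ gives $\liminf_{t\to\infty}\inf_{x\in[0,m]}(ku+lv)(t,x)\ge\underline A$ for a constant $\underline A$ depending only on the parameters (in particular not on $u_0,v_0,h_0$), which is \eqref{new-spreading}. I expect the hard part to be exactly this localization in the spreading case — upgrading ``$ku+lv$ is not small somewhere in the ever-growing domain'' to ``$ku+lv$ is uniformly bounded below on each fixed $[0,m]$''. A priori the population could be transported outward near the free boundary $x=h(t)\to\infty$, leaving the core depleted; what forbids this is precisely that $\lambda>0$ makes $w$ a local functional of $ku+lv$, so that near $x=0$ the trivial state is linearly unstable with rate close to $1$ (resp. $r$) regardless of the far field, forcing regrowth. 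A secondary, more routine difficulty — already present in the vanishing case — is that the chemotactic drift $-\chi_iw_x(t,\cdot)$ is genuinely time dependent, so the eigenvalue and comparison steps must be carried out with a moving principal eigenfunction, which is handled via the self-adjoint form and the a priori bounds (or by quoting the comparison machinery established earlier in the paper).
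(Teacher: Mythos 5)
Your treatment of the case $h_\infty<\infty$ matches the paper's: $h'(t)\to 0$ by integrability plus equicontinuity of $h'$, vanishing by extracting an entire limit solution and contradicting the Hopf lemma at $x=h_\infty$, and the bound $h_\infty\le l^*$ by comparison with a Fisher--KPP problem on $(0,\ell)$ whose principal eigenvalue stays positive once $u,v,w$, and hence the drift $-\chi_1 w_x$, are small (the paper packages that comparison as Lemma \ref{fisher-kpp-lm1}). For the case $h_\infty=\infty$ you have also identified the paper's key mechanism: because $\lambda>0$, $w$ is a local functional of $ku+lv$ (Lemmas \ref{thm2-lm1} and \ref{thm2-lm2}), so the trivial state is linearly repelling on windows where $ku+lv$ is small, and recurrence plus positivity propagation yields a persistent lower bound.

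The genuine gap is in your final globalization step. The theorem asserts a single $\underline A>0$, chosen before $m$, that works for every $m>0$ and every admissible $(u_0,v_0,h_0)$; this uniformity is what is actually used later (e.g.\ to conclude $\inf_{t,x}u^*>0$ for entire limit solutions in the proofs of Theorems \ref{free-boundary-thm3} and \ref{free-boundary-thm4}). You obtain the bound on $[0,m]$ by rerunning the localized persistence argument on the single enlarged window $[0,\max\{2m,\ell_0\}]$, but the constants that argument produces degrade with the window length: the recurrence step only guarantees that $ku+lv$ exceeds $2\eta_0$ \emph{somewhere} in the window within a bounded time, and upgrading that to an infimum over all of $[0,m]$ via the parabolic Harnack inequality requires a Harnack chain of length comparable to $m$, whose constant decays geometrically in $m$. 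As written you would get $\underline A=\underline A(m)\to 0$, which proves spreading in the generalized sense but not the stated uniform bound. The paper avoids this by tiling $[0,m]$ with finitely many windows of a \emph{fixed} size $R^*$ centered at $x_n=nR^*$ and running an interior (Dirichlet) version of the persistence argument on each window; the interior estimate \eqref{aux-thm3-eq1} controls the drift and the zeroth-order perturbation from $w$ on each window by the local size of $ku+lv$ plus an error $\varepsilon_R$ uniform in the window's position, so the resulting constants $\delta_\epsilon,\tilde\delta_\epsilon$, and hence $\underline A=\min\{\delta_\epsilon,\tilde\delta_\epsilon\}$, are independent of $m$, of the window, and of the (normalized) initial data. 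Your argument needs this fixed-scale tiling (or an equivalent device) to close.
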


\begin{thm}\label{free-boundary-thm3} {\rm (Spreading and vanishing dichotomy in the strong sense).}
Assume that {\bf (H2)} holds. Then for   any given $h_0>0$ and  $u_0(\cdot),v_0(\cdot)$ satisfying
\eqref{Initial-uv},  one of (1) and (2) holds.
\begin{itemize}
\item[(1)] The two species vanish eventually
and $h_{\infty}(u_0,v_0,h_0) \leq l^*={ \min\{l_{1,1}^*,l_{d,r}^*\}}$

\item[(2)] There are $\underline{A}_1,\underline{A}_2>0$ independent of $u_0,v_0,h_0$ such that, if $h_\infty(u_0,v_0,h_0)=\infty$,
then for any $m>0$
\begin{equation*}
 \lim \inf_{t\rightarrow +\infty}\inf_{0\leq x\leq m} u(t,x){\ge \underline{A}_1} \hspace{0.5cm}\mbox{and}\hspace{0.5cm} \lim \inf_{t\rightarrow +\infty}\inf_{0\leq x\leq m} v(t,x) {\ge \underline{A}_2}.
\end{equation*}
\end{itemize}
Moreover, the following  holds
\begin{itemize}
\item[(3)] If $h_\infty(u_0,v_0,h_0)=\infty$, then for any $m>0$
\begin{equation}\label{asymp-be}
 \lim_{t\rightarrow\infty}\sup_{0\leq x\leq m}|u(t,x;u_0,v_0,h_0) -u^*| + \lim_{t\rightarrow\infty}\sup_{0\leq x\leq m}|v(t,x;u_0,v_0,h_0) -v^*| =0,
\end{equation}
where $(u^*,v^*) = (\frac{1-a}{1-ab}, \frac{1-b}{1-ab})$.
\end{itemize}
\end{thm}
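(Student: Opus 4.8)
\medskip
\noindent\emph{Sketch of proof.}~ The plan is to prove (1), (2), (3) in that order: (1) from the generalized dichotomy of Theorem~\ref{free-boundary-thm2}; (2) by comparing the $u$- and $v$-components with auxiliary single-species free-boundary problems of logistic type carrying a chemotaxis drift; and (3) by the classical weak-competition squeezing, with the chemotaxis terms disappearing in the limit. First, since {\bf (H2)} implies {\bf (H1)}, Theorem~\ref{free-boundary-thm2} applies: if $h_\infty(u_0,v_0,h_0)<\infty$ its first alternative gives both the eventual vanishing of the two species and $h_\infty\le l^*$, which is precisely (1); so it remains to treat $h_\infty(u_0,v_0,h_0)=\infty$. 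Note also that under {\bf (H1)} one has $\bar A_1,\bar A_2>1$, so the first line of \eqref{H2} forces $a\le a\bar A_2<1$ and $b\le b\bar A_1<1$: we are in the weak competition regime and $(u^*,v^*)=(\frac{1-a}{1-ab},\frac{1-b}{1-ab})$ is the unique positive equilibrium of \eqref{ode-eq}, with $u^*,v^*>0$, so the target of (3) is meaningful.

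For (2), fix $\eta>0$ small. By \eqref{thm1-eq3}--\eqref{thm1-eq4} there is $T_\eta$ with $u\le\bar A_1+\eta$, $v\le\bar A_2+\eta$ on $[0,h(t)]$ for $t\ge T_\eta$; the third equation of \eqref{one-free-boundary-eq-0}, with $w_x(t,0)=w_x(t,h(t))=0$ and $0\le ku+lv\le M_\eta:=k(\bar A_1+\eta)+l(\bar A_2+\eta)$, then gives $0\le w\le M_\eta/\lambda$ and, via the Green's-function representation, $\|w_x(t,\cdot)\|_\infty\le C_\lambda M_\eta$ with $C_\lambda\sim\lambda^{-1/2}$. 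Substituting $w_{xx}=\lambda w-ku-lv$ rewrites the first equation as
\[
u_t=u_{xx}-\chi_1 w_x u_x+u\big[1-(1-\chi_1 k)u-(a-\chi_1 l)v-\chi_1\lambda w\big],
\]
with $1-\chi_1 k>0$, $a-\chi_1 l>0$ by {\bf (H1)}; using $v\le\bar A_2+\eta$, $w\le M_\eta/\lambda$, for $t\ge T_\eta$ and on any $[0,L]\subset[0,h(t)]$,
\[
u_t\ge u_{xx}-\chi_1 w_x u_x+u\big[\rho_\eta-(1-\chi_1 k)u\big],\qquad
\rho_\eta:=1-(a-\chi_1 l)(\bar A_2+\eta)-\chi_1 M_\eta,
\]
and a short computation gives $\rho_\eta\to 1-a\bar A_2-\chi_1 k\bar A_1>0$ as $\eta\to0$, by \eqref{H2}. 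Writing the drift in divergence form $u_{xx}-\chi_1 w_x u_x=e^{\chi_1 w}(e^{-\chi_1 w}u_x)_x$, completing the square to absorb it and using the bound on $\|w_x\|_\infty$, the first inequality in \eqref{Cond-chi-1-2-A} (with $\eta$ small) is exactly what guarantees that the auxiliary problem $p_t=p_{xx}-\chi_1(pw_x)_x+p(\rho_\eta-(1-\chi_1 k)p)$ on $[0,L]$ (Neumann at $0$, Dirichlet at $L$) has a positive principal eigenvalue of its linearization at $p=0$ for $L$ large, hence a positive attracting steady state. Choosing such an $L$, then $T\ge T_\eta$ with $h(t)>L$ for $t\ge T$, and comparing $u$ from below with the auxiliary solution gives $\liminf_{t\to\infty}\inf_{0\le x\le L}u\ge\underline A_1>0$ with $\underline A_1$ independent of the data; since $L$ is arbitrary and $\underline A_1$ may be chosen non-increasing in $L$, this proves the lower bound for $u$ on every $[0,m]$. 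The bound for $v$ follows symmetrically from $v_t=dv_{xx}-\chi_2 w_x v_x+v[r-(rb-\chi_2 k)u-(r-\chi_2 l)v-\chi_2\lambda w]$, using $u\le\bar A_1+\eta$, the constant $d$, and the second inequalities in \eqref{H2} and \eqref{Cond-chi-1-2-A}.

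For (3), assume $h_\infty=\infty$. By Theorem~\ref{free-boundary-thm1} and part (2), for each $m>0$ there are $0<\underline A_i\le\bar A_i$ ($i=1,2$) with $\liminf_{t\to\infty}\inf_{[0,m]}u\ge\underline A_1$, $\liminf_{t\to\infty}\inf_{[0,m]}v\ge\underline A_2$, $\limsup_{t\to\infty}\sup_{[0,m]}u\le\bar A_1$, $\limsup_{t\to\infty}\sup_{[0,m]}v\le\bar A_2$. Now run the weak-competition squeezing: set $\overline u_0=\bar A_1$, $\underline u_0=\underline A_1$, $\overline v_0=\bar A_2$, $\underline v_0=\underline A_2$ and
\[
\overline u_{n+1}=1-a\underline v_n,\quad \underline u_{n+1}=1-a\overline v_n,\quad \overline v_{n+1}=1-b\underline u_n,\quad \underline v_{n+1}=1-b\overline u_n;
\]
since $0<a,b<1$ these stay positive and, after the first step, decrease resp.\ increase monotonically to $u^*$ resp.\ $v^*$, the only ordered fixed point. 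The inductive step is a comparison on an expanding interval $[0,L]\subset(0,h(t))$: once $u\le\overline u_n+\eta$ and $v\ge\underline v_n-\eta$ hold locally uniformly for $t$ large, the $u$-equation compared with the logistic problem of growth rate $1-a(\underline v_n-\eta)$ --- the residual drift $-\chi_1(uw_x)_x$ being controlled by $\|w_x\|_\infty\to0$ as the oscillation of $ku+lv$ (hence of $w$) tends to $0$, i.e.\ as $n\to\infty$ --- traps $u$ between barriers converging to $\underline u_{n+1}$ and $\overline u_{n+1}$ locally uniformly; symmetrically for $v$, using $v\le\overline v_n+\eta$, $u\ge\underline u_n-\eta$. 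Letting $n\to\infty$ and then $\eta\to0$ gives \eqref{asymp-be}; the chemotaxis contribution vanishes in the limit because for constant states $\chi_1(ku+lv-\lambda w)=-\chi_1 w_{xx}=0$, so the stationary equations reduce exactly to $U(1-U-aV)=0$ and $V(r-rbU-rV)=0$.

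The main obstacle is part (2): turning the comparison into a quantitative statement for which \eqref{Cond-chi-1-2-A} is exactly the sufficient hypothesis. This requires the sharp elliptic estimates for $w$ and $w_x$ on the moving domain $[0,h(t)]$, careful bookkeeping of the cross term $-\chi_1 u w_{xx}=\chi_1 k u^2+\chi_1 l uv-\chi_1\lambda uw$ (so that it only reinforces the logistic damping $-(1-\chi_1 k)u$ and contributes the controlled loss $-\chi_1\lambda uw\ge-\chi_1 M_\eta u$), and handling the time-dependence of the drift coefficient $\chi_1 w_x$ in the comparison (e.g.\ via a uniform-persistence argument or a $t$-uniform lower bound on the linearized principal eigenvalue). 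A secondary, pervasive technicality is that all lower bounds are only locally uniform in $x$ while $h(t)\to\infty$, so each sub/supersolution must be built on a fixed interval $[0,L]\subset(0,h(t))$ with the constants tracked as $L\to\infty$.
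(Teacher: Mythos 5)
Parts (1) and (2) of your proposal follow the paper's route. (1) is indeed just Theorem \ref{free-boundary-thm2}(1). For (2) the paper does exactly what you describe: it bounds $\chi_1 w_x$ by $\frac{\chi_1}{2\sqrt\lambda}(k\tilde A_1+l\tilde A_2)$ (Lemma \ref{thm2-lm1}), bounds the zeroth-order coefficient below by $1-a\tilde A_2-\chi_1 k\tilde A_1$ (Lemma \ref{apriori-estimate-lm1}), and compares with logistic-with-drift problems whose principal eigenvalue is positive precisely because of \eqref{Cond-chi-1-2-A} (Lemmas \ref{fisher-kpp-lm1} and \ref{fisher-kpp-lm2}). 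One small point: the paper covers $[0,m]$ by finitely many windows of a \emph{fixed} length $3l_0^*$ resp. $4l_0^*$ (Neumann--Dirichlet for the first, Dirichlet--Dirichlet for the interior ones), which is what makes $\underline A_1=\min\{\epsilon_1^*,\epsilon_2^*\}$ independent of $m$; your single interval $[0,L]$ with $L\to\infty$ gives a constant that a priori depends on $L$ (and your Lemma-\ref{fisher-kpp-lm1}-type conclusion only controls $[0,4L/5]$, not all of $[0,L]$). This is fixable but worth noting since the theorem quantifies $\underline A_1,\underline A_2$ before $m$.

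Part (3) is where you diverge from the paper, and your version has a genuine gap. The paper does \emph{not} run the squeezing on the time-dependent free-boundary solution; it extracts, along $t_n\to\infty$, a positive entire solution of the half-line system \eqref{one-free-boundary-sp} with $\inf_{t,x}u^*>0$, $\inf_{t,x}v^*>0$ (using part (2)), and then invokes Lemma \ref{Stability-lem}, which classifies such entire solutions as the constant $(u^*,v^*)$. Your direct iteration $\overline u_{n+1}=1-a\underline v_n$, $\overline v_{n+1}=1-b\underline u_n$ is the wrong map for this system: writing $-\chi_1(uw_x)_x=-\chi_1u_xw_x+\chi_1u(ku+lv-\lambda w)$ and comparing with constant barriers, the inequalities one actually obtains are those of Lemma \ref{Stability-lem}, namely $L_1\le\bar A_1(1-\chi_1kl_1-al_2)$, $l_1\ge\bar A_1(1-\chi_1kL_1-aL_2)$, etc.; the chemotaxis corrections $\chi_1 k$, $\bar A_1$ enter the iteration from the first step and do not ``vanish as the oscillation of $ku+lv$ tends to $0$'' --- that justification is circular, since the oscillation going to zero is precisely what the iteration is supposed to prove, and at the early steps the oscillation (hence $\lambda w-ku-lv$ and $w_x$) is not small. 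The corrected iteration contracts not because $ab<1$ but because of the product inequality $(1-\chi_1k\bar A_1)(r-\chi_2l\bar A_2)>abr\bar A_1\bar A_2$, which is what {\bf (H2)} delivers; its fixed point is still $(u^*,v^*)$ because $\bar A_1(1-\chi_1kL_1-aL_2)=L_1$ reduces to $L_1=1-aL_2$. Finally, running the squeezing directly on $[0,h(t)]$ forces you to rebuild interval sub/supersolutions at every step of the iteration because constant subsolutions violate the Dirichlet condition at $x=h(t)$; passing first to the entire solution on the half line, as the paper does, removes this difficulty entirely. With these corrections your scheme would work, but as written step (3) does not close.
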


\begin{thm}\label{free-boundary-thm4}  {\rm (Spreading and vanishing dichotomy in the weak sense).}
\begin{itemize}
\item[(1)]
Assume that {\bf(H3)} holds. Then  for   any given $h_0>0$ and  $u_0(\cdot),v_0(\cdot)$ satisfying
\eqref{Initial-uv}, if $h_\infty(u_0,v_0,h_0) = \infty$, then for any $m>0$
\begin{equation}\label{asymp-be1}
 \lim_{t\rightarrow\infty}\sup_{0\leq x\leq m}|u(t,x;u_0,v_0,h_0) -1| + \lim_{t\rightarrow\infty}\sup_{0\leq x\leq m} v(t,x;u_0,v_0,h_0) =0.
\end{equation}

\item[(2)] Assume that {\bf(H4)} holds. Then  for   any given $h_0>0$ and  $u_0(\cdot),v_0(\cdot)$ satisfying
\eqref{Initial-uv}, if $h_\infty(u_0,v_0,h_0) = \infty$, then for any $m>0$
\begin{equation}\label{asymp-be2}
 \lim_{t\rightarrow\infty}\sup_{0\leq x\leq m}u(t,x;u_0,v_0,h_0) + \lim_{t\rightarrow\infty}\sup_{0\leq x\leq m} |v(t,x;u_0,v_0,h_0)-1| =0.
\end{equation}
\end{itemize}
\end{thm}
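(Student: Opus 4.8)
\medskip\noindent{\it Sketch of proof.}~ Part (2) is obtained from part (1) by interchanging the two species and relabelling the coefficients accordingly — an operation that carries {\bf(H4)} into the exact analogue of {\bf(H3)} and \eqref{asymp-be2} into the analogue of \eqref{asymp-be1} (the argument below never uses that the diffusion rate of $u$ equals $1$). So assume {\bf(H3)} and $h_\infty(u_0,v_0,h_0)=\infty$; the goal is to show $v\to0$ and $u\to1$ locally uniformly on $[0,\infty)$.

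First I would put the first two equations of \eqref{one-free-boundary-eq-0} into the ``effective'' competitive form obtained by eliminating $w_{xx}$: $u$ and $v$ solve reaction--advection--diffusion equations with Lotka--Volterra nonlinearities of coefficients $1-\chi_1k$, $a-\chi_1l$, $r-\chi_2l$, $rb-\chi_2k$ (all positive by {\bf(H1)}), a drift $-\chi_i w_x\partial_x$, and a sign-definite lower-order term $-\chi_i\lambda w\,(\cdot)$. From the elliptic $w$-equation and its Green function one gets $0\le w\le(k\|u\|_\infty+l\|v\|_\infty)/\lambda$, $\|w_x\|_\infty\le(k\|u\|_\infty+l\|v\|_\infty)/(2\sqrt\lambda)$, and $\lambda w\ge k\inf u+l\inf v$ on any fixed compact set up to a term vanishing as the set grows; Theorem \ref{free-boundary-thm1} gives $\limsup_t\|u(t,\cdot)\|_\infty\le\bar A_1$ and $\limsup_t\|v(t,\cdot)\|_\infty\le\bar A_2$. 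Standard parabolic estimates together with $h(t)\to\infty$ let all the asymptotics below be read off on compact $x$-intervals.

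The core is a monotone ``squeezing'' iteration, built on two coupled comparison statements valid on every fixed $[0,m]$. \emph{(A)} From $\limsup_t\sup_{[0,m]}v\le\bar v$ and $\limsup_t\|u\|_\infty\le\bar u$ one deduces $\liminf_t\inf_{[0,m]}u\ge(1-a\bar v-\chi_1k\bar u)/(1-\chi_1k)$ whenever this is positive, and $\limsup_t\|u\|_\infty\le(1-\chi_1k\,\underline u)/(1-\chi_1k)$ once $\liminf_t\inf_{[0,m]}u\ge\underline u$ for all $m$. \emph{(B)} From $\liminf_t\inf_{[0,m]}u\ge\underline u$ one deduces $\limsup_t\sup_{[0,m]}v\le r(1-b\,\underline u)^+/(r-\chi_2l)$. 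Statement (B) is a scalar comparison: using $-\chi_2\lambda w\le-\chi_2k\,\underline u$ and $-(rb-\chi_2k)u\le-(rb-\chi_2k)\underline u$, the $v$-equation is dominated by $\dot V=V(r-(r-\chi_2l)V-rb\,\underline u)$, and here $b\ge1$ is used decisively. Statement (A) is where the chemotaxis must be absorbed: for $u$ small the effective reaction rate is at least $1-a\bar v-\chi_1k\bar u-\varepsilon$, which is $>0$ for $\varepsilon$ small by \eqref{H3} together with $\bar v\le\bar A_2$, $\bar u\le\bar A_1$; after removing the drift $-\chi_1 w_x\partial_x$ (e.g.\ by the substitution $u=e^{\chi_1 w/2}\tilde u$, at the price of a bounded lower-order correction and a term $-\tfrac14\chi_1^2w_x^2$) the residual zeroth-order rate stays positive precisely because \eqref{new-Cond-chi-1-2-A} encodes $\tfrac14\chi_1^2\|w_x\|_\infty^2<1-a\bar A_2-\chi_1k\bar A_1$; since the principal eigenvalue of the resulting selfadjoint operator on $(0,h(t))$ with Neumann at $0$ and Dirichlet at $h(t)$ tends to that positive rate as $h(t)\to\infty$ (by the formula for $\lambda_p$ after \eqref{Eq-PLE1}), a subsolution built from its principal eigenfunction pushes $\liminf u$ up. Running (A) and (B) from $\bar u^{(0)}=\bar A_1$, $\bar v^{(0)}=\bar A_2$ produces monotone sequences $\underline u^{(n)}\uparrow$, $\bar u^{(n)}\downarrow$, $\bar v^{(n)}\downarrow$; passing to limits they solve $\bar u=\frac{1-\chi_1k\underline u}{1-\chi_1k}$, $\underline u=\frac{1-a\bar v-\chi_1k\bar u}{1-\chi_1k}$, $\bar v=\frac{r(1-b\underline u)^+}{r-\chi_2l}$, and a short computation using $a<1\le b$ and {\bf(H1)} forces $(\bar u,\underline u,\bar v)=(1,1,0)$. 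Hence $u\to1$ and $v\to0$ locally uniformly, i.e.\ \eqref{asymp-be1}; part (2) then follows from {\bf(H4)} by the symmetry above.

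I expect two main obstacles. First, keeping the principal eigenvalue in (A) positive on the large moving domain and handling the degeneracy of the favorable term $-\chi_i\lambda w$ in the boundary layer near $x=h(t)$ (resolved by localizing the comparison to $[0,h(t)-O(1)]$, where $u$ is already bounded below): this is exactly what the explicit thresholds \eqref{new-Cond-chi-1-2-A} and \eqref{new-new-Cond-chi-1-2-A} are engineered to make possible, via the $w$-equation bound on $\|w_x\|_\infty$. Second, the fixed-point analysis of the limiting system, in particular the borderline case $b=1$ of {\bf(H3)}, where one must exclude a spurious positive equilibrium for $\bar v$; should this prove awkward, the final step ``$v\to0\Rightarrow u\to1$'' can instead be completed by invoking the spreading result for the single-species chemotaxis free boundary problem of \cite{BaoShen1}, to which the $(u,w)$-subsystem converges as $v\to0$.
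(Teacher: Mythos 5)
Your strategy is sound and rests on the same two pillars as the paper's proof, but it is organized quite differently. The paper's actual argument for part (1) is short: it first establishes the uniform persistence $\liminf_{t\to\infty}\inf_{[0,m]}u\ge\underline A_1>0$ "by similar arguments to those in Theorem \ref{free-boundary-thm3} (2)" (i.e.\ exactly the principal-eigenvalue/subsolution machinery of Lemmas \ref{fisher-kpp-lm1}--\ref{fisher-kpp-lm2} that your step (A) describes, with \eqref{new-Cond-chi-1-2-A} entering through the bound $\chi_1\|w_x\|_\infty\le\chi_1(k\bar A_1+l\bar A_2)/(2\sqrt\lambda)$ of Lemma \ref{thm2-lm1}, precisely as you say); it then argues by contradiction, extracts via parabolic estimates a positive entire solution of the half-line system \eqref{one-free-boundary-sp} along a time sequence, and invokes Lemma \ref{Stability-lem-1}, which classifies entire solutions with $\inf u^*>0$ as $(1,0,k/\lambda)$. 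Your $\limsup/\liminf$ squeezing is, in substance, the proof of that classification lemma (compare the inequalities \eqref{1-k-1-1}--\eqref{1-k-2-2} in Lemma \ref{Stability-lem} with your relations for $\bar u,\underline u,\bar v$), but you run it directly on the free-boundary solution rather than on the limiting entire solution. What the paper's detour buys is that on the entire solution the sup/inf are global in $(t,x)\in\RR\times[0,\infty)$, so every step of the iteration is a clean spatially homogeneous ODE comparison with Neumann data at $0$ and no Dirichlet boundary in sight; in your version each upper- and lower-bound improvement must be localized to compacts, the nonlocal $w$-terms must be controlled by the Green's-function splitting behind \eqref{thm2-eq1} and Lemma \ref{thm2-lm2}, and the lower-bound steps still require eigenfunction subsolutions on large intervals rather than ODE comparison (since $u(t,h(t))=0$ forces $\inf_{[0,h(t)]}u=0$). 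These are exactly the obstacles you flag, and they are surmountable, so I regard your route as viable but noticeably heavier. Two small points to pin down if you carry it out: the fixed-point system for $(\bar u,\underline u,\bar v)$ has $(1,1,0)$ as its unique solution only because \eqref{H3} forces $\chi_1k\bar A_1<1$ (equivalently $\chi_1k<1/2$), which is the contraction condition for your iteration — this is the same algebra the paper uses in Lemma \ref{Stability-lem} via the product trick $(1-\chi_1k\bar A_1)(r-\chi_2l\bar A_2)>abr\bar A_1\bar A_2$; and the borderline $b=1$ causes no trouble once this is in place. Finally, your symmetry reduction of part (2) to part (1) matches the paper's one-line "(2) can be proved similarly."
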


\subsection{Remarks}

In this subsection, we provide some remarks on the main results of the paper.

\begin{itemize}

\item[1.] Theorem \ref{free-boundary-thm2} is new even in the case $\chi_1=\chi_2=0$.

\item[2.] When $\chi_1=\chi_2=0$,  Theorem \ref{free-boundary-thm3} recovers the \cite[Theorem 4]{Guo2012on} (also \cite[Theorem 2.3]{WaZh}),
and Theorem \ref{free-boundary-thm4}  recovers \cite[Theorem 2.4]{WaZh}.

\item[3.]
Compared to the single species with or without chemotaxis \cite{BaoShen2}, \cite{DuLi}, as long as the initial total densities or one of species' habitat large enough, spreading can be guaranteed in two species system which indicates one species with a small initial density or habitat can survive with the help of competition.

\item[4.]
When $b=0$, system \eqref{one-free-boundary-eq-0} becomes parabolic-elliptic logistic type chemotaxis with a free boundary, Theorem \ref{free-boundary-thm2} covers vanishing-spreading dichotomy results in \cite{BaoShen2} when the repulsion chemotactic sensitivity coefficient $\chi_2=0$.

\item[5.] Nontrivial applications of principal spectral theory for linear parabolic equations on bounded domains and
the comparison principle for parabolic equations are used in the proofs of Theorems \ref{free-boundary-thm2}-\ref{free-boundary-thm4}.  The proofs of Theorems \ref{free-boundary-thm3} and \ref{free-boundary-thm4} also rely on the important properties of positive entire solutions of the following two species competition chemotaxis system on the half line:
\begin{equation}\label{one-free-boundary-sp}
\begin{cases}
u_t = u_{xx} -\chi_1  (u  w_{x})_x  + u(1 - u - av), \quad x\in (0,\infty)
\\
 v_t =  dv_{xx} - \chi_2  (v  w_{x})_x + rv(1 - bu -v), \quad x\in (0,\infty)
 \\
 0 = w_{xx} + ku + lv -\lambda w,  \quad x\in (0,\infty)

\\
u_x(t,0) = v_{x}(t,0) = w_{x}(t,0) = 0
\end{cases}
\end{equation}
(see Lemmas   \ref{Stability-lem} and \ref{Stability-lem-1}).
Note that \eqref{one-free-boundary-sp} can be viewed as the limit system \eqref{one-free-boundary-eq-0} as $h(t)\to\infty$.

\item[6.] By the techniques developed for the study of one free boundary problem  \eqref{one-free-boundary-eq-0},
 similar results can be obtained for following double spreading fronts free boundary problem,
\begin{equation}\label{two-free-boundary-eq}
\begin{cases}
u_t = u_{xx} -\chi_1  (u  w_{x})_x  + u(1 - u - av), \quad g(t)<x<h(t)
\\
 v_t =  dv_{xx} - \chi_2  (v  w_{x})_x + rv(1 - bu - v), \quad g(t)<x<h(t)
 \\
 0 = w_{xx} + ku + lv -\lambda w,  \quad g(t)<x<h(t)
 \\
 g'(t) = -\nu_1 u_x(t,g(t))-\nu_1 v_x(t,g(t)),
 \\
 h'(t) = -\mu_1 u_x(t,h(t))-\mu_2 v_x(t,h(t)),
\\
u(t,g(t)) = v(t,g(t)) = w_{x}(t,g(t)) = 0,
\\
 u(t,h(t)) = v(t,h(t)) = w_{x}(t,h(t)) = 0,
 \\
 h(0) = h_0, \quad g(0) = g_0\quad u(0,x) = u_0(x), v(0,x) = v_0(x)\quad  g_0\leq x\leq h_0,
\end{cases}
\end{equation}
where $a, b, k, l, \lambda$, $\chi_i, \mu_i,\nu_i, (i=1,2)$ are positive constant. To control  the length of the paper,
we will not provide a detailed study of \eqref{two-free-boundary-eq} in this paper.

\end{itemize}

The rest of this paper is organized in the following way. In section 2, we present some preliminary lemmas to be used in the proofs of the main theorems in later sections.
We study  the local and global existence of nonnegative solutions of \eqref{one-free-boundary-eq-0} and prove Theorem
\ref{free-boundary-thm1} in section 3.
In sections 4, 5 and 6, we explore  the spreading-vanishing dichotomy  behaviors of \eqref{one-free-boundary-eq-0}.
 We prove Theorem \ref{free-boundary-thm2} in section 4, prove Theorem \ref{free-boundary-thm3}
  in section 5, and prove Theorem \ref{free-boundary-thm4} in section 6.

\section{Preliminary}
In this section, we present some preliminary materials to be used in the later sections.

\subsection{Fisher-KPP equations on  fixed bounded domains}

In this subsection, we recall some basic properties for the principal spectrum of linear parabolic equations on fixed bounded domains and for the asymptotic dynamics of Fisher-KPP equations on fixed bounded domains.

First, let
 $\lambda^1_p(d_0,c,a_0,l)$ be the principal eigenvalue of   the following linear
eigenvalue problem,
 \begin{equation}\label{Eq-PLE1-1}
 \begin{cases}
 d_0\phi_{xx}+c \phi_x + a_0\phi=\lambda \phi,\quad 0<x<l\cr
 \phi_x(0)=\phi(l)=0,
 \end{cases}
 \end{equation}
 where $d_0$ is a positive constant.

 \begin{lem}
 \label{principal-eigenvalue-lm1}
For given $a_0>0$  and $c$ satisfying $|c|<2\sqrt{d_0 a_0}$, there exists $l_1^*(d_0,c,a_0)$ such that $\lambda^1_{p}(d_0,c,a_0,l)>0$ for $ l>l_1^*(d_0,c,a_0)$ and $\lambda_1^p(d_0,c,a_0,l_1^*(d_0,c,a_0))=0$.
 \end{lem}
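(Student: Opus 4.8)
The plan is to remove the drift term $c\phi_x$ by a standard exponential substitution and then study how the resulting self-adjoint principal eigenvalue depends on $l$. Writing $\phi(x)=e^{-cx/(2d_0)}\psi(x)$, a direct computation turns \eqref{Eq-PLE1-1} into
\[
d_0\psi_{xx}+b_0\,\psi=\lambda\psi\quad(0<x<l),\qquad \psi_x(0)=\tfrac{c}{2d_0}\psi(0),\quad \psi(l)=0,
\]
with $b_0:=a_0-\frac{c^2}{4d_0}$. Since $\phi\mapsto\psi$ is a linear order isomorphism between the two eigenspaces, $\lambda^1_p(d_0,c,a_0,l)$ equals the principal (i.e.\ largest) eigenvalue of this Robin--Dirichlet Sturm--Liouville problem, which is simple with a one-signed eigenfunction (Sturm oscillation theory / Krein--Rutman) and enjoys the variational characterization
\[
\lambda^1_p(d_0,c,a_0,l)=\max_{0\neq\psi\in H^1(0,l),\,\psi(l)=0}\frac{-d_0\int_0^l\psi_x^2\,dx-\frac{c}{2}\psi(0)^2+b_0\int_0^l\psi^2\,dx}{\int_0^l\psi^2\,dx}.
\]
The crucial observation, and the only place the hypothesis enters, is that $|c|<2\sqrt{d_0a_0}$ is exactly the condition $b_0>0$.

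From this characterization I would extract three facts. First, strict monotonicity in $l$: for $l_1<l_2$, extending any admissible $\psi$ on $(0,l_1)$ by zero to $(0,l_2)$ leaves the Rayleigh quotient unchanged, so $\lambda^1_p(\cdot,l_1)\le\lambda^1_p(\cdot,l_2)$; equality would force the zero extension to be a maximizer, hence a smooth solution of the ODE vanishing on an open subinterval, which is impossible, so the inequality is strict. Second, continuity in $l$: rescaling $(0,l)$ to $(0,1)$ gives an operator with coefficients depending smoothly on $l$, and simple eigenvalues depend continuously on the coefficients. Third, the limiting behavior: using the Poincar\'e inequalities $\int_0^l\psi^2\le l^2\int_0^l\psi_x^2$ and $\psi(0)^2\le l\int_0^l\psi_x^2$ (both valid because $\psi(l)=0$), one obtains for small $l$ a bound of the form $\lambda^1_p\le b_0-\frac{d_0}{2l^2}\to-\infty$ as $l\to0^+$; testing with $\psi_l(x)=\cos\frac{\pi x}{2l}$ gives $\lambda^1_p(d_0,c,a_0,l)\ge b_0-\frac{d_0\pi^2}{4l^2}-\frac{c}{l}$, which tends to $b_0>0$, so $\lambda^1_p>0$ for all sufficiently large $l$.

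Combining these, $l\mapsto\lambda^1_p(d_0,c,a_0,l)$ is continuous, strictly increasing, tends to $-\infty$ as $l\to0^+$, and is eventually positive; the intermediate value theorem then yields a unique $l_1^*(d_0,c,a_0)$ with $\lambda^1_p(d_0,c,a_0,l_1^*)=0$, and strict monotonicity gives $\lambda^1_p(d_0,c,a_0,l)>0$ for every $l>l_1^*$, which is the claim. A more computational alternative is to solve the transformed ODE explicitly: setting $\lambda=b_0-d_0\mu^2$ reduces the boundary conditions to the transcendental equation $\tan(\mu l)=-2d_0\mu/c$ (for $c\neq0$), from which the same monotonicity, continuity, and limits follow, with the threshold characterized by $\mu=\sqrt{b_0/d_0}$. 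I expect the main subtlety to be establishing continuity together with \emph{strict} monotonicity and the sign of the $l\to\infty$ limit at once, since it is precisely there that $b_0>0$ (equivalently $|c|<2\sqrt{d_0a_0}$) is indispensable; the indefinite boundary term $-\frac{c}{2}\psi(0)^2$ in the Rayleigh quotient is only a cosmetic nuisance and is absorbed harmlessly into all of the estimates above.
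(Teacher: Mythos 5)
Your proof is correct, and it is in fact more careful than the paper's own argument. The paper disposes of the lemma in one line by asserting ``by a direct computation'' the closed formula $\lambda^1_p(d_0,c,a_0,l)=a_0-\frac{\pi^2 d_0}{4l^2}-\frac{c^2}{4d_0}$ and then reading off the limits as $l\to 0^+$ and $l\to\infty$. That formula is obtained from the same Liouville substitution $\phi=e^{-cx/(2d_0)}\psi$ you use, but it tacitly assumes the Neumann condition at $x=0$ survives the substitution; as you correctly observe, it becomes the Robin condition $\psi_x(0)=\frac{c}{2d_0}\psi(0)$, so the eigenvalues are determined by the transcendental equation $\tan(\mu l)=-2d_0\mu/c$ and the paper's formula is exact only for $c=0$. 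Your route --- the Rayleigh quotient with the boundary term $-\frac{c}{2}\psi(0)^2$, strict monotonicity in $l$ by zero extension plus unique continuation, continuity by rescaling, the Poincar\'e bound for $l\to 0^+$, and the test function $\cos\frac{\pi x}{2l}$ for $l\to\infty$ --- avoids the explicit formula entirely and delivers exactly the qualitative facts (continuity, strict increase, limits $-\infty$ and $b_0=a_0-\frac{c^2}{4d_0}>0$) needed for the intermediate value argument, and it makes transparent that $|c|<2\sqrt{d_0a_0}$ enters only through $b_0>0$. The only price is length; what it buys is a proof that is actually valid for $c\neq 0$, whereas the paper's stated eigenvalue formula would need either your correction or a remark that it is being used only as a heuristic for the (correct) qualitative behavior.
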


 \begin{proof}
  By a direct computation, we have $\lambda^1_p(d_0, c,a_0,l)=a_0 -\frac{\pi^2 d_0}{4l^2} - \frac{c^2}{4d_0}$. Therefore, $$\lim_{l\to 0^+}\lambda^1_p(d_0,c,a_0,l) = -\infty,\quad
  \lim_{l\to \infty}\lambda^1_p(d_0,c,a_0,l) = a_0-\frac{c^2}{4d_0}.
   $$
   If the assumption $a_0> \frac{c^2}{4d_0}$ holds, there exist a unique $l_1^*(d_0,c,a_0)$ such that $\lambda^1_{p}(d_0,c,a_0,l)>0$ for $ l>l_1^*(d_0,c,a_0)$ and $\lambda^1_{p}(d_0,c,a_0,l_1^*(d_0,c,a_0))=0$.
  \end{proof}

Consider
\begin{equation}
\label{fisher-kpp-eq1}
\begin{cases}
u_t=d_0u_{xx}+\beta(t,x) u_x +u(a-b u),\quad 0<x<l\cr
u_x(t,0)=u(t,l)=0,
\end{cases}
\end{equation}
where $\beta(t,x)$ is a bounded $C^1$ function.
 For given $u_0\in X(l)= \{u\in C([0,l])\,|\, u_x(0)=u(l)=0\}$ with $u_0\ge 0$, let $u(t,x;u_0,d_0,\beta,a,b)$ be the solution of
\eqref{fisher-kpp-eq1} with $u(0,x;u_0,d_0,\beta,a,b)=u_0(x)$.

\begin{lem}
\label{fisher-kpp-lm1}
 Suppose that $\lambda_p^1(d_0,0,a,l)>0$. Let $\beta_1^*(d_0,a,l)>0$ be such that
  $\beta_1^*<\sqrt {4d_0a -\frac{\pi^2 d_0^2}{l^2}}$. There is  $\epsilon_1^*=\epsilon_1^*(d_0,a,l)>0$ such that for any bounded $C^1$ function $\beta(t,x)$ with
$|\beta(t,x)|\le \beta_1^*$, and
 any $u_0\in X^+(l)=\{u\in X(l)| u(x) >0\; \mbox{for}\; 0\leq x\leq l\}$,
$$
\liminf_{t\to\infty} \inf_{0\le x\le \frac{4l}{5} } u(t,x;u_0,d_0,\beta,a,b)\ge \epsilon_1^*.
$$
\end{lem}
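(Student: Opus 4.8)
\textbf{Proof plan for Lemma \ref{fisher-kpp-lm1}.}

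The plan is to derive a uniform-in-$\beta$ lower bound by constructing a single stationary subsolution for a limiting problem and then using a comparison argument together with a compactness/continuity argument in the drift coefficient $\beta$. First I would reduce to the question of bounding solutions from below independently of the specific $\beta$ satisfying $|\beta(t,x)|\le \beta_1^*$. Since $\beta_1^* < \sqrt{4d_0 a - \pi^2 d_0^2/l^2}$, Lemma \ref{principal-eigenvalue-lm1} (applied with the constant drift $c=\pm\beta_1^*$, noting $a_0=a$ and that the hypothesis $\lambda_p^1(d_0,0,a,l)>0$ means $a>\pi^2 d_0/(4l^2)$, which combined with the bound on $\beta_1^*$ gives $a - (\beta_1^*)^2/(4d_0) - \pi^2 d_0/(4l^2) > 0$) guarantees that there is a constant $\delta_0>0$ such that $\lambda_p^1(d_0,c,a,l') > 0$ for every $|c|\le\beta_1^*$ and every $l'$ with $\tfrac{4l}{5} < l' \le l$; in fact by continuity and monotonicity of $\lambda_p^1$ in $l'$ one can fix a single $l'\in(\tfrac{4l}{5},l)$ with $\lambda_p^1(d_0,c,a,l')>0$ uniformly over $|c|\le\beta_1^*$.

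Next I would build a subsolution. Fix $l'$ as above and let $\phi_c$ be the positive principal eigenfunction for \eqref{Eq-PLE1-1} on $(0,l')$ with drift $c$, normalized so that $\|\phi_c\|_\infty = 1$; since $\lambda_p^1(d_0,c,a,l')>0$ uniformly, for $\eta>0$ small enough the function $\eta\phi_c$ satisfies
\begin{equation*}
d_0(\eta\phi_c)_{xx} + c(\eta\phi_c)_x + (\eta\phi_c)(a - b\,\eta\phi_c) \ge \eta\phi_c\big(\lambda_p^1(d_0,c,a,l') - b\eta\big) \ge 0
\end{equation*}
on $(0,l')$, with $(\eta\phi_c)_x(0)=0$ and $\eta\phi_c = 0$ at $x=l'$. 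Because the principal eigenvalue and eigenfunction depend continuously on $c\in[-\beta_1^*,\beta_1^*]$ (a compact interval), the threshold $\eta$ can be chosen independently of $c$; moreover, since the true drift $\beta(t,x)$ varies, the correct comparison is with the equation whose drift is $\beta$ itself, but a stationary function $\eta\phi_c$ with $c$ chosen to dominate $\beta$ in the appropriate sense — or, more robustly, one compares on the slightly smaller interval $(0,l')\subset(0,l)$ where one can absorb the variation of $\beta$ into the spectral gap. I would then invoke the parabolic comparison principle: any solution $u(t,x;u_0,d_0,\beta,a,b)$ with $u_0\in X^+(l)$ is, by the strong maximum principle, strictly positive on $[0,l')$ for $t>0$, hence at some time $t_1>0$ we have $u(t_1,\cdot)\ge \eta\phi_c(\cdot)$ on $[0,l']$ (using that $\phi_c$ vanishes at $l'$ and $u(t_1,\cdot)>0$ on the compact set $[0,l']$, so $u(t_1,\cdot)/\phi_c$ is bounded below there). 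Comparison on the domain $(0,l')$ with zero Dirichlet data at $l'$ and the stationary subsolution $\eta\phi_c$ then forces $u(t,x) \ge \eta\phi_c(x)$ for all $t\ge t_1$, $x\in[0,l']$, whence $\liminf_{t\to\infty}\inf_{[0,4l/5]}u(t,x) \ge \eta\min_{[0,4l/5]}\phi_c(x) =: \epsilon_1^* > 0$.

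The main obstacle is making the lower bound \emph{uniform in $\beta$}, both in the choice of $t_1$ (the time after which $u(t,\cdot)\ge\eta\phi_c$) and in $\epsilon_1^*$. The issue is that $t_1$ a priori depends on how small $u_0$ is, which is fine for a fixed $u_0$, but the more delicate point is the dependence on $\beta$: one must ensure the subsolution construction and the spectral gap $\lambda_p^1(d_0,c,a,l')>0$ hold simultaneously for the whole family $|\beta|\le\beta_1^*$. I would handle this by the compactness of $[-\beta_1^*,\beta_1^*]$ and continuity of $\lambda_p^1$ and $\phi_c$ in $c$ (standard for self-adjointizable one-dimensional Sturm–Liouville problems after the change of variables $\phi = e^{-cx/(2d_0)}\psi$), taking the worst case over this compact set; the key inequality $(\beta_1^*)^2 < 4d_0 a - \pi^2 d_0^2/l^2$ is exactly what keeps $a - (\beta_1^*)^2/(4d_0) - \pi^2d_0/(4l'^2)$ positive for $l'$ close enough to $l$, so there is room. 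A secondary technical point is that $\beta$ is time-dependent and only $C^1$ and bounded, not constant, so strictly speaking one should compare the nonautonomous equation with the autonomous subsolution; this is legitimate because the subsolution inequality $d_0(\eta\phi_c)_{xx} + \beta(t,x)(\eta\phi_c)_x + (\eta\phi_c)(a-b\eta\phi_c)\ge 0$ can be arranged to hold for \emph{every} admissible $\beta$ once $c$ is taken as $\beta_1^*$ and one notes $(\eta\phi_c)_x$ has a definite sign on $(0,l')$ (or, if it does not, one splits the interval or uses the $e^{-cx/2d_0}$ transformation to reduce to a driftless problem with a modified potential bounded below uniformly in $\beta$).
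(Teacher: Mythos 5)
Your core mechanism is sound and is essentially the one the paper uses: reduce to the worst\-/case constant drift $c=\beta_1^*$ and exploit that the comparison function is monotone decreasing in $x$, so that $\beta(t,x)\p_x\psi\ge\beta_1^*\p_x\psi$ whenever $\p_x\psi\le 0$ and $\beta\le\beta_1^*$. Your hedging about compactness in $c$, continuity of the eigenpair, and the exponential transformation is unnecessary once you fix $c=\beta_1^*$ and observe (as you do at the end) that the principal eigenfunction of \eqref{Eq-PLE1-1} with constant drift is strictly decreasing on $(0,l')$; the paper implements the same sign trick at the level of solutions rather than eigenfunctions, taking a decreasing $\tilde u_0\le u_0$ and noting that the solution of the constant\-/drift problem stays decreasing in $x$ and is therefore a subsolution of the $\beta$-equation.

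There is, however, a genuine gap: as written, your conclusion $\liminf_{t\to\infty}\inf_{[0,4l/5]}u\ge\eta\min_{[0,4l/5]}\phi_c=:\epsilon_1^*$ produces a constant that depends on $u_0$, whereas the lemma requires $\epsilon_1^*=\epsilon_1^*(d_0,a,l)$ uniform over all $u_0\in X^+(l)$ (this uniformity is actually used later, e.g.\ in Step 3 of the proof of Theorem \ref{free-boundary-thm2}, where the lemma is applied to initial data whose amplitude degenerates with $n$). Your $\eta$ must satisfy two constraints: $\eta\le \lambda_p^1(d_0,\beta_1^*,a,l')/b$ so that $\eta\phi_c$ is a subsolution, and $\eta\phi_c\le u(t_1,\cdot)$ on $[0,l']$; the second forces $\eta$ to shrink with $\inf_{[0,l']}u(t_1,\cdot)$, hence with $u_0$, and you cannot instead fix $\eta$ and wait for $u(t_1,\cdot)$ to rise above $\eta\phi_c$, since that is essentially what the lemma is trying to prove. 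The missing step is a bootstrap from the $u_0$-dependent floor to a fixed one: once $u(t,\cdot)\ge\eta\phi_{\beta_1^*}$ for $t\ge t_1$, compare $u$ with the solution $U$ of the autonomous constant\-/drift logistic equation $U_t=d_0U_{xx}+\beta_1^*U_x+U(a-bU)$ with initial datum $\eta\phi_{\beta_1^*}$ (which remains decreasing in $x$, hence is still a subsolution of the $\beta$-equation); since $\eta\phi_{\beta_1^*}$ is a strict subsolution and $\lambda_p^1(d_0,\beta_1^*,a,l')>0$, $U$ increases in $t$ to the unique positive stationary solution $u_1^*$, which is independent of $\eta$ and of $u_0$, and $\epsilon_1^*:=\inf_{[0,4l/5]}u_1^*$ then works. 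This final convergence-to-$u_1^*$ step is precisely how the paper closes the argument.
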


\begin{proof}
First, consider
\begin{equation}
\label{fisher-kpp-eq1-1}
\begin{cases}
u_t=d_0u_{xx}+\beta_1^* u_x +u(a-b u),\quad 0<x<l\cr
u_x(t,0)=u(t,l)=0.
\end{cases}
\end{equation}
Note that $\lambda_p^1(d_0,\beta_1^*,a,l)>0$. Hence \eqref{fisher-kpp-eq1-1} has a unique positive stationary solution
$u_1^*(x)$. Moreover, it is not difficult to prove that $\p_x u_1^*(x)<0$ for $x\in (0,l)$ and
$$
\lim_{t\to\infty} \sup_{x\in [0,l)}(u(t,x;u_0,d_0,\beta_1^*,a,b,l)-{u_1^*(x)}) =0.
$$

Next, for any $u_0\in X(l)$ with $u_0(x)>0$ for $0\le x<l$, without loss of generality, we may assume that there is
$\tilde u_0\in X(l)$, $\p_x\tilde u_0(x)<0$ for $0<x<l$ and $0<\tilde u_0(x)\le u_0(x)$ for $0<x<l$.
Then by the comparison principle for parabolic equations,
$$
\p_x u(t,x;\tilde u_0, d_0,\beta_1^*, a, b,l)<0\quad \forall\, t>0,\,\, 0<x<l
$$
{ for any bounded $C^1$ function $\beta(t,x)$ with
$|\beta(t,x)|\le \beta_1^*$},
and
then
$$
u(t,x;u_0,d_0,\beta,a,b,l)\ge u(t,x;\tilde u_0, d_0,\beta,a,b,l)\ge u(t,x;\tilde u_0,d_0,\beta_1^*,a,b,l)\quad\forall\, t>0,\,\, 0<x<l.
$$
It then follows that
$$
\liminf_{t\to\infty} \inf_{0\le x\le \frac{4l}{5} } u(t,x;u_0,d_0,\beta,a,b)\ge \inf_{0\le x\le \epsilon_1^*:=\frac{4l}{5}}u_1^*(x).
$$
\end{proof}

Next, let $\lambda^2_p(d_0,c,a_0,l)$ be the principal eigenvalue the following
problem,
 \begin{equation}\label{Eq-PLE1-2}
 \begin{cases}
 d_0\phi_{xx}+c\phi_x +a_0\phi=\lambda \phi,\quad -l<x<l\cr
 \phi(-l)=\phi(l)=0.
 \end{cases}
 \end{equation}

 \begin{lem}
 \label{principal-eigenvalue-lm2}
 If $a_0>\frac{c^2}{4d_0}$, then there exist an unique $l_2^*(d_0,c,a_0)$ such that $\lambda^2_{p}(d_0,c,a_0,l)>0$ for $ l>l_2^*(d_0,c,a_0)$ and $\lambda^2_{p}(d_0,c,a_0,l_2^*(d_0,c,a_0))=0$.
 \end{lem}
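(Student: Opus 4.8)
The plan is to follow the same route as in the proof of Lemma \ref{principal-eigenvalue-lm1}: remove the drift term $c\phi_x$ by an exponential change of variables, reduce \eqref{Eq-PLE1-2} to a self-adjoint Dirichlet problem on the interval $(-l,l)$, read off the principal eigenvalue explicitly, and then analyze its dependence on $l$.

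Concretely, I would set $\phi(x)=e^{-\frac{c}{2d_0}x}\psi(x)$. A direct substitution shows that $\phi$ solves \eqref{Eq-PLE1-2} with eigenvalue $\lambda$ if and only if $\psi$ solves
\[
d_0\psi_{xx}+\Big(a_0-\tfrac{c^2}{4d_0}\Big)\psi=\lambda\psi,\quad -l<x<l,\qquad \psi(-l)=\psi(l)=0 .
\]
Since $x\mapsto e^{-\frac{c}{2d_0}x}$ is strictly positive on $[-l,l]$ and the transformation preserves the Dirichlet boundary conditions, it maps positive eigenfunctions to positive eigenfunctions; hence by the Krein--Rutman characterization $\lambda^2_p(d_0,c,a_0,l)$ equals the principal eigenvalue of the self-adjoint problem above. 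That principal eigenvalue is classical: the first Dirichlet eigenvalue of $d_0\psi_{xx}$ on $(-l,l)$ is $-\frac{\pi^2 d_0}{4l^2}$, with positive eigenfunction $\cos\frac{\pi x}{2l}$, so
\[
\lambda^2_p(d_0,c,a_0,l)=a_0-\frac{c^2}{4d_0}-\frac{\pi^2 d_0}{4l^2},\qquad \phi(x)=e^{-\frac{c}{2d_0}x}\cos\frac{\pi x}{2l}.
\]

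The conclusion is then immediate from this formula. The right-hand side is continuous and strictly increasing in $l>0$, with $\lim_{l\to 0^+}\lambda^2_p=-\infty$ and $\lim_{l\to\infty}\lambda^2_p=a_0-\frac{c^2}{4d_0}$, which is strictly positive precisely under the hypothesis $a_0>\frac{c^2}{4d_0}$. By the intermediate value theorem there is a unique $l_2^*(d_0,c,a_0)=\frac{\pi\sqrt{d_0}}{2\sqrt{a_0-c^2/(4d_0)}}$ at which $\lambda^2_p$ vanishes, and $\lambda^2_p(d_0,c,a_0,l)>0$ for all $l>l_2^*(d_0,c,a_0)$.

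There is essentially no serious obstacle here; the computation is routine and mirrors Lemma \ref{principal-eigenvalue-lm1}. The only point deserving a line of justification is the identification of $\lambda^2_p$ with the principal eigenvalue of the transformed self-adjoint problem, which follows from the positivity of the conjugating factor together with the simplicity and positivity of the principal eigenfunction; this is exactly the reduction already used implicitly in Lemmas \ref{principal-eigenvalue-lm1} and \ref{fisher-kpp-lm1}.
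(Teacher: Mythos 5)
Your proposal is correct and follows essentially the same route as the paper, which simply invokes ``similar arguments to those in Lemma \ref{principal-eigenvalue-lm1}'', i.e.\ computing the explicit formula $\lambda^2_p(d_0,c,a_0,l)=a_0-\frac{c^2}{4d_0}-\frac{\pi^2 d_0}{4l^2}$ and analyzing its monotone dependence on $l$ together with the limits as $l\to 0^+$ and $l\to\infty$. Your explicit conjugation $\phi=e^{-\frac{c}{2d_0}x}\psi$, which genuinely preserves the Dirichlet conditions on $(-l,l)$, is a clean way to justify that formula.
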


 \begin{proof}
  It can be proved by similar arguments to those in Lemma \ref{principal-eigenvalue-lm1}.
  \end{proof}

Consider
\begin{equation}
\label{fisher-kpp-eq2}
\begin{cases}
u_t=d_0u_{xx}+\beta(t,x) u_x +u(a-b u),\quad -l<x<l\cr
u(t,-l)=u(t,l)=0,
\end{cases}
\end{equation}
where $a, b$ are positive constants, and $\beta(\cdot,\cdot)$ is a $C^1$ bounded function.

\begin{lem}
\label{fisher-kpp-lm2}
Suppose that $\lambda^2_p(d_0,0,a,l)>0$. For given $M^*>0$,  there are $\beta_2^*(d_0,a,l_0)>0$ and $\epsilon_2^*=\epsilon_2^*(d_0,a,l)>0$ such that for
$C^1$ function $\beta(t,x)$ satisfying $|\beta(t,x)|\le \beta_2^*$, $|\beta_t(t,x)|\le M^*$,
  $|\beta_x(t,x)|\le M^*$, and  any $u_0\in C^1([-l,l])\setminus\{0\}$ with $u_0(-l)=u_0(l)=0$ and
$u_0(x)>0 $ for $x\in (-l,l)$,
$$
\liminf_{t\to\infty}
\inf_{-\frac{4l}{5}\le x\le \frac{4l}{5}}u(t,x; u_0, d_0, \beta,a,b,l)\ge \epsilon_2^*,
$$
where $u(t,x;u_0,d_0, \beta,a,b)$ is the solution of \eqref{fisher-kpp-eq2} with
$u(0,x;u_0,\beta,a,b)=u_0(x)$.
\end{lem}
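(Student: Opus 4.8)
The plan is to build a single time-independent strict subsolution that works for every admissible $\beta$ at once, use it to show that $u$ persists, and then upgrade that (initially $u_0$- and $\beta$-dependent) persistence to the uniform lower bound by a time-shift compactness argument combined with a sliding argument. Set $\lambda_0:=\lambda^2_p(d_0,0,a,l)=a-\tfrac{\pi^2d_0}{4l^2}>0$ and let $\psi(x)=\cos(\pi x/(2l))>0$ on $(-l,l)$ be the corresponding Dirichlet eigenfunction. The obvious candidate $\varepsilon\psi$ is \emph{not} a subsolution for nonzero $\beta$: since $\psi'(\pm l)\ne0$ while $\psi(\pm l)=0$, the advection term $\beta(\varepsilon\psi)'$ is $O(\varepsilon)$ near the endpoints whereas $a\varepsilon\psi$ is $o(\varepsilon)$ there, so no smallness of $\beta$ rescues the inequality (this is also why the monotonicity device of Lemma \ref{fisher-kpp-lm1} does not transfer to the two-sided Dirichlet problem). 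The remedy is to take $\underline v:=\psi^{1+\gamma}$ with a small fixed $\gamma\in(0,1)$ chosen (depending only on $d_0,a,l$) so that $\lambda_0':=\lambda_0-\gamma(a-\lambda_0)>0$: then $\underline v\in C^2((-l,l))\cap C^1([-l,l])$, $\underline v'(\pm l)=0$, $\underline v>0$ on $(-l,l)$, $\underline v=0$ at $\pm l$, $\underline v$ vanishes like $(l-|x|)^{1+\gamma}$, and the chain rule produces the \emph{positive} term $d_0\gamma(1+\gamma)\psi^{\gamma-1}(\psi')^2$, which blows up at $\pm l$ at precisely the rate needed to absorb $\beta\underline v'$. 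A routine check — split $(-l,l)$ into a fixed neighborhood of $\{\pm l\}$, where the chain-rule term dominates, and its complement, where $\underline v$ is bounded below so plain smallness of $\beta$ suffices, the value at $x=0$ being pinned by $d_0\underline v''(0)+a=\lambda_0'$ — yields constants $\beta_2^*>0$ and $\lambda_0''>0$, depending only on $d_0,a,l$, with $d_0\underline v''+\beta\underline v'+a\underline v\ge\lambda_0''\underline v$ on $(-l,l)$ whenever $|\beta|\le\beta_2^*$. Hence, with $\epsilon_0:=\lambda_0''/b$, for every such $\beta$ and every $\tau\in(0,\epsilon_0]$ the function $\tau\underline v$ is a stationary subsolution of \eqref{fisher-kpp-eq2}.

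Now fix an admissible $\beta$ and $u_0$. The strong maximum principle and Hopf's lemma give that $u(t_0,\cdot;u_0,d_0,\beta,a,b,l)$ is positive on $(-l,l)$ with linear decay at $\pm l$ for every $t_0>0$; since $\underline v$ vanishes super-linearly, $\tau\underline v\le u(t_0,\cdot;\dots)$ for some $\tau\in(0,\epsilon_0]$, so by comparison $u(t,\cdot;\dots)\ge\tau\underline v$ for all $t\ge t_0$. Put $\epsilon_2^*:=\epsilon_0\,\underline v(4l/5)=\epsilon_0\cos(2\pi/5)^{1+\gamma}>0$ (depending only on $d_0,a,l$, and $b$), and suppose for contradiction that $L:=\liminf_{t\to\infty}\inf_{|x|\le4l/5}u(t,x;u_0,\beta)<\epsilon_2^*$; note $L\ge\tau\underline v(4l/5)>0$. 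Choose $s_j\to\infty$ and $x_j\in[-4l/5,4l/5]$ with $u(s_j,x_j)\to L$ and $x_j\to x_*$. Using interior and boundary Schauder estimates, the uniform upper bound $u(t,\cdot)\le M_1:=\tfrac{a}{b(1-e^{-a})}$ for $t\ge1$ (ODE comparison, independent of $u_0$), and the bounds $|\beta_t|,|\beta_x|\le M^*$ (Arzela--Ascoli on the shifted coefficients), we extract along a subsequence $u(s_j+\cdot,\cdot)\to\bar u$ in $C^{1,2}_{\mathrm{loc}}(\mathbb{R}\times[-l,l])$ and $\beta(s_j+\cdot,\cdot)\to\bar\beta$, where $\bar u$ is a bounded entire solution of \eqref{fisher-kpp-eq2} with coefficient $\bar\beta$ ($|\bar\beta|\le\beta_2^*$), $0\le\bar u\le M_1$, $\bar u(0,x_*)=L$, and $\bar u\ge\tau\underline v>0$ on $\mathbb{R}\times(-l,l)$.

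It remains to run the sliding argument. Let $\mu:=\inf\{\bar u(t,x)/\underline v(x):t\in\mathbb{R},\,x\in(-l,l)\}$. For each $t$, Hopf's lemma gives that $\bar u(t,x)$ decays linearly in $l-|x|$ near $\pm l$ while $\underline v\sim(l-|x|)^{1+\gamma}$, so $\bar u(t,x)/\underline v(x)\to+\infty$ there; using the uniform interior lower bound $\bar u\ge\tau\underline v$ to get a $t$-uniform Hopf constant, the infimum over $x$ is attained in a fixed compact subinterval, and hence the infimum defining $\mu$ is attained at an interior point $(t_1,x_1)$ (after at most one further time shift of $\bar u$). Also $\mu\le\bar u(0,x_*)/\underline v(x_*)\le L/\underline v(4l/5)<\epsilon_0$, so $S:=\mu\big(d_0\underline v''+\bar\beta\underline v'+\underline v(a-b\mu\underline v)\big)\ge\mu\underline v(\lambda_0''-b\mu)>0$ on $(-l,l)$. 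The function $z:=\bar u-\mu\underline v\ge0$ vanishes at $(t_1,x_1)$ and satisfies $z_t-d_0z_{xx}-\bar\beta z_x-c\,z=S$ with $c:=a-b(\bar u+\mu\underline v)$ bounded; evaluating at the interior minimum, where $z_t=z_x=0$, $z=0$ and $z_{xx}\ge0$, forces $-d_0z_{xx}(t_1,x_1)=S(t_1,x_1)>0$, a contradiction. Therefore $\mu\ge\epsilon_0$, whence $L\ge\mu\,\underline v(x_*)\ge\epsilon_0\underline v(4l/5)=\epsilon_2^*$, contradicting $L<\epsilon_2^*$; this proves the lemma with $\beta_2^*,\epsilon_2^*$ as above.

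The main obstacle is the construction in the first step together with the uniformity it has to support: producing one subsolution valid for every $\beta$ with $|\beta|\le\beta_2^*$ — the standard ``$\varepsilon\times$eigenfunction'' device breaks down at the Dirichlet endpoints — and arranging it to vanish strictly faster than linearly so that in the sliding step the quantity $\mu$ is realized at an interior point rather than near $\pm l$. The accompanying technical points are (i) verifying the differential inequality for $\underline v=\psi^{1+\gamma}$ uniformly across the endpoint region and its complement, so that $\beta_2^*$ and $\lambda_0''$ depend only on $d_0,a,l$; and (ii) securing a \emph{uniform} Hopf (linear-decay) estimate for the entire solution $\bar u$, needed both to localize the infimum defining $\mu$ and to pass safely to the limit — and it is exactly here that the hypotheses $|\beta_t|,|\beta_x|\le M^*$ are used.
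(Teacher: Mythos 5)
Your proof is correct, but it is genuinely different from what the paper does: the paper disposes of Lemma \ref{fisher-kpp-lm2} with a one-line appeal to the arguments of \cite[Theorem 6.1]{MiSh3}, i.e.\ to the general principal spectrum/exponential separation theory for forward nonautonomous parabolic equations, whereas you give a self-contained argument built on the explicit universal subsolution $\underline v=\psi^{1+\gamma}$. Your key computation is sound: writing $d_0\underline v''+a\underline v=d_0\gamma(1+\gamma)\psi^{\gamma-1}(\psi')^2+\big(\lambda_0-\gamma(a-\lambda_0)\big)\underline v$ and absorbing $\beta\underline v'=\beta(1+\gamma)\psi^{\gamma}\psi'$ by Cauchy--Schwarz between the two positive terms gives the strict differential inequality with $\beta_2^*$ and $\lambda_0''$ depending only on $d_0,a,l$ (in fact the AM--GM bound $\beta_2^*=\sqrt{2d_0\gamma\lambda_0'/(1+\gamma)}$ makes the case-splitting near $\pm l$ unnecessary); the super-linear vanishing of $\underline v$ is exactly what makes both the initial ordering $\tau\underline v\le u(t_0,\cdot)$ and the interior attainment of $\mu$ work, and the hypotheses $|\beta_t|,|\beta_x|\le M^*$ are used precisely where you say, to compactify the shifted coefficients. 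What your route buys is transparency and explicit constants ($\epsilon_2^*=\epsilon_0\cos(2\pi/5)^{1+\gamma}$, which, like the paper's own statement, silently also depends on $b$); what the paper's route buys is generality and brevity, since \cite{MiSh3} covers general nonautonomous coefficients without constructing barriers by hand. Two points you should write out if this is to stand alone: the $t$-uniform Hopf (linear lower) bound for the entire solution $\bar u$ near $\pm l$, which you assert but which needs a short barrier argument using $\bar u\ge\tau\underline v$ on a compact subinterval and the boundedness of $\bar\beta$; and the observation that after the second time shift the limit still satisfies $\hat u\ge\mu\underline v$ with an interior zero of $\hat u-\mu\underline v$, which is what the final evaluation at the minimum actually uses.
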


\begin{proof}
It  follows from the arguments of \cite[Theorem 6.1]{MiSh3}.
\end{proof}

\subsection{Stability results on  fixed unbounded domains}
In this subsection, we present some stability results of the two species chemotaxis system \eqref{one-free-boundary-sp} on the half line.

\begin{lem}\label{Stability-lem}
Assume {\bf(H2)}. If $(u^*(t,x),v^*(t,x),w^*(t,x))$ is a positive entire solution of
 \eqref{one-free-boundary-sp} with $\inf_{t\in\RR,x\in [0,\infty)}u^*(t,x)>0$ and
 $\inf_{t\in\RR,x\in [0,\infty)}v^*(t,x)>0$, then
 $$
 (u^*(t,x),v^*(t,x),w^*(t,x))\equiv
 \big(\frac{1-a}{1-ab},\frac{1-b}{1-ab},\frac{k}{\lambda}\frac{1-a}{1-ab} +\frac{l}{\lambda}\frac{1-b}{1-ab}\big).
 $$
\end{lem}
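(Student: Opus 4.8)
The plan is to show that a bounded-away-from-zero positive entire solution must be the unique positive constant equilibrium by exploiting the weak-competition structure encoded in {\bf (H2)} together with comparison arguments. First I would note that since $(u^*,v^*,w^*)$ is an entire solution with $\inf u^*>0$, $\inf v^*>0$, and (by Theorem \ref{free-boundary-thm1}-type a priori bounds applied on the half line) $\sup u^*\le \bar A_1$, $\sup v^*\le\bar A_2$, the quantities
$$
\underline u=\inf_{t,x}u^*,\quad \overline u=\sup_{t,x}u^*,\quad \underline v=\inf_{t,x}v^*,\quad \overline v=\sup_{t,x}v^*
$$
are all finite and strictly positive. The idea is then to run an iteration scheme on these four numbers. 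From the $w$-equation $0=w_{xx}+ku^*+lv^*-\lambda w^*$ with the Neumann condition at $x=0$, one gets (by the maximum principle / explicit Green's function on $[0,\infty)$) pointwise two-sided bounds on $w^*$ and, more importantly, on $w^*_{xx}=\lambda w^*-ku^*-lv^*$, hence control of the chemotaxis drift terms $\chi_i(u^*w^*_x)_x=\chi_i(u^*_xw^*_x+u^*w^*_{xx})$ in terms of $u^*,v^*$; the key inequalities $k\chi_1<1$, $l\chi_2<r$ from {\bf (H1)} are what keep the resulting effective logistic nonlinearities from degenerating, and they produce the comparison super/sub-solutions $\bar A_1,\bar A_2$ of the decoupled system quoted right after {\bf (H2)}.

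The main step is the \emph{sandwiching iteration}. Treating the $u^*$-equation as a scalar parabolic equation whose zeroth-order coefficient involves $-av^*$ and the $w^*_{xx}$ substitution, one shows $\overline u\le \bar A_1 - a\,\underline v \cdot(\text{const}) - \chi_1 k(\cdots)$ and symmetrically for the other three extremes, obtaining a closed system of inequalities in $(\underline u,\overline u,\underline v,\overline v)$. Under {\bf (H2)}, in particular $1>a\bar A_2+\chi_1 k\bar A_1$ and $r>rb\bar A_1+\chi_2 l\bar A_2$, this system contracts: iterating the bounds forces $\overline u=\underline u$ and $\overline v=\underline v$, i.e. $u^*\equiv$ const, $v^*\equiv$ const in $x$ and $t$. (Concretely, one sets up $\bar u_0=\bar A_1$, $\bar v_0=\bar A_2$, $\underline u_0,\underline v_0$ from the positivity, defines $\bar u_{n+1},\underline u_{n+1},\bar v_{n+1},\underline v_{n+1}$ by the equilibrium relations of the competition system with the chemotaxis correction, checks monotonicity of the sequences by induction using the smallness conditions \eqref{Cond-chi-1-2-A} to guarantee the relevant principal eigenvalues stay positive, and shows $\lim\bar u_n=\lim\underline u_n$, $\lim\bar v_n=\lim\underline v_n$.) Once constancy is established, plugging into the ODE system \eqref{ode-eq} (the $x$-derivatives and drift all vanish) forces $(u^*,v^*)$ to be a nonnegative equilibrium with both components positive, hence $(u^*,v^*)=(\frac{1-a}{1-ab},\frac{1-b}{1-ab})$, and then the $w^*$-equation gives $\lambda w^*=ku^*+lv^*$, i.e. the stated value of $w^*$.

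The hard part will be making the iteration rigorous in the presence of the chemotaxis terms: unlike the pure competition case, the effective nonlinearity for $u^*$ is not simply $u^*(1-u^*-av^*)$ but carries $-\chi_1(u^*_xw^*_x+u^*w^*_{xx})$, and one must convert the elliptic $w^*$-equation into usable pointwise bounds on $w^*_x$ and $w^*_{xx}$ that are linear in $\sup/\inf$ of $u^*,v^*$ with the \emph{right} constants — this is exactly where the explicit thresholds in \eqref{Cond-chi-1-2-A} (with the $4\sqrt\lambda$, $4\sqrt{d\lambda}$ factors) enter, presumably via an eigenvalue/energy estimate on the half line. I expect the cleanest route is to first prove, as a separate lemma (or quote the preliminary Fisher--KPP lemmas \ref{fisher-kpp-lm1}--\ref{fisher-kpp-lm2} and the elliptic regularity for the third equation), that for any solution of \eqref{one-free-boundary-sp} one has $\|w^*_x(t,\cdot)\|_\infty\le \frac{1}{2\sqrt\lambda}\|ku^*+lv^*\|_\infty$ and similar, after which the competition-style iteration goes through essentially as in \cite{Guo2012on},\cite{WaZh} with $1,r$ replaced by $1-k\chi_1,\,r-l\chi_2$ and $a,b$ by their chemotaxis-perturbed analogues. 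A secondary technical point is uniqueness of the limit of the iteration, for which a standard monotone-sequence argument plus the strict inequalities in {\bf (H2)} suffices.
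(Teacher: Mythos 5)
Your proposal follows essentially the same route as the paper's proof: set $l_i=\inf$, $L_i=\sup$ of $u^*,v^*$, use the elliptic equation to substitute for $w^*_{xx}$ and bound $\lambda w^*$ below by $kl_1+ll_2$, compare each parabolic equation with a logistic ODE to obtain four inequalities coupling the $L_i$ and $l_i$, and let the weak-competition inequalities in {\bf (H2)} force $L_i=l_i$, after which the constant equilibrium relations give the claimed values. The only differences are that the paper closes the argument in one step (subtracting the paired inequalities and multiplying the two resulting difference estimates to contradict $(1-\chi_1k\bar A_1)(r-\chi_2 l\bar A_2)>abr\bar A_1\bar A_2$) rather than by an infinite monotone iteration, and that it needs neither a pointwise bound on $w^*_x$ (the drift term $-\chi_1 u^*_x w^*_x$ is harmless against spatially constant super/sub-solutions) nor the eigenvalue-type conditions \eqref{Cond-chi-1-2-A}, which are used only in the persistence part of Theorem \ref{free-boundary-thm3}, not in this lemma.
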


\begin{proof}
It can be proved by the similar arguments as those of \cite[Lemm 2.2]{IsSaSh}. For completeness, we provide a proof in the following.

First, let
\begin{equation}
\label{1-zzz-z11-1}
l_1=\inf_{t\in\RR,x\in [0,\infty)}{u}^*(t,x), \quad L_1=\sup_{t\in\RR,x\in [0,\infty)}{u}^*(t,x),
\end{equation}
and
\begin{equation}
\label{1-zzz-z11-2}
l_2=\inf_{t\in\RR,x\in [0,\infty)}{v}^*(t,x), \quad   L_2=\sup_{t\in\RR,x\in [0,\infty)}v^*(t,x).
\end{equation}

 Note that
 $$
 kl_1+ ll_2\leq\lambda w^*(t,x)\leq k L_1+ l L_2\quad \forall\, t, x\in\mathbb{R}.
  $$
  and  $\min\{l_1,l_2,L_1,L_2\}>0$. By the comparison principle for parabolic equations, there holds
\begin{equation}\label{1-k-1-1}
    L_1\leq \bar A_1 (1-\chi_1 k l_1-al_2),
\end{equation}
\begin{equation}\label{1-k-2-1}
    l_1\geq \bar A_1(1-\chi_1 k L_1-aL_2),
\end{equation}
\begin{equation}\label{1-k-1-2}
  r  L_2\leq \bar A_2(r-\chi_2 l l_2-brl_1),
\end{equation}
\begin{equation}\label{1-k-2-2}
   r l_2\geq \bar A_2(r-\chi_2 lL_2-brL_1).
\end{equation}
Taking difference side by side of inequalities \eqref{1-k-1-1} and \eqref{1-k-2-1} yields
\begin{equation}\label{1-k-3}
(1-\chi_1 k \bar A_1)(L_1-l_1)\leq a\bar A_1(L_2-l_2).
\end{equation}
Similarly, it follows from inequalities \eqref{1-k-1-2} and \eqref{1-k-2-2} that
\begin{equation} \label{1-k-4}
(r-\chi_2 l \bar A_2)(L_2-l_2)\leq rb \bar A_2(L_1-l_1).
\end{equation}
The last two inequalities imply that
$$
(1-\chi_1 k\bar A_1)(r-\chi_2 l\bar A_2)(L_1-l_1)(L_2-l_2)\leq abr\bar A_1\bar A_2(L_1-l_1)(L_2-l_2).
$$
Assume that $(L_1-l_1)(L_2-l_2)\not =0$. then
$$
(1-\chi_1 k\bar A_1)(r-\chi_2 l\bar A_2)\leq abr\bar A_1\bar A_2.
$$
But by {\bf (H2)},
$$
(1-\chi_1 k\bar A_1)(r-\chi_2 l\bar A_2)> a b r \bar A_1\bar A_2,
$$
which is a contradiction. Therefore, $(L_1-l_1)(L_2-l_2)=0$,  which combined with \eqref{1-k-3} and \eqref{1-k-4} yield $l_1=L_1$ and $l_2=L_2$. Therefore,
$$
\begin{cases}
L_1=1-aL_2\cr
L_2=1-bL_1.
\end{cases}
$$
It then follows that $l_1=L_1=\frac{1-a}{1-ab}$ and $l_2=L_2=\frac{1-b}{1-ab}$,  which completes the proof of the lemma.
\end{proof}

\begin{lem}\label{Stability-lem-1}
\begin{itemize}
\item[(1)]
Assume {\bf(H3)}. If $(u^*(t,x),v^*(t,x),w^*(t,x))$ is a positive entire solution of
 \eqref{one-free-boundary-sp} with $\inf_{t\in\RR,x\in [0,\infty)}u^*(t,x)>0$, then
 $$
 (u^*(t,x),v^*(t,x),w^*(t,x))\equiv
 (1,0,\frac{k}{\lambda}).
 $$

 \item[(2)] Assume {\bf(H4)}. If $(u^*(t,x),v^*(t,x),w^*(t,x))$ is a positive entire solution of
 \eqref{one-free-boundary-sp} with $\inf_{t\in\RR,x\in [0,\infty)}v^*(t,x)>0$, then
 $$
 (u^*(t,x),v^*(t,x),w^*(t,x))\equiv
 (0,1,\frac{l}{\lambda}).
 $$
 \end{itemize}
\end{lem}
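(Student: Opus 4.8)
The plan is to mirror the proof of Lemma \ref{Stability-lem} (and \cite[Lemma 2.2]{IsSaSh}), exploiting the fact that in the weak-strong regime the bounds propagated by the comparison principle force one species to die out. I treat part (1) in detail; part (2) is symmetric with the roles of $u$ and $v$ interchanged and $d$ inserted in front of the $v$-Laplacian.

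For part (1), suppose $(u^*,v^*,w^*)$ is a positive entire solution of \eqref{one-free-boundary-sp} with $\inf_{t,x}u^*(t,x)>0$. Set
\[
l_1=\inf_{t\in\RR,x\ge 0}u^*,\quad L_1=\sup_{t\in\RR,x\ge 0}u^*,\quad l_2=\inf_{t\in\RR,x\ge 0}v^*,\quad L_2=\sup_{t\in\RR,x\ge 0}v^*,
\]
so $0<l_1\le L_1<\infty$ and $0\le l_2\le L_2<\infty$ (with $L_2<\infty$ from Theorem \ref{free-boundary-thm1}). From the third equation one has $k l_1+l l_2\le\lambda w^*\le kL_1+lL_2$, and applying the comparison principle for the parabolic equations satisfied by $u^*$ and $v^*$, exactly as in the derivation of \eqref{1-k-1-1}--\eqref{1-k-2-2}, yields
\[
L_1\le\bar A_1(1-\chi_1 k l_1-a l_2),\qquad l_1\ge\bar A_1(1-\chi_1 k L_1-a L_2),
\]
\[
r L_2\le\bar A_2(r-\chi_2 l l_2-rb l_1),\qquad r l_2\ge\bar A_2(r-\chi_2 l L_2-rb L_1).
\]
Here $\bar A_1=1/(1-k\chi_1)$ and $\bar A_2=r/(r-l\chi_2)$ are positive by {\bf (H1)}. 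First I would show $l_2=L_2$: subtracting the two $u$-inequalities gives $(1-\chi_1 k\bar A_1)(L_1-l_1)\le a\bar A_1(L_2-l_2)$, and subtracting the two $v$-inequalities gives $(r-\chi_2 l\bar A_2)(L_2-l_2)\le rb\bar A_2(L_1-l_1)$; multiplying these, if $(L_1-l_1)(L_2-l_2)\ne 0$ then $(1-\chi_1 k\bar A_1)(r-\chi_2 l\bar A_2)\le abr\bar A_1\bar A_2$. Under {\bf (H3)} we have $b\ge 1$, and I expect this is where the new input is needed: I must verify that the algebraic inequality $(1-\chi_1 k\bar A_1)(r-\chi_2 l\bar A_2)>abr\bar A_1\bar A_2$ still fails, i.e.\ holds in the reverse direction — and here it does \emph{not} automatically; instead one uses $1>a\bar A_2+\chi_1 k\bar A_1$ from \eqref{H3} to control the first factor. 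Concretely $1-\chi_1 k\bar A_1>a\bar A_2$, so if additionally $r-\chi_2 l\bar A_2$ were $>rb\bar A_1$ we would get the contradiction; but in the weak-strong case $b\ge 1$ blocks that route, so instead I argue directly from the $v$-inequalities that $v^*\to 0$.

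The cleaner argument, and the one I would actually carry out: from $l_1>0$ and $b\ge1$, the coefficient $r(1-bu^*-v^*)-\chi_2(\cdot)$ in the $v^*$-equation is bounded above by $r(1-bl_1)-(\text{nonnegative terms})\le r(1-l_1\cdot 1)<r$, and more usefully, comparing $v^*$ with the solution of $\bar v_t=d\bar v_{xx}+r\bar v(1-bl_1-\bar v)$ forces $L_2\le\bar A_2\cdot\max(0,\,\text{something}<1)$; iterating the coupling (a standard bootstrap: improve the bound on $u^*$ from below using the improved upper bound on $v^*$, then re-improve $v^*$, etc.) drives $L_2$ to a fixed point which, because $b\ge 1$, must be $0$. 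Once $v^*\equiv 0$, the system reduces to $u^*_t=u^*_{xx}-\chi_1(u^*w^*_x)_x+u^*(1-u^*)$, $0=w^*_{xx}+ku^*-\lambda w^*$ with $\inf u^*>0$; then the single-species stability result — or directly the two-sided comparison argument giving $L_1\le\bar A_1(1-\chi_1 k l_1)$ and $l_1\ge\bar A_1(1-\chi_1 k L_1)$, hence $(1-\chi_1 k\bar A_1)(L_1-l_1)\le 0$, hence $l_1=L_1$ — yields $u^*\equiv 1$ (the fixed point of $1-\chi_1 k u-u\cdot$... wait: with $v=0$, $L_1=\bar A_1(1-\chi_1 k L_1)$ gives $L_1(1-k\chi_1)^{-1}\cdot$, i.e.\ $L_1=1$), and then $w^*\equiv k/\lambda$ from the elliptic equation.

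The main obstacle I anticipate is making the ``$b\ge1$ forces $v^*\to0$'' step rigorous for entire solutions: the comparison principle gives inequalities between suprema/infima over all of $\RR\times[0,\infty)$, and one must argue the bootstrap actually converges rather than merely produces a valid but non-sharp inequality. The key algebraic point is that the map $L_2\mapsto\bar A_2(1-bl_1(L_2)-\cdot)_+$ composed with the corresponding update for $l_1$ has $0$ as its only nonnegative fixed point when $b\ge1$ (since $b u^*\ge u^*\ge l_1>0$ keeps $1-bu^*-v^*$ strictly below $1-l_1<1$ away from where $v^*$ is small), so $L_2=0$. Alternatively, and perhaps more transparently, one observes that if $L_2>0$ then the first two displayed inequalities combined with $l_2\ge 0$ give $l_1\ge\bar A_1(1-\chi_1 kL_1-aL_2)$ and $L_1\le\bar A_1(1-\chi_1 kl_1)$, while the $v$-inequalities with $b\ge1$ give $rL_2\le\bar A_2(r-\chi_2 ll_2-rbl_1)\le\bar A_2 r(1-l_1)$ and $rl_2\ge\bar A_2(r-\chi_2 lL_2-rbL_1)$; feeding these back and using \eqref{H3}, \eqref{new-Cond-chi-1-2-A} one reaches a contradiction of the same multiplicative type as in Lemma \ref{Stability-lem}. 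I would present the proof in the latter form: reduce to the inequality chain, invoke {\bf (H3)} to kill the mixed term, conclude $(L_1-l_1)(L_2-l_2)=0$ and $L_2-l_2=0$ with $l_2=L_2=0$, then finish with the single-species identity $u^*\equiv 1$ and $w^*\equiv k/\lambda$. Part (2) is identical after swapping $(u,a,k,\chi_1)\leftrightarrow(v,b,l,\chi_2)$, using \eqref{H4} and \eqref{new-new-Cond-chi-1-2-A} and the fact that the $v$-equation carries diffusion coefficient $d$.
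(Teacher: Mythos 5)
Your overall plan --- the $l_i,L_i$ comparison inequalities from Lemma \ref{Stability-lem}, adapted to exploit $b\ge 1$ --- is exactly what the paper intends (its proof is only a citation to the analogous lemma of [IsSaSh]), your diagnosis that the multiplicative contradiction of Lemma \ref{Stability-lem} is unavailable under {\bf (H3)} is correct, and your final single-species step is fine: once $v^*\equiv 0$, the two inequalities give $(1-\chi_1k\bar A_1)(L_1-l_1)\le 0$ with $1-\chi_1k\bar A_1>0$ by \eqref{H3}, and $\bar A_1=1+\chi_1k\bar A_1$ then yields $l_1=L_1=1$. Two points need fixing. First, in part (1) you list the inequality $rl_2\ge\bar A_2(r-\chi_2 lL_2-rbL_1)$, but its derivation in Lemma \ref{Stability-lem} uses $\inf v^*>0$, which is not a hypothesis here (indeed the conclusion is $v^*\equiv 0$); you must avoid it, and fortunately it is not needed. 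Second, the iterative bootstrap you flag as the main obstacle is unnecessary: a single substitution closes the argument. From $L_1\le\bar A_1(1-\chi_1 kl_1)$ and $l_1\ge\bar A_1(1-\chi_1 kL_1-aL_2)$ one gets $\bigl(1-(\chi_1k\bar A_1)^2\bigr)l_1\ge\bar A_1(1-\chi_1k\bar A_1-aL_2)$, and since $\bar A_1=1+\chi_1k\bar A_1$ this simplifies to
\begin{equation*}
l_1\ \ge\ 1-\frac{aL_2}{1-\chi_1k\bar A_1}.
\end{equation*}
On the other hand $b\ge1$, $l_2\ge0$ give $L_2\le\bar A_2(1-bl_1)\le\bar A_2(1-l_1)$ (or $L_2=0$ outright if $bl_1\ge1$). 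Combining, $L_2\le\frac{a\bar A_2}{1-\chi_1k\bar A_1}\,L_2$, so $L_2>0$ would force $1\le a\bar A_2+\chi_1k\bar A_1$, contradicting \eqref{H3}. Hence $v^*\equiv0$ and your concluding step applies; note that \eqref{new-Cond-chi-1-2-A} is not needed for this lemma. In part (2) the mirror computation legitimately uses the lower bound on $l_2$ (there $\inf v^*>0$ is assumed), together with the identity $r+\chi_2l\bar A_2=r\bar A_2$ and the condition $r>rb\bar A_1+\chi_2l\bar A_2$ from \eqref{H4}, to conclude $L_1=0$.
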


\begin{proof}
It can be proved by the similar arguments as those of \cite[Lemm 2.1]{IsSaSh}
\end{proof}

\section{Global existence}
In this section, we study the existence and uniqueness of globally defined solutions of \eqref{one-free-boundary-eq-0} with nonnegative initial functions and prove Theorem \ref{free-boundary-thm1}. To do so, we first prove three lemmas.
The first lemma is on the local existence results for the system \eqref{one-free-boundary-eq-0}.

\begin{lem}[Local existence]
\label{local-existence-lm}
For any given $h_0 >0$, nonnegative $u_0, v_0 \in C^2([0,h_0])$ with $u_0'(0) = v_0'(0) = 0$, $u_0(h_0) = v_0(h_0) = 0$, and $\alpha \in (0,1)$, there is $T >0$ such that the system \eqref{one-free-boundary-eq-0} has a unique local solution
\begin{equation*}
(u,v,w,h) \in C^{\alpha/2,1+\alpha}(\Omega_T) \times C^{\alpha/2,1+\alpha}(\Omega_T)\times C^{0,2+\alpha}(\Omega_T)\times C^{1+\alpha/2}([0,T])
\end{equation*}
with $u(0,x) = u_0(x), v_0(0,x) = v_0(x),$ and $h(0)=h_0$. Moreover
\begin{equation}
\|u\|_{C^{\alpha/2, 1+\alpha}(\Omega_T)} + \|v\|_{C^{\alpha/2, 1+\alpha}(\Omega_T)} + \|w\|_{C^{0, 2+\alpha}(\Omega_T)} + \|h\|_{C^{1+\alpha/2}([0,T]} \leq C
\end{equation}
where $\Omega_T := \{(t,x): 0\leq x\leq h(t), t\in (0,T]\},$ and $C$ only depends on $h_0,\alpha, T,\|u_0\|_{C^2([0,h_0])}$, and $\|v_0\|_{C^2([0,h_0])}$
\end{lem}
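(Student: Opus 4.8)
The plan is to establish local existence and uniqueness by the now-standard strategy of first straightening the free boundary with a change of variables, then reformulating the resulting fixed-domain problem as a fixed-point equation to which the contraction mapping principle applies. First I would fix the free boundary: introduce a smooth cutoff diffeomorphism $x\mapsto y$ with $y = x/h(t)$ (or, to keep later estimates uniform, a map that equals $x/h(t)$ near $x=h(t)$ and equals $x$ near $x=0$), so that the moving interval $[0,h(t)]$ becomes the fixed interval $[0,h_0]$ (or $[0,1]$). Under this transformation the equations for $u$ and $v$ become uniformly parabolic equations on the fixed domain with coefficients depending on $h$, $h'$, and the chemotactic drift $w_x$; the elliptic equation for $w$ becomes a two-point boundary value problem on the fixed domain, solvable explicitly (or via its Green's function) for $w$ in terms of $u,v$; and the Stefan condition $h'(t) = -\mu_1 u_x(t,h(t)) - \mu_2 v_x(t,h(t))$ becomes an ODE for $h$ driven by the transformed $y$-derivatives of $u,v$ at $y=h_0$.

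Next I would set up the functional-analytic framework. Fix $\alpha\in(0,1)$, pick $T>0$ small, and define a closed bounded convex set $\mathcal{X}_T$ in the Hölder space $C^{\alpha/2,1+\alpha}(\Omega_{h_0,T})\times C^{\alpha/2,1+\alpha}(\Omega_{h_0,T})\times C^{1+\alpha/2}([0,T])$ consisting of triples $(\tilde u,\tilde v,\tilde h)$ that agree with the initial data at $t=0$, satisfy the compatibility conditions, and obey a priori bounds (e.g. $\|\tilde u - u_0\| + \|\tilde v - v_0\| \le 1$, $h_0/2 \le \tilde h \le 2h_0$, $\|\tilde h'\| \le$ some constant determined by the data). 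Given $(\tilde u,\tilde v,\tilde h)\in\mathcal{X}_T$: (i) solve the elliptic problem for $w = w[\tilde u,\tilde v,\tilde h]$, obtaining $w\in C^{0,2+\alpha}$ with $w_x\in C^{0,1+\alpha}$ bounded in terms of $\tilde u,\tilde v$ via elliptic estimates and the Green's function of $\partial_{yy} - \lambda$ on the fixed domain with the appropriate boundary conditions; (ii) freeze the drift coefficient using $w_x$ and solve the two \emph{linear} parabolic problems for $u$ and $v$ on the fixed domain with the given initial data and homogeneous boundary conditions, invoking the standard $L^p$/Schauder theory (Ladyzhenskaya--Solonnikov--Uraltseva) to get $C^{\alpha/2,1+\alpha}$ bounds; (iii) update $h$ by integrating the ODE $h'(t) = -\mu_1(\partial_y u/\tilde h)(t,h_0) - \mu_2(\partial_y v/\tilde h)(t,h_0)$ from $h(0)=h_0$, which gives $h\in C^{1+\alpha/2}$ because the trace $\partial_y u(\cdot,h_0)\in C^{\alpha/2}$. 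This defines a map $\mathcal{F}:(\tilde u,\tilde v,\tilde h)\mapsto (u,v,h)$.

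Then I would verify that, for $T$ sufficiently small (depending only on $h_0$, $\alpha$, $\|u_0\|_{C^2}$, $\|v_0\|_{C^2}$), $\mathcal{F}$ maps $\mathcal{X}_T$ into itself and is a contraction in a suitable weaker norm (e.g. the $C^{0}$ norm on $u,v$ together with $C^1$ on $h$, or the full Hölder norm with a small factor extracted from the short time interval). The self-mapping property uses that over a time interval of length $T$ the parabolic solutions stay $O(T^{\theta})$-close to the initial data in the relevant Hölder norms, and similarly for $h$; the contraction property follows by subtracting the equations for two inputs, using the linearity of the parabolic problems in $u,v$ once the coefficients are frozen, the Lipschitz dependence of $w_x$ on $(\tilde u,\tilde v)$, and the Lipschitz dependence of the straightening coefficients on $\tilde h$. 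Banach's fixed point theorem then yields a unique fixed point, which transforms back to the desired local solution $(u,v,w,h)$ on $\Omega_T$, and the quantitative bound in the statement is exactly the bound defining $\mathcal{X}_T$ together with the elliptic estimate for $w$. Finally, bootstrapping the interior regularity (away from $t=0$ the solution is as smooth as the equations allow) confirms the solution is classical. The main obstacle I anticipate is controlling the chemotactic term $(u w_x)_x = u_x w_x + u w_{xx}$: after straightening, $w_{xx}$ is recovered from the elliptic equation as $w_{xx} = \lambda w - k u - l v$, which is only as regular as $u,v$ themselves, so one must be careful that the drift coefficient multiplying $u_x$ (namely $w_x$) lies in the right Hölder class and that the zeroth-order coefficient involving $w_{xx}$ does not destroy the Schauder estimates — this is where the structure of the parabolic-parabolic-elliptic coupling, and the fact that $w_x$ gains one derivative over $u,v$ while $w_{xx}$ does not, must be used carefully, and it is also the place where keeping the Hölder norms (rather than just $C^0$) in the contraction argument requires the most bookkeeping.
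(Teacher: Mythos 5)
Your proposal is correct and follows essentially the same route as the paper: straighten the free boundary with a cutoff diffeomorphism, set up a map on a closed set of Hölder-continuous triples $(U,V,h)$ by solving the elliptic equation for $w$, the (frozen-coefficient) parabolic equations for $u,v$ via $L^p$/Schauder theory, and the ODE for $h$ from the Stefan condition, and then apply the contraction mapping theorem in a weaker norm for small $T$. The only cosmetic difference is that the paper's auxiliary problem keeps the drift term coupled to the unknown $\overline W$ (making it quasilinear) rather than fully freezing $w_x$ from the input, and it rewrites $(uw_x)_x$ using $w_{xx}=\lambda w-ku-lv$ exactly as you anticipate.
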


\begin{proof}
The lemma  can be proved by the similar arguments as those in
\cite[lemma 3.1]{BaoShen2}. But, due to the presence of two species, nontrivial modifications of the arguments  in
\cite[Lemma 3.1]{BaoShen2}
are needed.
 For the completeness, we provide a proof in the following.

 As in \cite{Chen2000Afree}, we first straighten the free boundary. Let $\zeta(y)$ be a function in $C^3[0,\infty)$ satisfying
\begin{equation*}
 \zeta(y) = 1 \quad \mbox{if}\quad |y - h_0| < \frac{h_0}{4}, \quad \zeta(y) = 0 \quad \mbox{if} \quad |y-h_0| > \frac{h_0}{2}, \quad |\zeta'(y)| <\frac{6}{h_0}\quad  \forall y\ge 0.
\end{equation*}
We introduce a transformation that will straighten the free boundary:
\begin{equation*}
 (t,y) \rightarrow (t,x), \quad\mbox{where}\quad x = y +\zeta(y)(h(t) - h_0),\quad 0 \leq y<\infty.
\end{equation*}
As long as
\begin{equation*}
 |h(t) - h_0| \leq \frac{h_0}{8},
\end{equation*}
the above transformation is a diffeomorphism from { $[0,+\infty)\times [0,h_0]$ onto $[0,+\infty)\times [0,h(t)]$}. Moreover, it changes the free boundary $x = h(t)$ to the fixed boundary $y = h_0$. One easily computes that
\begin{eqnarray*}
 \frac{\partial y}{\partial x} &=& \frac{1}{1 + \zeta'(y)[h(t) - h_0]} \equiv \sqrt{A(h(t),y)},
 \\
  \frac{\partial^2 y}{\partial x^2} &=& -\frac{1}{[1 + \zeta'(y)(h(t) - h_0)]^3} \equiv B(h(t),y),
  \\
  -\frac{1}{h'(t)} \frac{\partial y}{\partial t} &=& \frac{\zeta(y)}{1 + \zeta'(y)(h(t) - h_0)} \equiv C(h(t),y).
\end{eqnarray*}
Defining
\begin{eqnarray*}
 u(t,x) = u(t,y +\zeta(y)(h(t) - h_0)) = U (t,y),
 \\
 v(t,x) = v(t,y +\zeta(y)(h(t) - h_0)) = V (t,y),
 \\
 w(t,x) = w(t,y +\zeta(y)(h(t) - h_0)) = W (t,y),
\end{eqnarray*}
then
\begin{equation*}
 u_t = U_t - h'(t) C(h(t),y) U_y,\quad u_x = \sqrt{A(h(t),y) }U_y,
\end{equation*}
\begin{equation*}
 u_{xx} = A(h(t),y) U_{yy} + B(h(t),y) U_y.
\end{equation*}

Hence the free boundary problem \eqref{one-free-boundary-eq-0} becomes
\begin{eqnarray}\label{new-FBP2}
\begin{cases}
U_t =  AU_{yy} +  (B + h'C)U_y - \chi_1  A U_y  W_{y} +
\\
\qquad + U [1- \lambda\chi_1W + (\chi_1k -1) U + (\chi_1 l- a)V], & y\in (0,h_0)
\\
 V_t =  dAV_{yy} + (dB + h'C)V_y - \chi_2  A V_y  W_{y} +
\\
\qquad + V [r- \lambda\chi_2W + (\chi_2 k- rb)U + (\chi_2l -r)V], & y\in (0,h_0)
 \\
 0 = AW_{yy} + BW_{y} + kU + lV - \lambda W, & y\in (0,h_0)
 \\
 h'(t) = -\mu_1 U_y(t,h_0) - \mu_2 V_y(t,h_0)
\\
U_y (t,0)= V_{y}(t,0)= W_{y}(t,0)=0
\\
 U(t,h_0)=V(t,h_0)= W_{y}(t,h_0) = 0,
 \\
h(0) = h_0, U(0,y) = U_0(y), V(0,y) = V_0(y),\hspace{0.5em} y\in [0,h_0],
\end{cases}
\end{eqnarray}
where $A = A(h(t),y), B = B(h(t),y), C = C(h(t),y), U_0(y) = u_0(x),$ and $V_0(y) = v_0(x)$.

Next, denote $h_0^1 = -\mu_1 U_0'(h_0) - \mu_2 V_0'(h_0) > 0$, and for $0<T\leq [\frac{h_0}{8(1+h_0^1)}]$, define $\Delta_T = [0,T]\times [0,h_0]$,
\begin{eqnarray*}
 \mathcal{D}_{1T} &=& \{U \in C(\Delta_T)\,|\, U(0,y) = u_0(y), \|U - u_0\|_{C(\Delta_T)} \leq 1\},
 \\
  \mathcal{D}_{2T} &=& \{V \in C(\Delta_T)\,|\, V(0,y) = v_0(y), \|V - v_0\|_{C(\Delta_T)} \leq 1\},
  \\
 \mathcal{D}_{3T} &=& \{h \in C^{1}([0,T])\,|\,  h(0) = h_0, h'(0)=h_0^1,  \|h' - h_0^1\|_{C([0,T])
 } \leq 1\}.
\end{eqnarray*}
It is easily seen that $\mathcal{D}:= \mathcal{D}_{1T}\times\mathcal{D}_{2T}\times\mathcal{D}_{3T}$ is a complete metric space with the metric
\begin{equation*}
 d((U_1, V_1, h_1), (U_2, V_2, h_2)) = \|U_1 - U_2\|_{C(\Delta_T)} +  \|V_1 - V_2\|_{C(\Delta_T)} + \| h_1' - h_2'\|_{C([0,T])}.
\end{equation*}
Let us note that for $h_1,h_2 \in \mathcal{D}_{3T},$ due to $h_1(0) = h_2(0) = h_0$,
\begin{equation}\label{Est-h}
 \| h_1 - h_2\|_{C([0,T])} \leq T\|h_1' - h_2'\|_{C([0,T])}.
\end{equation}
We shall prove the existence and uniqueness result by using the contraction mapping theorem.

 To this end, we  first observe that, due to the choice of $T$, for any given $(U, V ,h) \in \mathcal{D}_{1T} \times\mathcal{D}_{2T}\times\mathcal{D}_{3T},$ we have
\begin{equation*}
 |h(t) - h_0|\leq T(1 + h_0^1) \leq \frac{h_0}{8}.
\end{equation*}
Therefore the transformation $(t,y)\rightarrow(t,x)$ introduced at the beginning of the proof is well defined. Applying standard $L^p$ theory, existence theorem (see semigroup approach developed in \cite{Amann1995linear} (Theorem 5.2.1)) and the Sobolev imbedding theorem \cite{Ladyzenskaja1968linear}, we find that for any $(U,V, h)\in \mathcal{D}$, the following initial boundary value problem
\begin{eqnarray}\label{new-FBP3}
\begin{cases}
\overline{U}_t =  A\overline{U}_{yy} + (B + h'C)\overline{U}_y - \chi_1  A \overline{U}_y  \overline{W}_{y} +
\\
\qquad + U [1- \lambda\chi_1\overline{W} + (\chi_1k -1) U + (\chi_1 l- a)V], & y\in (0,h_0)
\\
 \overline{V}_t =  dA\overline{V}_{yy} + (dB + h'C)\overline{V}_y - \chi_2  A \overline{V}_y  \overline{W}_{y} +
\\
\qquad + V [r- \lambda\chi_2\overline{W} + (\chi_2k -rb)U + (\chi_2 l- r)V], & y\in (0,h_0)
 \\
 0 = A\overline{W}_{yy} + B\overline{W}_{y} + kU + lV - \lambda \overline{W}, & y\in (0,h_0)
 \\
\overline{U}_y (t,0)= \overline{V}_{y}(t,0)= \overline{W}_{y}(t,0)=0
\\
 \overline{U}(t,h_0)=\overline{V}(t,h_0)= \overline{W}_{y}(t,h_0) = 0,
 \\
 \overline{U}(0,y) = U_0(y), \overline{V}(0,y) = V_0(y),\hspace{0.5em} y\in [0,h_0]
\end{cases}
\end{eqnarray}
admits a unique solution $(\overline{U}, \overline{V},\overline{W})\in C^{\alpha/2,1+\alpha}(\Delta_T)\times C^{\alpha/2,1+\alpha}(\Delta_T)\times C^{0,2+\alpha}(\Delta_T)$, and
\begin{equation}\label{Est-Omega}
 \|\overline{U}\|_{C^{\alpha/2,1+\alpha}(\Delta_T)}\leq C_1,\quad \|\overline{V}\|_{C^{\alpha/2,1+\alpha}(\Delta_T)}\leq C_2,
\end{equation}
where $C_1, C_2$ are constants depending  on $h_0, \alpha,$ and $\|u_0\|_{C^2[0,h]}$, $\|v_0\|_{C^2[0,h]}$.

Defining
\begin{equation}\label{Eq-h}
 \overline{h}(t) = h_0 - \int_0^t [\mu_1 \overline{U}_y(\tau,h_0) + \mu_2 \overline{V}_y(\tau,h_0)]d\tau,
\end{equation}
we have
\begin{equation}
\label{new-eq-h}
 \overline{h}'(t) = -\mu_1 \overline{U}_y(t,h_0) - \mu_2 \overline{V}_y(t,h_0),\quad \overline{h}(0) = h_0,
 \end{equation}
 and
 \begin{equation*}
  \overline{h}'(0) = -\mu_1\overline{U}_y(0,h_0)-\mu_2\overline{V}_y(0,h_0) = h_0^1,
\end{equation*}
and hence $\overline{h}'\in C^{\alpha/2}([0,T])$ with
\begin{equation}\label{Est-h-1}
 \|\overline{h}'\|_{C^{\alpha/2}([0,T])} \leq C_3 := \mu_1 C_1 + \mu_2 C_2.
\end{equation}
Define $\mathcal{F}: \mathcal{D} \rightarrow C(\Delta_T)\times C(\Delta_T)\times C^1([0,T])$ by
\begin{equation*}
 \mathcal{F}(U,V,h) = (\overline{U},\overline{V},\overline{h}).
\end{equation*}
Clearly $(U,V,h)\in \mathcal{D}$ is a fixed point of $\mathcal{F}$ if and only if it solves \eqref{new-FBP3}+\eqref{new-eq-h}.

Using the $L^p$ estimates for elliptic and parabolic equations and Sobolev imbedding theorem as in \cite[Lemma 3.1]{BaoShen2} (we omit the details here),we can prove that $\mathcal{F}$ is a contraction mapping on $\mathcal{D}$ for $T>0$ sufficiently small. It then follows from the contraction mapping theorem that $\mathcal{F}$ has a unique fixed point $(U, V, h)$ in $\mathcal{D}$. In other word, $(U(t,y), V(t,y), W(t,y),h(t))$ is a unique local solution of the problem \eqref{new-FBP2}.
\end{proof}

By Lemma \ref{local-existence-lm} and regular extension arguments,
for any given $h_0 >0$, nonnegative $u_0, v_0 \in C^2([0,h_0])$ with $u_0'(0) = v_0'(0) = 0$, $u_0(h_0) = v_0(h_0) = 0$, and $\alpha \in (0,1)$, there is $T_{\max}(u_0,v_0,h_0)>0$ such that  such that the system \eqref{one-free-boundary-eq-0} has a unique solution $(u(t,x;u_0,v_0,h_0)$, $v(t,x;u_0,v_0,h_0)$, $w(t,x;u_0,v_0,h_0)$, $h(t;u_0,v_0,h_0))$ on
$[0,T_{\max}(u_0,v_0,h_0))$. Moreover, if $T_{\max}(u_0,v_0,h_0)<\infty$, then
$$
\lim_{t \nearrow T_{\max}(u_0,v_0,h_0)} \big(\|u(t,\cdot;u_0,v_0,h_0)\|_{C([0,h(t;u_0,v_0,h_0)])}+\|v(t,\cdot;u_0,v_0,h_0)\|_{
C([0,h(t;u_0,v_0,h_0)])}\big)=\infty.
$$

The second lemma is on the estimates of $w$.

\begin{lem}
\label{apriori-estimate-lm1}
Assume {\bf (H1)} holds. Suppose that $(u(t,x),v(t,x),w(t,x),h(t))$ is a nonnegative solution of \eqref{one-free-boundary-eq-0}
on $[0,T]$ with $u(0,\cdot)=u_0(\cdot)$, $v(0,\cdot)=v_0(\cdot)$, $h(0)=h_0$ which satisfies \eqref{Initial-uv}. Then
$$
\| w(t,\cdot)\|_{C([0,h(t)])}\leq \frac{1}{\lambda}
\|ku(t,\cdot)+lv(t,\cdot)\|_{C([0,h(t)])}\quad \forall\, t\in [0,T].
$$
\end{lem}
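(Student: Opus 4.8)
The plan is to treat, for each fixed $t\in[0,T]$, the third equation of \eqref{one-free-boundary-eq-0} as a one--dimensional linear elliptic problem for $w(t,\cdot)$ on the interval $(0,h(t))$, namely $w_{xx}-\lambda w=-(ku(t,\cdot)+lv(t,\cdot))$ together with the Neumann conditions $w_x(t,0)=w_x(t,h(t))=0$, and to sandwich its solution between the constant sub/supersolutions $\pm M/\lambda$, where $M:=\|ku(t,\cdot)+lv(t,\cdot)\|_{C([0,h(t)])}$. By Lemma~\ref{local-existence-lm} the functions $u(t,\cdot),v(t,\cdot)$ are continuous (in fact $C^{1+\alpha}$) and $w(t,\cdot)\in C^2([0,h(t)])$, so the classical one--dimensional maximum principle is available.

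First I would prove the upper bound $w(t,\cdot)\le M/\lambda$. Setting $\psi:=w(t,\cdot)-M/\lambda$ one has $\psi_{xx}-\lambda\psi=M-(ku+lv)\ge 0$, i.e. $\psi_{xx}\ge\lambda\psi$, with $\psi_x(0)=\psi_x(h(t))=0$. If $\psi$ attained a positive maximum at an interior point $x_0$ then $\psi_{xx}(x_0)\le 0<\lambda\psi(x_0)$, a contradiction; if it attained a positive maximum at an endpoint $x_0\in\{0,h(t)\}$ then, by continuity, $\psi_{xx}\ge\lambda\psi>0$ on a one-sided neighbourhood of $x_0$, so $\psi_x$ is strictly monotone there, and since $\psi_x(x_0)=0$ this forces $\psi$ to strictly exceed $\psi(x_0)$ at nearby interior points, again a contradiction. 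Hence $\psi\le 0$ on $[0,h(t)]$.

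For the lower bound I would use that $u,v\ge 0$, so that $w(t,\cdot)$ satisfies $w_{xx}-\lambda w=-(ku+lv)\le 0$; running the symmetric argument with $-w(t,\cdot)$ (the Neumann conditions and $\lambda>0$ enter in exactly the same way) gives $w(t,\cdot)\ge 0\ge -M/\lambda$. Combining the two bounds yields $\|w(t,\cdot)\|_{C([0,h(t)])}\le M/\lambda$, which is the assertion. An equivalent route is to write $w(t,x)=\int_0^{h(t)}G(x,\xi)(ku+lv)(t,\xi)\,d\xi$, where $G\ge 0$ is the Green function of $-\partial_x^2+\lambda$ on $(0,h(t))$ with Neumann boundary data, and to note that the constant $1/\lambda$ solves $-w_{xx}+\lambda w=1$ with the same boundary conditions, so that $\int_0^{h(t)}G(x,\xi)\,d\xi\equiv 1/\lambda$ and the estimate is immediate. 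The only point that needs a little care — the only place where the bare interior maximum principle is not enough — is the possibility of an extremum of $w$ at $x=0$ or $x=h(t)$; since the boundary conditions there are Neumann and $\lambda>0$, the monotonicity (Hopf-type) argument above disposes of it, so I do not expect any genuine obstacle.
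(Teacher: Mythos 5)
Your proof is correct and follows essentially the same route as the paper: the paper observes that the constant $\frac{1}{\lambda}\|ku(t,\cdot)+lv(t,\cdot)\|_{C([0,h(t)])}$ is a supersolution of the Neumann problem \eqref{new-eq-w} and invokes the comparison principle for elliptic equations, which is exactly your constant super/subsolution argument with the maximum principle (including the Hopf-type treatment of the Neumann endpoints) written out in full.
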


\begin{proof}
Note that $w(t,x)$ is the solution of
\begin{equation}
\label{new-eq-w}
\begin{cases}
w_{xx}-\lambda w+ k u(t,x) + l v(t,x)=0,\quad 0<x<h(t)\cr
w_{x}(t,0)=w_{x}(t,h(t))=0.
\end{cases}
\end{equation}
Note also that $w^+:= \frac{\|ku(t,\cdot)+ l v(t,\cdot)\|_{ C([0,h(t)])}}{\lambda}$ is a super-solution of
\eqref{new-eq-w}. The lemma then follows from the comparison principle for elliptic equations.
\end{proof}

The third lemma is on the boundedness of $(u(t,x;u_0,v_0,h_0)$, $v(t,x;u_0,v_0,h_0)$, $w(t,x;u_0,v_0,h_0)$, $h(t;u_0,v_0,h_0))$ on
$[0,T_{\max}(u_0,v_0,h_0))$.

\begin{lem}[Boundedness]\label{apriori-estimate-lm2}
Assume that ({\bf{H1}}) holds. For given $h_0>0$ and $u_0,v_0 \in C^2([0,h_0])$ satisfying \eqref{Initial-uv}, let $(u(t,x)$, $ v(t,x)$, $ w(t,x), h(t))$ be the  unique bounded global classical solution of \eqref{one-free-boundary-eq-0} on $[0, T_{\max}(u_0,v_0,h_0))$
satisfying  that
\begin{equation*}
\lim_{t\rightarrow 0^+} (\| u(t,\cdot) - \tilde u_0\|_{C([0,h(t)])} +
 \| v(t,\cdot) - \tilde v_0\|_{C([0,h(t)])}+|h(t)-h_0|) = 0,
\end{equation*}
where $\tilde u_0(x)=u_0(x)$ for $x\in [0,h_0)]$ and $\tilde u_0(x)=0$ for $x>h_0$,  and
$\tilde v_0(x)=v_0(x)$ for $x\in [0,h_0)]$ and $\tilde v_0(x)=0$ for $x>h_0$.

Then there is $\widetilde M>0$ such that
\begin{equation}
\label{bound-eq1}
0  <  u(t,x) \leq \overline{M}_1, \,\,  0< v(t,x) \leq \overline{M}_2\quad \forall\, x\in [0,h(t)],\,\, t\in [0,T_{\max}(u_0,v_0,h_0))
\end{equation}
and
\begin{equation}
\label{bound-eq2}
0 < h'(t) \leq \widetilde M \quad t\in (0,T_{\max}(u_0,v_0,h_0)),
\end{equation}
 where
\begin{equation}\label{Up-bound}
\overline{M}_1 = \max\{\|u_0\|_{C([0,h_0])}, \frac{1}{1 - \chi_1 k}\} ,\quad \overline{M}_2 = \max\{\|v_0\|_{C([0,h_0])}, \frac{r}{r - \chi_2 l}\}.
\end{equation}
\end{lem}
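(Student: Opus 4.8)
The plan is to establish the pointwise upper bounds \eqref{bound-eq1} first, then deduce the bound \eqref{bound-eq2} on the free boundary speed from them. For the lower bounds, positivity of $u$ and $v$ on $[0,h(t))$ is immediate from the strong maximum principle for parabolic equations, since the initial data are positive on $[0,h_0)$ and $u,v$ satisfy linear parabolic equations (with the nonlinear and chemotactic terms absorbed into coefficients) with zero Neumann condition at $x=0$ and zero Dirichlet condition at $x=h(t)$; I would phrase this via the Hopf lemma to also get $u_x(t,h(t))<0$, $v_x(t,h(t))<0$, hence $h'(t)>0$.

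For the upper bound on $u$, I would first use Lemma \ref{apriori-estimate-lm1} to control $w$: from $0 = w_{xx} + ku + lv - \lambda w$ we only need that $\|w(t,\cdot)\|_{C([0,h(t)])} \le \frac1\lambda \|ku(t,\cdot)+lv(t,\cdot)\|_{C([0,h(t)])}$. The key idea is that the term $-\chi_1(uw_x)_x = -\chi_1 u_x w_x - \chi_1 u w_{xx}$, and substituting $w_{xx} = \lambda w - ku - lv$ from the elliptic equation turns the $u$-equation into
\begin{equation*}
u_t = u_{xx} - \chi_1 u_x w_x + u\big[1 - \lambda\chi_1 w + (\chi_1 k - 1)u + (\chi_1 l - a)v\big].
\end{equation*}
Since $w \ge 0$, $v \ge 0$, and under \textbf{(H1)} we have $\chi_1 l - a < 0$, the reaction term is bounded above by $u[1 - (1-\chi_1 k)u]$, and $1 - \chi_1 k > 0$ by \textbf{(H1)}. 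Thus the constant $\overline{M}_1 = \max\{\|u_0\|_{C([0,h_0])}, \frac{1}{1-\chi_1 k}\}$ is a supersolution of the (now non-divergence-form) parabolic equation for $u$ on the moving domain, with $\overline{M}_1 \ge u(t,h(t)) = 0$ at the right boundary and $\p_x \overline{M}_1 = 0 \ge u_x$... — more carefully, I would compare on $\{0<x<h(t)\}$ using the comparison principle for parabolic equations on a domain with a moving boundary (the boundary condition $u=0 \le \overline M_1$ at $x=h(t)$ and the Neumann condition at $x=0$ are both compatible), concluding $u(t,x) \le \overline{M}_1$ for all $t \in [0,T_{\max})$. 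The argument for $v$ is identical: substituting $w_{xx}$ into the $v$-equation and using \textbf{(H1)} conditions $rb > k\chi_2$ hence $\chi_2 k - rb < 0$ and $r - l\chi_2 > 0$, one gets $v_t \le dv_{xx} - \chi_2 v_x w_x + v[r - (r-\chi_2 l)v]$, so $\overline{M}_2 = \max\{\|v_0\|_{C([0,h_0])}, \frac{r}{r-\chi_2 l}\}$ is a supersolution.

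Finally, for \eqref{bound-eq2}: $h'(t) = -\mu_1 u_x(t,h(t)) - \mu_2 v_x(t,h(t))$, so I need a uniform lower bound on $u_x(t,h(t))$ and $v_x(t,h(t))$ (both negative). This is the standard barrier construction near the free boundary: on a region $\Omega_M := \{(t,x) : h(t) - M^{-1} < x < h(t)\}$ for suitable $M$, one builds an explicit comparison function of the form $\overline{M}_1[2M(h(t)-x) - M^2(h(t)-x)^2]$ which dominates $u$ on $\Omega_M$ (using the already-established bound $u \le \overline M_1$, the equation, and choosing $M$ large depending only on $\overline M_1, \overline M_2, \chi_1, \|w_x\|_\infty$ — note $\|w_x\|_{C([0,h(t)])}$ is controlled by $\|ku+lv\|_\infty$ via elliptic estimates, uniformly), yielding $-u_x(t,h(t)) \le 2M\overline{M}_1$; similarly $-v_x(t,h(t)) \le 2M'\overline M_2$. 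Then $\widetilde M := 2\mu_1 M \overline M_1 + 2\mu_2 M' \overline M_2$ works. I expect this last barrier step to be the main obstacle, since it requires care that all constants are independent of $T_{\max}$ and that the chemotaxis drift term $\chi_i v_x w_x$ (which involves a first derivative of the solution) does not spoil the comparison — this is handled because $w_x$ is bounded in terms of already-controlled quantities and the drift term has the right sign structure or can be absorbed by enlarging $M$. This mirrors the single-species argument in \cite[Lemma 3.1, proof of boundedness]{BaoShen2}; the only new point is tracking the cross-competition and cross-chemotaxis terms, which the sign conditions in \textbf{(H1)} are precisely designed to handle.
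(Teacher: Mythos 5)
Your proposal is correct and follows essentially the same route as the paper: positivity and $h'>0$ via the strong maximum principle and Hopf lemma; rewriting the chemotaxis term using $w_{xx}=\lambda w-ku-lv$ and invoking the sign conditions in \textbf{(H1)} to obtain the upper bounds (the paper compares with the logistic ODE $u'=u(1+(\chi_1k-1)u)$ rather than with the constant supersolution directly, but these are equivalent); and the same quadratic barrier $C_0[2M_1(h(t)-x)-M_1^2(h(t)-x)^2]$ near the free boundary to bound $-u_x(t,h(t))$ and $-v_x(t,h(t))$. No gaps.
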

\begin{proof}
The strong maximal principle yields that $u >0, v> 0,$ and $w >0$ for $x\in [0,h(t)), t\in [0,T].$ Thus, by Hopf lemma we see from \eqref{one-free-boundary-eq-0} that $u_x(t,h(t)) < 0$ and $v_x(t,h(t)) < 0$ for $t\in(0,T].$ $h'(t) = -\mu_1 u_x(t,h(t)) - \mu_2 v_x(t,h(t)) >0$ for $t\in (0,T]$.

Observe that the first three equations in system \eqref{one-free-boundary-eq-0} can be written in the following form,
\begin{equation}\label{one-free-boundary-eq2}
\begin{cases}
u_t = u_{xx} -\chi_1 u_x  w_{x}  + u[1 - \chi_1\lambda w + (\chi_1 k -1)u + (\chi_1l - a)v], \quad 0<x<h(t)
\\
 v_t =  dv_{xx} - \chi_2 v_x  w_{x} + v(r - \chi_2\lambda w + (\chi_2 k -br)u + (\chi_2l - r)v), \quad 0<x<h(t)
 \\
 0 = w_{xx} + ku + lv -\lambda w,  \quad 0<x<h(t).
\end{cases}
\end{equation}
Let $\overline{u} = \overline{u}(t;\bar u_0)$ be the solution of
\begin{equation}
\label{aux-ode-eq1}
\begin{cases}
u' = u(1+(\chi_1 k -1)u)\cr
u(0)=\bar u_0.
\end{cases}
\end{equation}
By ({\bf{H1}}) and the comparison principle for parabolic equations, we have
\begin{equation*}
u(t,x) \leq \overline{u}(t;\|u_0\|_{C([0,h_0])}) \leq \max\{\|u_0\|_{C([0,h_0])}, \frac{1}{1 - \chi_1 k}\},\quad x\in [0,h(t)],\,\, t\in [0,T_{\max}(u_0,v_0,h_0)).
\end{equation*}
Similarly, let $\overline{v} = \overline{v}(t;\bar v_0)$ be the solution of
\begin{equation}
\label{aux-ode-eq2}
\begin{cases}
v' = v(r+(\chi_2 l -r)v)\cr
v(0)=\bar v_0.
\end{cases}
\end{equation}
we have
\begin{equation*}
v(t,x) \leq \overline{v}(t;\|v_0\|_{C([0,h_0])}) \leq \max\{\|v_0\|_{C([0,h_0])}, \frac{r}{r - \chi_2 l}\},\quad x\in [0,h(t)], \,\,
t\in [0,T_{\max}(u_0,v_0,h_0)).
\end{equation*}
\eqref{bound-eq1} then follows.

Next, we show that \eqref{bound-eq2} holds.
In order to do so, for given $M_1>0$,  define
\begin{equation*}
 \Omega = \Omega_{M_1} :=\{(t,x)\,|\, 0<t<T_{\max}(u_0,v_0,h_0),\,\,  h(t) - M_1^{-1} < x < h(t)\}
\end{equation*}
and construct an auxiliary function
\begin{equation}\label{Eq-aux1}
 \Psi(t,x) = C_0[2M_1(h(t) - x) - M_1^2(h(t) - x)^2],
\end{equation}
where $C_0 = \max\{\|u_0\|_{C([0,h_0])}, \|v_0\|_{C([0,h_0])}, \bar{A}_1,\bar{A}_2\}$ and  $\bar{A}_1,\bar{A}_2$ are as in   \eqref{Equ-A}.
By a direct computation, we have that for $(t,x)\in \Omega$,
\begin{align*}
 \Psi_t &= 2C_0M_1h'(t)(1-M_1(h(t)-x)) \geq 0,
 \\
 \Psi_x &= -2C_0M_1 + 2M_1^2(h(t)-x),
 \\
 -\Psi_{xx} &= 2C_0M_1^2.
\end{align*}
It then follows that
\begin{align}
&\Psi_t-\Psi_{xx} + \chi_1 \Psi_x w_{x}  - u[1 - \chi_1\lambda w + (\chi_1 k -1)u + (\chi_1l- a)v]\nonumber\\
&\geq 2C_0M_1^2 + (\chi_1w_{x})[-2C_0M_1 + 2M_1^2(h(t)-x)]\nonumber
 \\
&\,\,\, - u[1 - \chi_1\lambda w + (\chi_1 k -1)u + (\chi_1l - a)v],\label{Eq-comparison-boundary1}
\end{align}
with $|h(t)-x| \leq M_1^{-1}$. By Lemma \ref{apriori-estimate-lm1},  we can choose $M_1\gg 1$ such that
 that \eqref{Eq-comparison-boundary1} is positive and
$\Psi(0,x)\ge u_0(x)$ for $h_0-\frac{1}{M_1}\le x\le h_0$.
Note that $\Psi(t,h(t))=u(t,h(t))=0$ and $\Psi(t,h(t)-\frac{1}{M_1})=C_0\ge u(t,h(t)-\frac{1}{M_1})$.
 Then by the comparison principle for parabolic equations, we have $u(t,x) \leq \Psi(t,x)$ for $(t,x)\in \Omega$. It then would follow that
 $$
 u_x(t,h(t)) \geq \Psi_x(t,h(t)) = -2M_1C_0\quad \forall\, t\in [0,T_{\max}(u_0,v_0,h_0)),
 $$
 and hence
\begin{equation}\label{Boundary-est-u}
  -\mu_1 u_x(t,h(t)) \leq  \mu_1 2M_1C_0\quad \forall\, t\in [0,T_{\max}(u_0,v_0,h_0)).
\end{equation}
Similarly, we can obtain
\begin{equation}\label{Boundary-est-v}
  -\mu_2 v_x(t,h(t)) \leq  \mu_2 2M_1C_0\quad \forall\, t\in [0,T_{\max}(u_0,v_0,h_0)).
\end{equation}
 \eqref{bound-eq2} then follows from  \eqref{Boundary-est-u} and \eqref{Boundary-est-v}.
\end{proof}

We now prove Theorem \ref{free-boundary-thm1}.

\begin{proof}[Proof of  Theorem \ref{free-boundary-thm1}]
  Assume that {\bf (H1)} holds.
   Suppose that $[0,T_{\max}(u_0,v_0,h_0))$ is the maximal interval of existence of the solution $(u(t,x;u_0,v_0,h_0),$ $v(t,x;u_0,v_0,h_0),$ $w(t,x;u_0,v_0,h_0),$ $h(t;u_0,v_0,h_0))$. Let $h(t)=h(t;u_0,v_0,h_0)$, and
$(u(t,x),v(t,x),$  $w(t,x))=(u(t,x;u_0,v_0,h_0),$ $v(t,x;u_0,v_0,h_0),$ $w(t,x;u_0,v_0,h_0))$. Then $(u,v,w)$ satisfies
\begin{equation}\label{one-free-boundary-eq3}
\begin{cases}
u_t = u_{xx} -\chi_1 u_x  w_{x}  + u[1 - \chi_1\lambda w + (\chi_1 k -1)u + (\chi_1l - a)v], \quad 0<x<h(t)
\\
 v_t =  dv_{xx} - \chi_2 v_2  w_{x} + v[r - \chi_2\lambda w + (\chi_2 k -rb)u + (\chi_2l - r)v], \quad 0<x<h(t)
 \\
 0 = w_{xx} + ku + lv -\lambda w,  \quad 0<x<h(t)
 \\
 h'(t) = -\mu_1 u_x(t,h(t))-\mu_2 v_x(t,h(t)),
\\
u_x(t,0) = v_{x}(t,0) = w_{x}(t,0) = 0,
\\
 u(t,h(t)) = v(t,h(t)) = w_{x}(t,h(t)) = 0,
 \\
 h(0) = h_0,\quad u(0,x) = u_0(x), v(0,x) = v_0(x)\quad  0\leq x\leq h_0.
\end{cases}
\end{equation}

First, by Lemma \ref{apriori-estimate-lm2},
\begin{eqnarray}
&0<h'(t)\leq \widetilde M\quad \forall\, t\in [0,T_{\max}(u_0,v_0,h_0)\\
&u(t,x)\le \overline M_1\quad \forall \, t\in [0,T_{\max}(u_0,v_0,h_0)),\,\, 0\le x\le h(t), \label{aux-new-eq1}
\\
&v(t,x)\le \overline M_2\quad \forall \, t\in [0,T_{\max}(u_0,v_0,h_0)),\,\, 0\le x\le h(t). \label{aux-new-eq2}
\end{eqnarray}
This implies that $T_{\max}(u_0,v_0,h_0)=\infty$.

Next, \eqref{thm1-h-bound1}, \eqref{thm1-eq1}, and \eqref{thm1-eq2} follow directly from Lemma \ref{apriori-estimate-lm2}.
 \eqref{thm1-eq3}, and \eqref{thm1-eq4} follow from that fact that
 $$
 \lim_{t\to\infty} \overline{u}(t;\|u_0\|_{C([0,h_0])})=\bar A_1,\quad \lim_{t\to\infty}\overline{v}(t;\|v_0\|_{C([0,h_0])})=\bar A_2.
 $$
  The theorem is thus proved.
\end{proof}

\section{Vanishing-spreading dichotomy in the generalized sense}

In this section, we study the spreading and vanishing scenarios for  \eqref{one-free-boundary-eq-0} in the generalized sense and prove Theorem \ref{free-boundary-thm2}.
Throughout this section,  $(u(\cdot,\cdot;u_0,v_0,h_0)$, $v(\cdot,\cdot;u_0,v_0,h_0)$, $w(\cdot,\cdot;u_0,v_0,h_0)$, $h(t;u_0,v_0,h_0))$ denotes the classical solution of \eqref{one-free-boundary-eq-0} with $u(0,\cdot;u_0,v_0,h_0)=u_0(\cdot)$, $v(0,\cdot;u_0,v_0,h_0)=v_0(\cdot)$,
 and $h(0;u_0,v_0,h_0)=h_0$, where   $h_0>0$ and $u_0(\cdot), v_0(\cdot)$ satisfy \eqref{Initial-uv}.
 We first present two lemmas in subsection 4.1 and then prove Theorem \ref{free-boundary-thm2} (1) and (2) in subsections 4.2 and 4.3, respectively.

 \subsection{Lemmas}

 In this subsection, present two lemmas to be used in the proof of Theorem \ref{free-boundary-thm2}.

\begin{lem}
\label{thm2-lm1}
Assume that {\bf (H1)} holds.
 For any given $h_0>0$ and $u_0(\cdot), v_0(\cdot)$ satisfying \eqref{Initial-uv},
$$\|\partial_{x}w(t,\cdot;u_0,v_0)\|_{ C([0,h(t;u_0,v_0,h_0)])} \leq \frac{1 }{2\sqrt{\lambda}}
\|ku(t,\cdot,;u_0,v_0,h_0)+lv(t,\cdot,;u_0,v_0,h_0)\|_{ C([0,h(t;u_0,v_0,h_0)])}
$$
for every   $t\geq 0$.
\end{lem}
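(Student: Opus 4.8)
The plan is to solve the elliptic equation \eqref{new-eq-w} for $w(t,\cdot)$ explicitly via its Green's function on $[0,h(t)]$ with Neumann boundary conditions, differentiate the resulting formula, and then use the sign of the source term $f:=ku+lv\ge 0$ to extract the sharp constant $\tfrac{1}{2\sqrt\lambda}$.

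Concretely, fix $t\ge 0$ and write $h=h(t;u_0,v_0,h_0)$, $\mu=\sqrt\lambda$, $f(x)=ku(t,x;u_0,v_0,h_0)+lv(t,x;u_0,v_0,h_0)\ge 0$, and $M=\|f\|_{C([0,h])}$. A direct verification shows that the unique solution of $w_{xx}-\lambda w+f=0$, $w_x(t,0)=w_x(t,h)=0$, is
$$
w(x)=\frac{1}{\mu\sinh(\mu h)}\Big[\cosh(\mu(h-x))\!\int_0^x\!\cosh(\mu y)f(y)\,dy+\cosh(\mu x)\!\int_x^h\!\cosh(\mu(h-y))f(y)\,dy\Big].
$$
Differentiating, the boundary terms coming from the two integrals cancel, leaving
$$
w_x(x)=\frac{1}{\sinh(\mu h)}\Big[\sinh(\mu x)\!\int_x^h\!\cosh(\mu(h-y))f(y)\,dy-\sinh(\mu(h-x))\!\int_0^x\!\cosh(\mu y)f(y)\,dy\Big],
$$
from which the Neumann conditions are also transparent.

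The key point is that, since $f\ge 0$ and $\sinh(\mu x),\sinh(\mu(h-x)),\cosh(\cdot)\ge 0$ on $[0,h]$, the two contributions to $w_x(x)$ have opposite signs, so $|w_x(x)|$ is bounded by the larger of the two rather than by their sum — this is precisely what produces the factor $\tfrac12$. Bounding each integrand by $M$ and computing $\int_0^x\cosh(\mu y)\,dy=\mu^{-1}\sinh(\mu x)$ and $\int_x^h\cosh(\mu(h-y))\,dy=\mu^{-1}\sinh(\mu(h-x))$ gives $|w_x(x)|\le \frac{M}{\mu}\cdot\frac{\sinh(\mu x)\sinh(\mu(h-x))}{\sinh(\mu h)}$. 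Finally the product-to-sum identity $2\sinh(\mu x)\sinh(\mu(h-x))=\cosh(\mu h)-\cosh(\mu(h-2x))\le\cosh(\mu h)-1$, together with $\frac{\cosh(\mu h)-1}{\sinh(\mu h)}=\tanh(\mu h/2)\le 1$, yields $|w_x(x)|\le \frac{M}{2\mu}$ for every $x\in[0,h]$, which is the claimed estimate.

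There is no genuine obstacle here; the only point requiring care is to retain the sign information in the formula for $w_x(x)$ instead of applying the triangle inequality directly, which would only give the weaker constant $\tfrac{1}{\sqrt\lambda}$. As an alternative to writing the Green's function down, one may set $z=w_x$, note that $z''-\lambda z=-f'$ (legitimate since $u,v$ are $C^1$ in $x$) with $z(0)=z(h)=0$, represent $z$ through the Dirichlet Green's function and integrate by parts; this reproduces the same formula for $w_x$ and hence the same bound.
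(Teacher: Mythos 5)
Your proof is correct, and it takes a genuinely different route from the paper's. The paper does not work on the interval at all: it extends $u$ and $v$ evenly and $2h$-periodically to the whole real line, checks that the resulting whole-line solution of $0=\tilde w_{xx}-\lambda \tilde w+k\tilde u+l\tilde v$ inherits the symmetries forcing the Neumann conditions at $x=0$ and $x=h(t)$, writes $w$ via the heat-kernel representation of $(\lambda-\partial_{xx})^{-1}$ on $\RR$, and then quotes \cite[Lemma 4.1]{Salako2017Global} for the bound $\|w_x\|_\infty\le \frac{1}{2\sqrt\lambda}\|k\tilde u+l\tilde v\|_\infty$. Your argument replaces all of this with the explicit Neumann Green's function on $[0,h]$; I checked the formula for $w$, the cancellation of the boundary terms in $w_x$, and the elementary identities $2\sinh(\mu x)\sinh(\mu(h-x))=\cosh(\mu h)-\cosh(\mu(h-2x))$ and $\frac{\cosh(\mu h)-1}{\sinh(\mu h)}=\tanh(\mu h/2)\le 1$, and everything is in order. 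Notably, the essential mechanism is the same in both proofs: since $f=ku+lv\ge 0$, the derivative $w_x(x)$ splits into a nonnegative contribution from the mass to the right of $x$ minus one from the mass to the left, so $|w_x|$ is controlled by the larger of the two rather than their sum, which is exactly where the factor $\frac12$ comes from (the cited lemma of Salako--Shen exploits the same sign structure for the kernel $e^{-\sqrt\lambda|x-z|}$). What each approach buys: yours is self-contained and avoids importing an external lemma; the paper's reflection construction, identity \eqref{thm2-eq1}, is reused immediately afterwards in Lemma \ref{thm2-lm2} to get localized interior estimates via \cite{SaShXu}, so the extension does double duty there. Your closing alternative (working with $z=w_x$ and the Dirichlet problem $z''-\lambda z=-f'$) needs $f$ to be $C^1$ in $x$, which holds for $t>0$ by parabolic regularity and at $t=0$ by \eqref{Initial-uv}, so that variant is also fine, though unnecessary given the main computation.
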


\begin{proof}
First, define $\tilde u^+(t,x)$ for $x\ge 0$ successfully by
$$
\tilde u^+(t,x)=u(t,x;u_0,v_0,h_0)\quad {\rm for}\quad 0\le x\le h(t;u_0,v_0,h_0),
$$
$$
\tilde u^+(t,x)=\tilde u^+(t,2h(t;u_0,v_0,h_0)-x)\quad {\rm for}\quad h(t;u_0,v_0,h_0)<x\le 2h(t;u_0,v_0,h_0),
$$
and
$$
\tilde u^+(t,x)=\tilde u^+(t,2k h(t;u_0,v_0,h_0)-x)\quad {\rm for}\quad kh(t;u_0,v_0,h_0)<x\le (k+1) h(t;u_0,v_0,h_0)
$$
for $k=1,2,\cdots$. Define $\tilde u(t,x)$ for $x\in\RR$ by
$$
\tilde u(t,x)=\tilde u^+(t,|x|)\quad {\rm for}\quad x\in\RR.
$$
It is clear that
$$
\tilde u(t,x)=\tilde u(t,-x)\quad \forall\,\, x\in\RR,
$$
and
$$
\tilde u(t,2h(t;u_0,v_0,h_0)-x)=\tilde u(t,x)\quad \forall \,\, x\in\RR.
$$

Similarly, we can define $\tilde v^+(t,x)$ for $x\ge 0$ successfully as in $\tilde u^+(t,x)$ for $x\ge 0$. Then define $\tilde v(t,x)$ for $x\in\RR$ by
$$
\tilde v(t,x)=\tilde v^+(t,|x|)\quad {\rm for}\quad x\in\RR.
$$

Next, let $\tilde w(t,x)$ be the solution of
$$
0=\tilde w_{xx}(t,x)-\lambda \tilde w(t,x)+ k\tilde u(t,x) + l\tilde v(t,x),\quad x\in\RR.
$$
Then
\begin{align*}
0&=\tilde w_{xx}(t,-x)-\lambda \tilde w(t,-x)+ k\tilde u(t,-x)+ l\tilde v(t,-x)\\
&=\tilde w_{xx}(t,-x)-\lambda \tilde w(t,-x)+ k\tilde u(t,x)+ l\tilde v(t,x)),\quad x\in\RR
\end{align*}
and
\begin{align*}
0&=\tilde w_{xx}(t,2h(t;u_0,v_0,h_0)-x)-\lambda \tilde w(t,2h(t;u_0,v_0,h_0)-x)
\\
&\quad +k \tilde u(t,2h(t;u_0,v_0,h_0)-x) + l\tilde v(t,2h(t;u_0,v_0,h_0)-x)\\
&=\tilde w_{xx}(t,2h(t;u_0,v_0,h_0)-x)-\lambda \tilde w(t,2h(t;u_0,v_0,h_0)-x)+k \tilde u(t,x) +l \tilde v(t,x),\quad x\in\RR.
\end{align*}
It then follows that
$$
\tilde w(t,x)=\tilde w(t,-x)=\tilde w(t,2h(t;u_0,v_0,x_0)-x)\quad \forall \,\, x\in\RR.
$$
This implies that
$$
\begin{cases}
0=\tilde w_{xx}-\lambda \tilde w+k u(t,x;u_0,v_0,h_0)+l v(t,x;u_0,v_0,h_0)\quad 0<x<h(t;u_0,v_0,h_0)\cr
\tilde w_{x}(t,0)=\tilde w_{x}(t,h(t;u_0,v_0,h_0))=0.
\end{cases}
$$
Therefore,
$$
w(t,x;u_0,v_0,h_0)=\tilde w(t,x)\quad \forall\,\, 0\le x\le h(t;u_0,v_0,h_0)
$$
and
\begin{equation}
\label{thm2-eq1}
w(t,x;u_0,v_0,h_0)=\frac{1}{2\sqrt \pi}\int_0^{\infty}\int_{-\infty}^\infty \frac{e^{-\lambda s}}{\sqrt s}e^{-\frac{|x-z|^{2}}{4s}}
\big(k \tilde u(t,z)+l \tilde v(t,z)\big)dzds
\end{equation}
for $0\le x\le h(t;u_0,v_0,h_0)$.
The lemma then follows from  \cite[Lemma 4.1]{Salako2017Global}.
\end{proof}

\begin{lem}
\label{thm2-lm2}
Assume that {\bf (H1)} holds.
 Let $h_0>0$ and $u_0(\cdot),v_0(\cdot)$ satisfy \eqref{Initial-uv}.
For every $R\gg 1$, there are $C_R\gg 1$ and $\varepsilon_R>0$ such that for any $t>0$ with $h(t;u_0,v_0,h_0)>R$,  there holds
 \begin{align}\label{nnew-eq3}
&\|w_{x}(t,\cdot;u_0,v_0,h_0)\|_{C([0,\frac{R}{2}])} + \|\lambda w(t,\cdot;u_0,v_0,h_0)\|_{C([0,\frac{R}{2}])}\nonumber\\
&\leq C_{R}\|ku(t,\cdot;u_0,v_0,h_0)+lv(t,\cdot;u_0,v_0,h_0)\|_{C([0,R])}\nonumber \\
&\quad +\varepsilon_R \max\{\|u_0\|_{C([0,h_0])},\|v_0\|_{C([0,h_0])},\bar{A}_1,\bar{A}_2\},
\end{align}
with $\lim_{R\to\infty}\varepsilon_R=0$,  where $\bar{A}_i$ $(i=1,2)$  is as in \eqref{Equ-A}.
Furthermore, for any $t>0$ with $h(t;u_0,v_0,h_0)>2R$ and any $x_0\in (R,h(t;u_0,v_0,h_0)-R)$,  there holds
 \begin{align}\label{aux-thm3-eq1}
&| w_{x}(t,\cdot;u_0,v_0,h_0)|_{C([x_0-\frac{R}{2},x_0+\frac{R}{2}])} + |\lambda w(t,\cdot;u_0,v_0,h_0)|_{C([x_0-\frac{R}{2},x_0+\frac{R}{2}])}\nonumber\\
&\leq C_{R}\|ku(t,\cdot;u_0,v_0,h_0)+lv(t,\cdot;u_0,v_0,h_0)\|_{C([x_0-R,x_0+R])}
\nonumber \\
&\quad +\varepsilon_R \max\{\|u_0\|_{C([0,h_0])},\|v_0\|_{C([0,h_0])},\bar{A}_1,\bar{A}_2\}.
\end{align}
\end{lem}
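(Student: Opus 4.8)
The plan is to exploit the explicit representation \eqref{thm2-eq1} for $w$ (and its reflected/periodized extension $\tilde w$) that was established in Lemma~\ref{thm2-lm1}, and to localize it: the value of $w$ and $w_x$ near a point $x_0$ is governed by the source $k\tilde u+l\tilde v$ near $x_0$, up to an exponentially small tail coming from the Gaussian-times-$e^{-\lambda s}$ kernel. Concretely, for the first estimate I would fix $x\in[0,R/2]$ and split the $z$-integral in \eqref{thm2-eq1} (together with the corresponding formula for $w_x$, obtained by differentiating the kernel in $x$) into the region $|z|\le R$ and the region $|z|>R$. On $|z|\le R$ the integrand is bounded by the kernel times $\|ku+lv\|_{C([0,R])}$ (using that $\tilde u(t,\cdot)$ restricted to $[0,h(t)]$ equals $u(t,\cdot)$ and is obtained elsewhere by reflection, so its sup over $[-R,R]$ is $\le\|u(t,\cdot)\|_{C([0,R])}$ once $h(t)>R$), and integrating the kernel over all of $\R\times(0,\infty)$ gives a finite constant $C_R$ (in fact one can take $C_R$ independent of $R$, but it does no harm to allow dependence). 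On $|z|>R$, since $x\le R/2$ we have $|x-z|\ge R/2$, so $e^{-|x-z|^2/4s}\le e^{-R^2/(16 s)}$, and together with $e^{-\lambda s}$ and the global $L^\infty$ bounds \eqref{thm1-eq1}--\eqref{thm1-eq2} on $u,v$ (hence on $\tilde u,\tilde v$) this contributes at most $\varepsilon_R\max\{\|u_0\|_{C([0,h_0])},\|v_0\|_{C([0,h_0])},\bar A_1,\bar A_2\}$ where $\varepsilon_R=C\int_0^\infty \frac{e^{-\lambda s}}{\sqrt s}\,e^{-R^2/(16s)}\,ds\to 0$ as $R\to\infty$ (this is a standard Laplace-type integral; one can even get the explicit rate $e^{-\sqrt\lambda R/2}$ up to polynomial factors). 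The same splitting applied to the kernel $\partial_x\!\big(e^{-|x-z|^2/4s}\big)=-\frac{x-z}{2s}e^{-|x-z|^2/4s}$ handles $w_x$; the extra factor $|x-z|/2s$ is absorbed because $\frac{|x-z|}{2s}e^{-|x-z|^2/8s}$ is bounded and the remaining $e^{-|x-z|^2/8s}$ still yields convergent integrals of the same type.

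For the second estimate \eqref{aux-thm3-eq1} the argument is essentially identical but centered at $x_0$ instead of $0$: for $x\in[x_0-\tfrac R2,x_0+\tfrac R2]$ split the $z$-integral into $|z-x_0|\le R$ and $|z-x_0|>R$; on the inner part $|x-z|$ is unrestricted but $z$ ranges over $[x_0-R,x_0+R]\subset(0,h(t))$ when $h(t)>2R$ and $x_0\in(R,h(t)-R)$, so $\tilde u(t,z)=u(t,z)$ and $\tilde v(t,z)=v(t,z)$ there and we get the term $C_R\|ku+lv\|_{C([x_0-R,x_0+R])}$; on the outer part $|x-z|\ge R/2$, giving the same exponentially small $\varepsilon_R$-tail from the global bounds. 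Note the interior location of $x_0$ is exactly what guarantees the window $[x_0-R,x_0+R]$ stays inside $[0,h(t)]$, so no reflection artifacts enter the main term.

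The only genuinely delicate point is bookkeeping the reflected extension $\tilde u,\tilde v$: one must check that the periodized/reflected functions inherit the uniform $L^\infty$ bound $\max\{\|u_0\|_{C([0,h_0])},\|v_0\|_{C([0,h_0])},\bar A_1,\bar A_2\}$ (immediate from Theorem~\ref{free-boundary-thm1}, since reflection and translation preserve sup-norms), and that $\tilde u(t,\cdot)$ agrees with $u(t,\cdot)$ on the relevant windows $[-R,R]$, resp.\ $[x_0-R,x_0+R]$ — the latter only when $h(t)$ is large enough relative to $R$, which is precisely the hypothesis imposed. Everything else reduces to the one-variable estimate $\int_0^\infty \frac{e^{-\lambda s}}{\sqrt s}\,e^{-a^2/s}\,ds \lesssim e^{-2\sqrt\lambda\,a}$ for $a>0$ (and its version with an extra factor $a/s$), which is classical and can be cited or proved in two lines. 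I would state the needed kernel estimates as a short sublemma, then apply them twice with the two choices of center; the constant $C_R$ and the function $\varepsilon_R$ produced are manifestly independent of $u_0,v_0,h_0$ and of $t$, as required.
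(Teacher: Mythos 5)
Your argument is correct and follows essentially the same route as the paper: the paper's proof of this lemma is a one-line appeal to the representation formula \eqref{thm2-eq1} together with the localized kernel estimate of \cite[Lemma 2.5]{SaShXu}, and your near/far splitting of the $z$-integral (local sup-norm of $k\tilde u+l\tilde v$ times an integrable kernel on $|z-x_0|\le R$, versus the global bound times the exponentially small tail $\int_0^\infty \frac{e^{-\lambda s}}{\sqrt s}e^{-cR^2/s}\,ds$ on the complement) is precisely the content of that cited lemma. The only point worth making explicit in a write-up is the reflection bookkeeping you already flag, namely that $\tilde u,\tilde v$ coincide with $u,v$ on the relevant window only because $h(t)>R$ (resp.\ $h(t)>2R$ and $x_0\in(R,h(t)-R)$), which your hypotheses supply.
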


\begin{proof}
It follows from \eqref{thm2-eq1} and  \cite[Lemma 2.5]{SaShXu}.
\end{proof}

\subsection{Proof of Theorem \ref{free-boundary-thm2} (1)}

 Note that under assumption {\bf (H1)} we have either $h_\infty<\infty$ or $h_\infty=\infty$. To prove Theorem \ref{free-boundary-thm2},  it then suffices to prove that  $h_\infty<\infty$ and $h_\infty=\infty$
  imply Theorem \ref{free-boundary-thm2} (1) and (2), respectively.
  In this subsection, we prove Theorem \ref{free-boundary-thm2} (1).

\begin{proof}[Proof of Theorem \ref{free-boundary-thm2} (1)]
Suppose that $h_\infty <\infty$. We prove that $h_{\infty}(u_0,v_0,h_0) \leq l^*$.

 To this end, we first claim that $h'(t;u_0,v_0,h_0)\rightarrow 0$ as $t\rightarrow\infty$. Assume that the claim is not true. Then there is $t_n\rightarrow\infty$  $(t_n\geq 2)$ such that $\lim_{n\to\infty} h'(t_n;u_0,v_0,h_0) >0$. Let $h_n(t) = h(t+t_n;u_0,v_0,h_0)$ for $t \geq -1$.
 By the arguments of Theorem \ref{free-boundary-thm1},
 $\{h'_n(t)\}$ is uniformly bounded on $[-1,\infty)$, and then by the arguments of Lemma \ref{local-existence-lm}, $\{h'_n(t)\}$ is equicontinuous on $[-1,\infty)$.
  We may then assume that there is a continuous function $L^*(t)$ such that $h'_n(t)\to L^*(t)$ as $n\to\infty$ uniformly in $t$ in bounded sets of $[-1,\infty).$ It then follows that $L^*(t) =\frac{dh_\infty}{dt} \equiv 0$ and then $\lim_{n\to\infty} h'(t_n;u_0,v_0,h_0) =0$, which is a contradiction. Hence the claim holds.

Second, we show that if $h_\infty<\infty$, then
\begin{equation*}
\lim_{t\to\infty} \|u(t,\cdot;u_0,v_0,h_0)\|_{C([0,h(t)])} = \lim_{t\to\infty} \|v(t,\cdot;u_0,v_0,h_0)\|_{C([0,h(t)])}=0.
\end{equation*}
Assume that this is not true. Then by a priori estimates for parabolic equations,  there are $t_n\to \infty$ and $(u^*(t,x),v^*(t,x)) \neq (0,0)$ such that $\|u(t+t_n,\cdot;u_0, v_0,h_0) - u^*(t,\cdot)\|_{C([0,h(t+t_n)]} \to 0$ and  $\|v(t+t_n,\cdot;u_0,v_0,h_0) - v^*(t,\cdot)\|_{C([0,h(t+t_n)]} \to 0$ as $t_n\to \infty$ . Without loss of generality, we may assume that $u^*(t,x) \neq 0$. Then by the Hopf Lemma for parabolic equations, we have $u^*_x(t,h_\infty) < 0$. This implies that
\begin{eqnarray*}
 \lim_{n\to\infty} h'(t_n) &=& - \lim_{n\to\infty} [\mu_1 u_x(t_n,h(t_n);u_0,v_0,h_0) +\mu_2 v_x(t_n,h(t_n);u_0,v_0,h_0)]
 \\
 &=& -\mu_1 u^*_x(0,h_\infty) -\mu_2 v^*_x(0,h_\infty) \geq -\mu_1 u^*_x(0,h_\infty)>0,
\end{eqnarray*}
which is a contradiction again. Hence
$$
\lim_{t\to\infty} \|u(t,\cdot;u_0,v_0,h_0)\|_{C([0,h(t)])} = \lim_{t\to\infty}\|v(t,\cdot;u_0,v_0,h_0)\|_{C([0,h(t)])} =0.
$$

Third, we show  that $h_\infty <\infty$ implies $\max\{ \lambda_p(1,1,h_\infty), \lambda_p(d,r,h_\infty)\}\le 0$, which is equivalent to $h_\infty \leq l^*:=\min\{l_{1,1}^*,l_{d,r}^*\}$
{ (see \eqref{Eq-PLE1} for the definition of $\lambda_p(1,1,h_\infty)$ and $\lambda_p(1,r,h_\infty)$)}.
 Assume that $l_{1,1}^*\leq l_{d,r}^*$ and $h_\infty \in (l^*,\infty).$ Then, for any $\epsilon>0$,
 there exists $\widetilde{T} >0$ such that $h(t;u_0,v_0,h_0) > h_\infty -\epsilon > l^*$ and $$\max\{\|u(t,\cdot;u_0,v_0,h_0)\|_{C([0,h(t;u_0,v_0,h_0)])},\|v(t,\cdot;u_0,v_0,h_0)\|_{C([0,h(t;u_0,v_0,h_0)])}\}<\epsilon$$
  for all $t \geq \widetilde{T}$.
 Note that $u(t,x;u_0,v_0,h_0),v(t,x;u_0,v_0,h_0)$ satisfies
 \begin{equation}
 \label{aux-spreading-eq1}
 \begin{cases}
 u_t = u_{xx} -\chi_1  u_x  w_{x}+  u(1-\chi_1\lambda w + (\chi_1k-1)u +(\chi_1l-a)v),& \mbox{in}\quad (0,h_\infty-\epsilon)
\\
v_t = dv_{xx} -\chi_2  v_x  w_{x}+  v(r-\chi_2\lambda w + (\chi_2k-br)u +(\chi_2l-r)v),& \mbox{in}\quad (0,h_\infty-\epsilon)
\\
u_x(t,0)=0,\,\,v_x(t,0)=0,\,\, u(t,h_\infty-\epsilon)>0,\,\, v(t,h_\infty-\epsilon)> 0
 \end{cases}
 \end{equation}
 for $t\ge \tilde T$,
 where $w(t,x)=w(t,x;u_0,v_0,h_0)$.

Consider
\begin{eqnarray}\label{Cutoff-Eq}
\begin{cases}
U_t = U_{xx} +\beta(t,x)  U_x +  U( a_0(t,x)+ a_1U +  a_2v ),& \mbox{in}\quad (0,h_\infty - \epsilon)
\\
U_x(t,0) =0,\,\, U(t, h_\infty -\epsilon)= 0
 \\
 U(\tilde T,x)=\tilde u_0(x),
\end{cases}
\end{eqnarray}
where
$$
\begin{cases}
\beta(t,x)=-\chi_1 w_{x}(t,x;u_0,v_0,h_0)\cr
 a_0(t,x)=1-\chi_1\lambda w(t,x;u_0,v_0,h_0)\cr
 a_1=(\chi_1k -1)\cr
 a_2 = (\chi_1l-a)
\end{cases}
$$
and
$$
\tilde u_0(x)=u(\tilde T,x;u_0,v_0,h_0).
$$
By the comparison principle for the parabolic equations, we have
\begin{equation}
\label{aux-spreading-eq2}
 u(t + \widetilde{T},x;u_0,v_0,h_0) \geq U (t + \widetilde{T},x;\tilde T,\tilde u_0)\quad\mbox{for}\quad t\geq 0,\,\, { 0\le x\le h_\infty-\epsilon},
\end{equation}
where $U (t + \widetilde{T},\cdot;\tilde T,\tilde u_0)$ is the solution of  \eqref{Cutoff-Eq} with $U(\widetilde{T},\cdot;\tilde T,\tilde u_0) = \tilde u_0(\cdot)$.

{By Lemma \ref{apriori-estimate-lm1},
 $$
 |\chi_1 \lambda w(t,x;u_0,h_0)|\le \chi_1(k+l)\epsilon\quad \forall\, t\ge \tilde T,\, \, x\in [0,h(t;u_0,h_0)].
 $$}
 Let
 $$
 M_{\chi_i}=\frac{\chi_ik}{2\sqrt{\lambda}} + \frac{\chi_il}{2\sqrt{\lambda}}.
$$
 By Lemma \ref{thm2-lm1},
$$
|\chi_i w_{x}(t,x;u_0,v_0,h_0)|\le M_{\chi_i} \epsilon \quad \forall\, t\ge \tilde T,\, \, x\in [0,h(t;u_0,v_0,h_0)].
 $$
Then by Lemma \ref{fisher-kpp-lm1}, for $0<\epsilon\ll 1$,
$$
\liminf_{t\to\infty} \|U(t,\cdot;\tilde T,\tilde u_0,\tilde v_0)\|_{C([0,h_\infty-\epsilon])}>0.
$$
This together with \eqref{aux-spreading-eq2} implies that
$$
\liminf_{t\to\infty}\|u(t,\cdot;u_0,v_0,h_0)\|_{C([0,h(t;u_0,v_0,h_0)])}>0,
$$
which is a contradiction. Therefore, $h_\infty\le l^*$.
 Theorem \ref{free-boundary-thm2} (1) is thus proved.
\end{proof}

\subsection{Proof of  Theorem \ref{free-boundary-thm2} (2)}

In this subsection, we prove Theorem \ref{free-boundary-thm2} (2).

\begin{proof}[Theorem \ref{free-boundary-thm2} (2)]

 First, note that  it suffices to prove  that there is $\underline{A}>0$ such that,
  if $h(u_0,v_0,h_0)=\infty$, then for $m\gg 1$,
\begin{equation}
\label{new-spraeding-1}
\liminf_{t\to\infty} \inf_{x\in [0,m]}\big(k u(t,x;u_0,v_0,h_0)+l v(t,x;u_0,v_0,l)\big)\ge \underline{A}.
\end{equation}
 Note also that, by the arguments of
Lemma \ref{apriori-estimate-lm2},
\begin{equation}
\label{new-aux-new-eq0}
\limsup_{t\to\infty}\|u(t,\cdot;u_0,v_0,h_0)\|_{ C([0,h(t;u_0,v_0,h_0)])}\le \bar A_1,\quad \limsup_{t\to\infty}\|v(t,\cdot;u_0,v_0,h_0)\|_{C([0,h(t;u_0,v_0,h_0)])}\le \bar A_2.
\end{equation}
Fix $\tilde A$ satisfying that
$$
\tilde A>\bar A_1,\quad \tilde A>\bar A_2.
$$
Without loss of generality, we may assume that
\begin{equation}
\label{new-aux-new-eq1}
\max\{\|u(t,\cdot;u_0,v_0,h_0)\|_{C([0,h(t;u_0,v_0,h_0)])},  \|v(t,\cdot;u_0,v_0,h_0)\|_{C([0,h(t;u_0,v_0,h_0)])}\}<\tilde A \quad \forall\, t\ge 0.
\end{equation}
By regularity and  a priori estimates for parabolic equations, we may also assume that
there is $\tilde B>0$ such that for all $t\ge 0$ and $0\le x\le h(t;u_0,v_0,h_0)-1$
\begin{equation}
\label{new-aux-new-eq1-1}
|w(t,x;u_0,v_0,h_0)|, |w_x(t,x;u_0,v_0,h_0)|, |w_{xx}(t,x;u_0,v_0,h_0)|,|w_{xt}(t,x;u_0,v_0,h_0)|\le \tilde B.
\end{equation}

Let $R_0>0$ be such that
$$\lambda_p^1(1,0,\frac{1}{2}, R_0)>0, \,\,\lambda_p^1(d,0,\frac{r}{2},R_0)>0, \,\, \lambda_p^2(1,0,\frac{1}{2},\frac{R_0}{2})>0,\,\, \lambda_p^2(d,0,\frac{r}{2},\frac{R_0}{2})>0.
$$
Let
$$
\beta^*=\min\{\beta_1^*(1,\frac{1}{2}, R_0),\beta_1^*(d,\frac{r}{2},R_0),\beta_2^*(1,\frac{1}{2},R_0),\beta_2^*(d,\frac{r}{2},R_0)\}.
$$
Let $R^*\ge R_0$ be such that
$$
\max\{\chi_i  \varepsilon_R \tilde A, \frac{\chi_i  \varepsilon_R \tilde A}{r}\}<\frac{\beta^*}{2}\quad \forall\, R\ge R^*,\,\, i=1,2.
$$
 By
$h_\infty(u_0,v_0,h_0)=\infty$, without loss of generality, we may assume that
\begin{equation}
\label{new-aux-new-eq2}
h(t;u_0,v_0,h_0)>6R^* \quad \forall\, t\ge 0.
\end{equation}

\smallskip

 We divide the rest of the proof into four steps.

\smallskip

\noindent {\bf Step 1.}
In this step, we prove
that  {\it  for any $\epsilon>0$, there are $T_\epsilon>0$ and $\delta_\epsilon>0$  such that for any $t> T_\epsilon$   and
$u_0,v_0,h_0$ satisfying \eqref{new-aux-new-eq1}, \eqref{new-aux-new-eq1-1}, and \eqref{new-aux-new-eq2}, the following hold:
 \begin{itemize}
 \item[(1)] If $$\sup_{0\le x\le 6R^*}(ku(t,x;u_0,v_0,h_0)+lv(t,x;u_0,v_0,h_0))\ge \epsilon,
 $$
 then
 $$\inf_{0\le x\le 5R^*}(ku(t,x;u_0,v_0,h_0)+lv(t,x;u_0,v_0,h_0))\ge \delta_\epsilon.
 $$

 \item[(2)] For any $x_0\in [3R^*,h(t;u_0,v_0,h_0)-3R^*]$, if
  $$\sup_{x_0-3R^*\le x\le x_0+3R^*}(ku(t,x;u_0,v_0,h_0)+lv(t,x;u_0,v_0,h_0))\ge \epsilon,
 $$
 then
 $$\inf_{x_0-2.5R^*\le x\le x_0+2.5R^*}(ku(t,x;u_0,v_0,h_0)+lv(t,x;u_0,v_0,h_0))\ge \delta_\epsilon.
 $$
 \end{itemize}
}

We first prove that (1) holds.  Assume that  the statement in (1) does not hold. Then there are $\epsilon_0>0$, $t_n\to \infty$, and $u_n,v_n,h_n$ satisfying \eqref{new-aux-new-eq1}, \eqref{new-aux-new-eq1-1}, and \eqref{new-aux-new-eq2}  such that
 $$\sup_{0\le x\le 6R^*}(ku(t_n,x;u_n,v_n,h_n)+lv(t_n,x;u_n,v_n,h_n))\ge \epsilon_0
  $$
  and
  $$\inf_{0\le x\le 5R^*}(ku(t_n,x;u_n,v_n,h_n)+lv(t_n,x;u_n,v_n,h_n))\le \frac{1}{n}.
 $$
 Without loss of generality, we may assume that
 $$
 \begin{cases}
 u(t+t_n,x;u_n,v_n,h_n)\to u^*(t,x)\cr
 v(t+t_n,x; u_n,v_n,h_n)\to v^*(t,x)\cr
 w(t+t_n,x;u_n,v_n,h_n)\to w^*(t,x)
 \end{cases}
 $$
 locally uniformly
 as $n\to\infty$.
 Then
 $$\sup_{x\in [0,6R^*]}(ku^*(0,x)+lv^*(0,x))\ge \epsilon_0,
 $$
 $$\inf_{x\in [0,5R^*]}(ku^*(0,x)+lv^*(0,x))=0,
 $$
 and $(u^*(t,x),v^*(t,x))$ satisfies
 \begin{equation*}
 \begin{cases}
 u_t = u_{xx} -\chi_1  u_x  w^*_{x}+  u(1-\chi_1\lambda w^* + (\chi_1k-1)u +(\chi_1l-a)v),& \mbox{in}\quad (0,6R^*)
\\
v_t = dv_{xx} -\chi_2  v_x  w^*_{x}+  v(r-\chi_2\lambda w^* + (\chi_2k-br)u +(\chi_2l-r)v),& \mbox{in}\quad (0,6R^*)
\\
 u_x(t,0)=0,\,\, v_x(t,0)=0,\,\, u(t,6R^*)\ge 0,\,\, v(t,6R^*){\ge 0}
 \end{cases}
\end{equation*}
for all $t\in\RR$. Note that $u^*(t,x)\ge 0$, $v^*(t,x)\ge 0$, and  $\sup_{0\le x\le 6R^*}(ku^*(0,x)+lv^*(0,x))\ge \epsilon_0$. Then by the comparison principle for
parabolic equations, either
$$u^*(t,x)>0\quad \forall\, t\in\RR\,\,\, {\rm  and}\,\,\,  0<x<6R^*
 $$
 or
 $$v^*(t,x)>0\quad \forall\, t\in\RR\,\,\, {\rm and}\,\,\, 0<x<6R^*.
 $$
  In particular,
$\inf_{0\le x\le 5R^*}(ku^*(0,x)+lv^*(0,x))>0$, which is a contradiction. Hence (1) holds.

\smallskip

Next we prove that (2) holds. Assume that  the statement in (2) does not hold. Then there are $\epsilon_0>0$, $t_n\to \infty$,  $u_n,v_n,h_n$ satisfying \eqref{new-aux-new-eq1}, \eqref{new-aux-new-eq1-1}, and \eqref{new-aux-new-eq2}, and $x_n\in [3R^*,h(t_n;u_n,v_n,h_n)-3R^*]$  such that
 $$\sup_{-3R^*\le x\le 3R^*}(ku(t_n,x+x_n;u_n,v_n,h_n)+lv(t_n,x+x_n;u_n,v_n,h_n))\ge \epsilon_0
  $$
  and
  $$\inf_{-2.5R^*\le x\le 2.5R^*}(ku(t_n,x+x_n;u_n,v_n,h_n)+lv(t_n,x+x_n;u_n,v_n,h_n))\le \frac{1}{n}.
 $$
 Without loss of generality, we may assume that
 $$
 \begin{cases}
 u(t+t_n,x+x_n;u_n,v_n,h_n)\to u^*(t,x)\cr
 v(t+t_n,x+x_n; u_n,v_n,h_n)\to v^*(t,x)\cr
 w(t+t_n,x+x_n;u_n,v_n,h_n)\to w^*(t,x)
 \end{cases}
 $$
 locally uniformly
 as $n\to\infty$.
 Then
 $$\sup_{x\in [-3R^*,3R^*]}(ku^*(0,x)+lv^*(0,x))\ge \epsilon_0,
 $$
 $$\inf_{x\in [-2.5R^*,2.5R^*]}(ku^*(0,x)+lv^*(0,x))=0,
 $$
 and $(u^*(t,x),v^*(t,x))$ satisfies
 \begin{equation*}
 \begin{cases}
 u_t = u_{xx} -\chi_1  u_x  w^*_{x}+  u(1-\chi_1\lambda w^* + (\chi_1k-1)u +(\chi_1l-a)v),& \mbox{in}\quad (-3R^*,3R^*)
\\
v_t = dv_{xx} -\chi_2  v_x  w^*_{x}+  v(r-\chi_2\lambda w^* + (\chi_2k-br)u +(\chi_2l-r)v),& \mbox{in}\quad (-3R^*,3R^*)
\\
 u(t,-3R^*)\ge 0,\,\, v(t,-3R^*)\ge 0,\,\, u(t,3R^*)\ge 0,\,\, v(t,3R^*){\ge 0}
 \end{cases}
\end{equation*}
for all $t\in\RR$. Note that $u^*(t,x)\ge 0$, $v^*(t,x)\ge 0$, and  $\sup_{-3R^*\le x\le 3R^*}(ku^*(0,x)+lv^*(0,x))\ge \epsilon_0$. Then by the comparison principle for
parabolic equations, either
$$u^*(t,x)>0\quad \forall\, t\in\RR\,\,\, {\rm  and}\,\,\,  -3R^*<x<3R^*
 $$
 or
 $$v^*(t,x)>0\quad \forall\, t\in\RR\,\,\, {\rm and}\,\,\, -3R^*<x<3R^*.
 $$
  In particular,
$\inf_{-2.5R^*\le x\le 2.5R^*}(ku^*(0,x)+lv^*(0,x))>0$, which is a contradiction. Hence (2) holds.


\smallskip

\smallskip

\noindent {\bf Step 2.} In this step, we prove
that {\it  for any $0<\epsilon\ll 1$, let $T_\epsilon$ be as in Step 1,  there is $\tilde\delta_\epsilon>0$ such that for any $u_0,v_0,h_0$ satisfying \eqref{new-aux-new-eq1}, \eqref{new-aux-new-eq1-1}, and \eqref{new-aux-new-eq2}, and any $t_1,t_2$
 satisfying $T_\epsilon\le t_1<t_2$, the following hold:
\begin{itemize}
\item[(1)]
 If
$$\sup_{0\le x\le 6R^*} (ku(t,x;u_0,v_0,h_0)+lv(t,x;u_0,v_0,h_0))\le \epsilon\quad {\rm for}\quad   t_1\le t\le t_2$$
 and
$$\sup_{0\le x\le 6R^*} (ku( t_1,x;u_0,v_0,h_0)+lv(t_1,x;u_0,v_0,h_0))=\epsilon,
$$
 then
$$
\inf_{0\le x\le 2R^*} (ku(t,x;u_0,v_0,h_0)+lv(t,x;u_0,v_0,h_0))\ge \tilde \delta_\epsilon\quad \forall\,\, t_1\le t\le t_2.
$$

\item[(2)] For given $x_0\in [3R^*, h(t_1;u_0,v_0,h_0)-3R^*]$, if
$$\sup_{x_0-3R^*\le x\le x_0+3R^*} (ku(t,x;u_0,v_0,h_0)+lv(t,x;u_0,v_0,h_0))\le \epsilon\quad {\rm for}\quad   t_1\le t\le t_2$$
 and
$$\sup_{x_0-3R^*\le x\le x_0+3R^*} (ku(t_1,x;u_0,v_0,h_0)+lv(t_1,x;u_0,v_0,h_0))=\epsilon,
$$
 then
$$
\inf_{x_0-R^*\le x\le x_0+R^*} (ku(t,x;u_0,v_0,h_0)+lv(t,x;u_0,v_0,h_0))\ge \tilde \delta_\epsilon\quad \forall\,\, t_1\le t\le t_2.
$$
\end{itemize}
}

\smallskip

 We first prove (1). For given $0<\epsilon \ll 1$, assume that the conditions in (1) are satisfied. By the statement in (1) of Step 1, then
 $$
 {\rm either} \quad \frac{\delta_{\epsilon}}{2k}\le \inf_{0\le x\le 5R^*} u(t_1,x;u_0,v_0,h_0)
 \quad {\rm or}\quad
 \frac{\delta_{\epsilon}}{2l}\le \inf_{0\le x\le 5R^*} l(t_1,x;u_0,v_0,h_0).
 $$
 Without loss of generality, we may assume that
 $$
 \frac{\delta_{\epsilon}}{2k}\le \inf_{0\le x\le 5R^*} u(t_1,x;u_0,v_0,h_0).
 $$
 The other case can be proved similarly.

 Note that $u(t,x;u_0,v_0,h_0)$ satisfies
\begin{equation}
\label{aux-thm2-eq1}
\begin{cases}
u_t=u_{xx}+\tilde \beta(t,x)u_x+u(\tilde a_0(t,x)- u),\quad 0<x<2.5R^*\cr
u_x(t,0)=0,\,\,  u(t,2.5R^*)\ge 0\cr
u(t_1,x)\ge \frac{\delta_{\epsilon}}{2k},\,\, 0\le x\le 2.5R^*,
\end{cases}
\end{equation}
for $t_1<t<t_2$,
where
$$
\tilde \beta(t,x)=-\chi_1 w_{x}(t,x;u_0,v_0)
$$
and
$$
\tilde a_0(t,x)=1-\chi_1\lambda w(t,x;u_0,v_0,h_0)+\chi_1 (ku(t,x;u_0,v_0,h_0)+l v(t,x;u_0,v_0,h_0))-a v(t,x;u_0,v_0,h_0).
$$
By Lemma \ref{thm2-lm2}, for $0<\epsilon\ll 1$,
$$
|\tilde \beta(t,x)|\le \beta^* \quad \forall\, t_1\le t\le t_2,\,\, 0\le x\le 2.5R^*
$$
and
$$
\tilde a_0(t,x)\ge \frac{1}{2} \quad \forall\, t_1\le t\le t_2,\,\, 0\le x\le 2.5R^*.
$$

Consider
\begin{equation}
\label{aux-thm2-eq2-0}
\begin{cases}
U_t=U_{xx}+ \tilde \beta(t,x) U_x+U(\frac{1}{2}-U),\quad 0<x<2.5R^*\cr
U_x(t,0)=U(t,2.5R^*)=0\cr
U(t_1,x)= \tilde u_0(x),\quad 0\le x\le 2.5R^*,
\end{cases}
\end{equation}
where
$\tilde u_0(x)=\frac{\delta_\epsilon}{2k} \cos(\frac{\pi x}{5R^*})$. Then
$$
\tilde u_0(x)\le u(t_1,x;u_0,v_0,h_0),\quad 0\le x\le 2.5R^*.
$$
By Lemma \ref{fisher-kpp-lm1}, there is $\tilde \delta_\epsilon>0$ such that
$$
U(t,x;t_1,\tilde u_0)>\tilde \delta_\epsilon/k\quad \forall\,\, t>t_1,\,\, 0\le x\le 2R^*,
$$
where $U(t,\cdot;t_1,\tilde u_0)$ is the solution  of \eqref{aux-thm2-eq2-0}.
By the comparison principle for parabolic equations,
$$
u(t,x;u_0,v_0,h_0)\ge U(t,x;t_1,\tilde u_0)\quad \forall\,\, t_1\le t\le t_2,\,\, 0\le x\le 2R^*.
$$
It then follows that
$$
\inf_{0\le x\le 2R^*} (ku(t,x;u_0,v_0,h_0)+lv(t,x;u_0,v_0,h_0))\ge \tilde \delta_\epsilon\quad \forall\,\, t_1\le t\le t_2.
$$

\smallskip

Next, we prove (2). By the statement in (2) of Step 1,
 without loss of generality, we may assume that
 $$
 \frac{\delta_{\epsilon}}{2k}\le \inf_{-2.5R^*\le x\le 2.5R^*} u(t_1,x+x_0;u_0,v_0,h_0).
 $$
 Note that $u(t,x+x_0;u_0,v_0,h_0)$ satisfies
\begin{equation}
\label{aux-thm2-eq2}
\begin{cases}
u_t=u_{xx}+\tilde \beta(t,x)u_x+u(\tilde a_0(t,x)- u),\quad -1.25R^*<x<1.25R^*\cr
u(t,-1.25R^*)\ge 0,\,\,  u(t,1.25R^*)\ge 0\cr
u(t_1,x)\ge \frac{\delta_{\epsilon}}{2k},\,\, -1.25\le x\le 1.25R^*,
\end{cases}
\end{equation}
for $t_1<t<t_2$,
where
$$
\tilde \beta(t,x)=-\chi_1 w_{x}(t,x+x_0;u_0,v_0)
$$
and
\begin{align*}
\tilde a_0(t,x)=&1-\chi_1\lambda w(t,x+x_0u_0,v_0,h_0)+\chi_1 (ku(t,x+x_0;u_0,v_0,h_0)+l v(t,x+x_0;u_0,v_0,h_0))\\
&-a v(t,x+x_0;u_0,v_0,h_0).
\end{align*}
By  Lemma \ref{thm2-lm2} again, for $0<\epsilon\ll 1$,
$$
|\tilde \beta(t,x)|\le \beta^* \quad \forall\, t_1\le t\le t_2,\,\, -1.25R^*\le x\le 1.25R^*
$$
and
$$
\tilde a_0(t,x)\ge \frac{1}{2} \quad \forall\, t_1\le t\le t_2,\,\, -1.25R^*\le x\le 1.25R^*.
$$

Consider
\begin{equation}
\label{aux-thm2-eq5}
\begin{cases}
U_t=U_{xx}+ \tilde \beta(t,x) U_x+U(\frac{1}{2}-U),\quad -1.25R^*<x<1.25R^*\cr
U(t,-1.25R^*)=U(t,1.25R^*)=0\cr
U(t_1,x)= \tilde u_0(x),\quad -1.25\le x\le 1.25R^*,
\end{cases}
\end{equation}
where
$\tilde u_0(x)=\frac{\delta_\epsilon}{2k} \cos(\frac{\pi x}{2.5R^*})$. Then
$$
\tilde u_0(x)\le u(t_1,x;u_0,v_0,h_0),\quad -1.25R^*\le x\le 1.25R^*.
$$
By Lemma \ref{fisher-kpp-lm2}, there is $\tilde \delta_\epsilon>0$ such that
$$
U(t,x;t_1,\tilde u_0)>\tilde \delta_\epsilon/k\quad \forall\,\, t>t_1,\,\, -R^*\le x\le R^*,
$$
where $U(t,\cdot;t_1,\tilde u_0)$ is the solution  of \eqref{aux-thm2-eq5}.
By the comparison principle for parabolic equations,
$$
u(t,x+x_0;u_0,v_0,h_0)\ge U(t,x;t_1,\tilde u_0)\quad \forall\,\, t_1\le t\le t_2,\,\, -R^*\le x\le R^*.
$$
It then follows that
$$
\inf_{-R^*\le x\le R^*} (ku(t,x+x_0;u_0,v_0,h_0)+lv(t,x+x_0;u_0,v_0,h_0))\ge \tilde \delta_\epsilon\quad \forall\,\, t_1\le t\le t_2.
$$

\medskip

\noindent {\bf Step 3.} In this step, we prove that {\it  there is $\tilde \epsilon_0>0$ such that for any $u_0,v_0,h_0$ satisfying \eqref{new-aux-new-eq1}, \eqref{new-aux-new-eq1-1}, and \eqref{new-aux-new-eq2}, the following hold.
\begin{itemize}
\item[(1)]
\begin{equation}
\label{new-aux-new-eq3}
\limsup_{t\to\infty}\|ku(t,\cdot;u_0,v_0,h_0)+lv(t,\cdot;u_0,v_0,h_0)\|_{C([0,6R^*]}\ge \tilde \epsilon_0
\end{equation}

\item[(2)] For any $t_0\ge 0$ and $x_0\in [3R^*, h(t_0;u_0,v_0,h_0)-3R^*]$,
\begin{equation}
\label{new-aux-new-eq3-1}
\limsup_{t\to\infty}\|ku(t,\cdot;u_0,v_0,h_0)+lv(t,\cdot;u_0,v_0,h_0)\|_{C([x_0-3R^*,x_0+3R^*])}\ge \tilde \epsilon_0
\end{equation}

\end{itemize}
}

 We first prove (1).
 Assume that \eqref{new-aux-new-eq3} does not hold.
Then  there are $u_0^n, v_0^n, h_0^n$ satisfying \eqref{new-aux-new-eq1}, \eqref{new-aux-new-eq1-1}, and \eqref{new-aux-new-eq2} such that
$$
\limsup_{t\to\infty} \|ku(t,\cdot;u_0^n,v_0^n,h_0^n)+l v(t,\cdot;u_0^n,v_0^n,h_0^n)\|_{C([0,6R^*]}<\frac{1}{n}\quad \forall\, n\ge 1.
$$
This implies that, for each $n\ge 1$,  there is $T_n>0$ such that
$$
ku(t,x;u_0^n,v_0^n,h_0^n)+lv(t,x;u_0^n,v_0^n,h_0^n)<\frac{1}{n}\quad \forall\, t\ge T_n, \,\, x\in [0,6R^*].
$$
Note that
$$
ku(T_n,x;u_0^n,v_0^n,h_0^n)+lv(T_n,x;u_0^n,v_0^n,h_0^n)>0\quad \forall\, 0\le x\le 6R^*.
$$
Therefore, for each $n\ge 1$,
$$
{\rm either}\quad \inf_{0\le x\le 5R^*} u(T_n,x;u_0^n,v_0^n,h_0^n)>0 \quad {\rm or}\quad
\inf_{0\le x\le 5R^*} v(T_n,x;u_0^n,v_0^n,h_0^n)>0.
$$
Fix $n\gg 1$ such that $\frac{1}{n}<k \min\{\epsilon_1^*(1,\frac{1}{2},2.5R^*),\epsilon_1^*(d,\frac{r}{2},2.5R^*)\}$.
Without loss of generality, we assume that
$$
\inf_{0\le x\le 5R^*} u(T_n,x;u_0^n,v_0^n,h_0^n)>0.
$$
Let $U(t,x;t_1,\tilde u_0^n)$ be as in Step 2 with $t_1=T_n$ and $\tilde u_0^n(x)=\delta_n \cos(\frac{\pi x}{5R^*})$, where $$\delta_n= \inf_{0\le x\le 2.5R^*} u(T_n,x;u_0^n,v_0^n,h_0^n).
$$
 By Lemma \ref{fisher-kpp-lm1}.
$$
 \limsup_{t\to\infty} \|U(t,\cdot;t_,\tilde u_0^n)\|_{C([0,2R^*])}\ge \epsilon_1^*(1,\frac{1}{2},2.5R^*)
$$
 This  implies that
$$
\limsup_{t\to\infty} \|ku(t,\cdot;u_0^n,v_0^n,h_0^n)+l v(t,\cdot;u_0^n,v_0^n,h_0^n)\|_{C([0,2R^*])}\ge k\epsilon_1^*>\frac{1}{n},
$$
which is a contradiction. Hence \eqref{new-aux-new-eq3} holds.

\smallskip

Next, we prove (2). Assume that (2) does not hold. Then  there are $u_0^n, v_0^n, h_0^n$ satisfying \eqref{new-aux-new-eq1}, \eqref{new-aux-new-eq1-1}, and \eqref{new-aux-new-eq2}, and $t_0^n\ge 0$, $x_n\in [3R^*,h(t_0^n;u_n,v_n,h_n)-3R^*]$ such that
$$
\limsup_{t\to\infty} \|ku(t,\cdot;u_0^n,v_0^n,h_0^n)+l v(t,\cdot;u_0^n,v_0^n,h_0^n)\|_{C([x_n-3R^*,x_n+3R^*]}<\frac{1}{n}\quad \forall\, n\ge 1.
$$
This implies that, for each $n\ge 1$,  there is $T_n>0$ such that
$$
ku(t,x_n+x;u_0^n,v_0^n,h_0^n)+lv(t,x_n+x;u_0^n,v_0^n,h_0^n)<\frac{1}{n}\quad \forall\, t\ge T_n, \,\, x\in [-3R^*,3R^*].
$$
Note that
$$
ku(T_n,x_n+x;u_0^n,v_0^n,h_0^n)+lv(T_n,x_n+x;u_0^n,v_0^n,h_0^n)>0\quad \forall\, -3R^*\le x\le 3R^*.
$$
Therefore, for each $n\ge 1$,
$$
{\rm either}\,\, \inf_{-2.5R^*\le x\le 2.5R^*} u(T_n,x_n+x;u_0^n,v_0^n,h_0^n)>0 \,\, {\rm or}\,\,
\inf_{-2.5R^*\le x\le 2.5R^*} v(T_n,x_n+x;u_0^n,v_0^n,h_0^n)>0.
$$
Fix $n\gg 1$ such that $\frac{1}{n}<k \min\{\epsilon_2^*(1,\frac{1}{2},2.5R^*),\epsilon_2^*(d,\frac{r}{2},2.5R^*)\}$.
Without loss of generality, we assume that
$$
\inf_{-2.5R^*\le x\le 2.5R^*} u(T_n,x_n+x;u_0^n,v_0^n,h_0^n)>0.
$$
Then by similar arguments in the above and Lemma \ref{fisher-kpp-lm2},
$$
\limsup_{t\to\infty} \|ku(t,\cdot;u_0^n,v_0^n,h_0^n)+l v(t,\cdot;u_0^n,v_0^n,h_0^n)\|_{C([0,2R^*])}\ge k\epsilon_2^*(1,\frac{1}{2}, 2.5R^*)>\frac{1}{n},
$$
which is a contradiction. Hence \eqref{new-aux-new-eq3-1} holds.

\smallskip

\smallskip

\noindent {\bf Step 4.} In this step, we prove
that {\it there is $\underline{A}>0$ such that for any $m>6R^*$ and any $u_0,v_0,h_0$ satisfying \eqref{new-aux-new-eq1} and
\eqref{new-aux-new-eq2},
\begin{equation}
\label{new-aux-new-eq4}
\liminf_{t\to\infty} \inf_{0\le x\le m}(ku(t,x;u_0,v_0,h_0)+lv(t,x;u_0,h_0,h_0))\ge \underline{A}.
\end{equation}}

First, let $x_n=nR^*$ for $n=1,2,\cdots$. There is $N_0\ge 6$ such that $N_0R^*\le m$ and $(N_0+1)R^*>m$.
Without loss of generality, we may assume that $h(t;u_0,v_0,h_0)>m$ for all $t\ge 0$.
Let $\tilde \epsilon_0$ be as in Step 3. Let $T_0>0$ and $T_n>0$ be such that
$$
\|ku(T_0,\cdot;u_0,v_0,h_0)+lv(T_0,\cdot;u_0,v_0,h_0)\|_{C([0,6R^*])}\ge \frac{\tilde \epsilon_0}{2}
$$
and
$$
\|ku(T_n,x_n+\cdot;u_0,v_0,h_0)+lv(T_n,x_n+\cdot;u_0,v_0,h_0)\|_{C([-3R^*,3R^*])}\ge \frac{\tilde \epsilon_0}{2}
$$
for $n=3,4,\cdots,N_0$.

Choose $0<\epsilon\ll \frac{\tilde \epsilon_0}{2}$. Then
$$
\|ku(T_0,\cdot;u_0,v_0,h_0)+l v(T_0,\cdot;u_0,v_0,h_0)\|_{C([0,6R^*])}>\epsilon.
$$
 Let
$$
I(\epsilon)=\{t>T_0\,|\, \|ku(t,\cdot;u_0,v_0,h_0)+l v(t,\cdot;u_0,v_0,h_0)\|_{C([0,6R^*]}<\epsilon\}.
$$
Then $I(\epsilon)$ is an open subset of $(T_0,\infty)$ and $I(\epsilon)\not = (T_0,\infty)$.
Therefore, if $I(\epsilon)\not =\emptyset$, there are $T_0<s_n<t_n$ such that
$$
I(\epsilon)=\cup_{n} (s_n,t_n).
$$
By the statements in Step 1, for any $t\in [T_0,\infty)\setminus I(\epsilon)$,
$$
\inf_{0\le x\le 2R^*} (ku(t,x;u_0,v_0,h_0)+l v(t,\cdot;u_0,v_0,h_0))\ge \delta_\epsilon.
$$
For any $t\in I(\epsilon)$, there is $n$ such that $t\in (s_n,t_n)$. Note that
$\|u(s_n,\cdot;u_0,v_0,h_0)\|_{C([0,6R^*])}=\epsilon$. Then by the statements in Steps 1 and 2,
$$
\inf_{0\le x\le 2R^*} (ku(t,x;u_0,v_0,h_0)+l v(t,x;u_0,v_0,h_0))\ge {\tilde\delta_\epsilon}.
$$
It then follows that
\begin{equation}
\label{step4-eq1}
\liminf_{t\to\infty}\inf_{0\le x\le 2R^*} (ku(t,x;u_0,v_0,h_0)+l v(t,x;u_0,v_0,h_0))\ge \min\{\delta_\epsilon, \tilde\delta_\epsilon\}.
\end{equation}

Similarly, it can be proved that
\begin{equation}
\label{step4-eq2}
\liminf_{t\to\infty}\inf_{-R^*\le x\le R^*} (ku(t,x+x_n;u_0,v_0,h_0)+l v(t,x+x_n;u_0,v_0,h_0))\ge \min\{\delta_\epsilon, \tilde\delta_\epsilon\}\,\, n=3,4,\cdots, N_0.
\end{equation}
By \eqref{step4-eq1} and \eqref{step4-eq2},
\begin{equation}
\label{step4-eq3}
\liminf_{t\to\infty}\inf_{0\le x\le m} (ku(t,x;u_0,v_0,h_0)+l v(t,x;u_0,v_0,h_0))\ge \min\{\delta_\epsilon, \tilde\delta_\epsilon\}.
\end{equation}
\eqref{new-aux-new-eq3} then follows with $\underline{A}=\min\{\delta_\epsilon, \tilde\delta_\epsilon\}$ and \ref{free-boundary-thm2} (2) is thus proved.
\end{proof}

\section{Vanishing-spreading dichotomy in the strong sense}

In this section, we study the spreading and vanishing scenarios for  \eqref{one-free-boundary-eq-0} in the strong sense and prove Theorem \ref{free-boundary-thm3}.
Throughout this section,  $(u(\cdot,\cdot;u_0,v_0,h_0)$, $v(\cdot,\cdot;u_0,v_0,h_0)$, $w(\cdot,\cdot;u_0,v_0,h_0)$, $h(t;u_0,v_0,h_0))$ denotes the classical solution of \eqref{one-free-boundary-eq-0} with
\newline
$u(0,\cdot;u_0,v_0,h_0)=u_0(\cdot)$, $v(0,\cdot;u_0,v_0,h_0)=v_0(\cdot)$,
 and $h(0;u_0,v_0,h_0)=h_0$, where  $h_0>0$ and $u_0(\cdot),v_0(\cdot)$ satisfy \eqref{Initial-uv}.

\begin{proof}[Proof of Theorem \ref{free-boundary-thm3}]

(1) This is Theorem \ref{free-boundary-thm2} (1).

\smallskip

(2) Suppose that $h_\infty(u_0,v_0,h_0)=\infty$.
Note that, by the arguments of
Lemma \ref{apriori-estimate-lm2},
\begin{equation}
\label{aux-new-eq0}
\limsup_{t\to\infty}\|u(t,\cdot;u_0,v_0,h_0)\|_{C([0,h(t;u_0,v_0,h_0)])}\le \bar A_1,\quad \limsup_{t\to\infty}\|v(t,\cdot;u_0,v_0,h_0)\|_{C([0,h(t;u_0,v_0,h_0)])}\le \bar A_2.
\end{equation}
Fix $\tilde A_1$ and $\tilde A_2$ satisfying that
$$
\tilde A_1>\bar A_1,\quad \tilde A_2>\bar A_2,
$$
and that \eqref{H2} and \eqref{Cond-chi-1-2-A} hold with $\bar A_1$ and $\bar A_2$ being replaced by
$\tilde A_1$ and $\tilde A_2$, respectively, that is, the following hold:
\begin{equation}\label{new-H3}
 1 > a\tilde {A}_2+\chi_1 k\tilde  A_1\hspace{0.1cm},\hspace{0.1cm} r > rb\tilde {A}_1+\chi_2 l\tilde  A_2,
\end{equation}
\begin{equation}\label{new-Cond-chi-1-2-B}
\chi_1{<} \frac{4\sqrt \lambda \big( 1 - a\tilde {A}_2-\chi_1 k\tilde  A_1  \big)^{1/2}}{\tilde {A}_1k + \tilde {A}_2l},\,\,  \chi_2{ <}\frac{4\sqrt{d\lambda} \big(r - rb\tilde {A}_1-\chi_2 l\tilde  A_2 \big)^{1/2}}{\tilde {A}_1k + \tilde {A}_2l}.
\end{equation}
Let
$$\beta_i=\frac{\chi_i k}{2\sqrt \lambda}\tilde A_1+\frac{\chi_i l}{2\sqrt \lambda}\tilde A_2, i=1,2
$$
and
$$
d_1=1,\quad a_1=1- a\tilde {A}_2-\chi_1 k\tilde  A_1, \quad d_2=d,\quad a_2=r-rb\tilde {A}_1-\chi_2
 l\tilde  A_2.
$$
By (H2) and Lemmas \ref{principal-eigenvalue-lm1} and \ref{principal-eigenvalue-lm2}, there is $l_0^*>0$ such that $\lambda_p^i(d_1,\beta_1, a_1,l)>0$ and
$\lambda_p^i(d_2,\beta_2,a_2,l)>0$
for $l\ge l_0^*$ and $i=1,2$.

In the following, we prove that (2) holds  for any given $m\ge l_0^*$.  By \eqref{aux-new-eq0} and
$h_\infty(u_0,v_0,h_0)=\infty$, without loss of generality, we may assume that
\begin{equation}
\label{aux-new-eq1}
h(t;u_0,v_0,h_0)>3m \quad \forall\, t\ge 0
\end{equation}
and
\begin{equation}
\label{aux-new-eq2}
u(t,x;u_0,v_0,h_0)<\tilde A_1,\,\, v(t,x;u_0,v_0,h_0)<\tilde A_2 \,\, \forall\, t\ge 0,\,\, x\in[0,h(t;u_0,v_0,h_0)].
\end{equation}
Observe that
\begin{equation}
\label{aux-new-eq3}
\inf_{x\in [0,3m]}u_0(x)>0,\quad \inf_{x\in [0,3m]} u_0(x)>0.
\end{equation}

We claim that there are $\underline{A_1}>0$ and $\underline{A}_2>0$ such that
\begin{equation}
\label{aux-new-eq4}
\liminf_{t\to\infty}\inf_{0\le x\le m} u(t,x;u_0,v_0,h_0)\ge \underline{A_1}
\end{equation}
and
\begin{equation}
\label{aux-new-eq5}
\liminf_{t\to\infty}\inf_{0\le x\le m} v(t,x;u_0,v_0,h_0)\ge \underline{A_2}.
\end{equation}

  First, note that $u(t,x;u_0,v_0,h_0)$ satisfies
\begin{equation}
\label{aux-thm2-eq3}
\begin{cases}
u_t=u_{xx}+\tilde \beta(t,x)u_x+u(\tilde a_0(t,x)- a_1u),\quad 0<x<3m\cr
u_x(t,0)=0,\,\,  u(t,3m)\ge 0
\end{cases}
\end{equation}
for $t>0$,
where
$$
\begin{cases}
\tilde \beta(t,x)=-\chi_1 w_{x}(t,x;u_0,v_0)\cr
\tilde a_0(t,x)=1-\chi_1\lambda w(t,x;u_0,v_0,h_0)+
(\chi_1  l-a) v(t,x;u_0,v_0,h_0)\cr
 a_1= 1 - \chi_1k.
\end{cases}
$$
By Lemma \ref{thm2-lm1},
$$
|\tilde \beta(t,x)|\le \beta_1=\frac{\chi_1}{2\sqrt \lambda} (k\tilde A_1+l\tilde A_2)\quad \forall\, t\ge 0,\,\, x\in [0,h(t;u_0,v_0,h_0)].
$$
By Lemma \ref{apriori-estimate-lm1},
$$
\tilde a_0(t,x)\ge \tilde a_1=1- a\tilde {A}_2-\chi_1 k\tilde  A_1 \quad \forall\, t\ge 0,\,\, x\in [0,h(t;u_0,v_0,h_0)].
$$

Second, consider
\begin{equation}
\label{aux-thm2-eq4}
\begin{cases}
U_t=U_{xx}+ \beta_0 U_x+U(\tilde a_1-(1- \chi_1k)U),\quad 0<x<3m\cr
U_x(t,0)=U(t,3m)=0\cr
U(0,x)=\tilde u_0(x),\quad 0\le x\le 3m,
\end{cases}
\end{equation}
where
$\tilde u_0(x)=\inf_{0\le x\le 3m}u_0(x)  \cos(\frac{\pi x}{6m}), \beta_0 = \sup_{0\leq t, 0\leq x\leq 3m} \tilde \beta(t,x)$.
Note that  $\tilde u_0'(x)\leq 0 $, by the comparison principle for parabolic equations,
$$
U_x(t,x;\tilde u_0)\le 0\quad \forall\,\, t>0,\, 0\le x<3m,
$$
where $U(t,\cdot;\tilde u_0)$ is the solution  of \eqref{aux-thm2-eq4}.
This implies that
$$
\tilde \beta(t,x) U_x(t,x;\tilde u_0)\ge \beta_0 U_x(t,x;\tilde u_0)\quad \forall\,\, t>0,\,\, 0\le x\le 3m.
$$
By the comparison principle for parabolic equations again,
\begin{equation}
\label{aux-new-eq6}
u(t,x;u_0,v_0,h_0)\ge U(t,x;\tilde u_0)\quad \forall\,\, t\gg 1,\,\, 0\le x\le 3m.
\end{equation}

Third, let $m_0=3l_0^*$, $x_n=nl_0^*$ for $n=1,2,\cdots$. There is $N_m\ge 1$ such that
$N_m l_0^*\le m$ and $(N_m+1)l_0^*>m$.  Consider
\begin{equation}
\label{aux-new-eq7}
\begin{cases}
U_t=U_{xx}+ \beta_0 U_x+U(\tilde a_1-(1- \chi_1k)U),\quad 0<x<3l_0^*\cr
U_x(t,0)=U(t,3l_0^*)=0\cr
U(0,x)=\tilde u_0^1(x),\quad 0\le x\le 3l_0^*,
\end{cases}
\end{equation}
and
\begin{equation}
\label{aux-new-eq8}
\begin{cases}
U_t=U_{xx}+ \beta_0 U_x+U(\tilde a_1-(1- \chi_1k)U),\quad (n-2)l_0^*<x<(n+2)l_0^*\cr
U(t,(n-2)l_0^*)=U(t,(n+2)l_0^*)=0\cr
U(0,x)=\tilde u_0^n(x),\quad (n-2)l_0^*\le x\le (n+2)l_0^*,
\end{cases}
\end{equation}
for $n=2,3,\cdots, N_m$, where
$\tilde u_0^1(x)=\delta \cos(\frac{\pi x}{6 l_0^*})$ and
$\tilde u_0^n(x)=\delta \cos (\frac{\pi (x-nl_0^*)}{4 l_0^*})$ and $0<\delta\ll 1$ is such that
$$
\begin{cases}
\tilde u_0^1(x)\le \tilde u_0(x)\quad \forall\, x\in [0,3l_0^*]\cr
\tilde u_0^n(x)\le \tilde u_0(x)\quad \forall\, x\in [x_n-2l_0^*,x_n+2l_0^*],\,\, n=2,3,\cdots, N_m.
\end{cases}
$$
Let $U_1(t,x;\tilde u_0^1)$ be the solution of \eqref{aux-new-eq7} and $U_n(t,x;\tilde u_0^n)$ be the solution of \eqref{aux-new-eq8} for $n=2,3,\cdots, N_m$.
Then by the comparison principle for parabolic equations,
\begin{equation}
\label{aux-new-eq9}
\begin{cases}
U_1(t,x; \tilde u_0^1)\le  U(t,x;\tilde u_0)\quad \forall\, t\ge 0,\,\, \in [0,3l_0^*]\cr
U_2(t,x; \tilde u_0(x)\le U(t,x; \tilde u_0)\quad \forall\, t\ge 0,\,\, x\in [x_n-2l_0^*,x_n+2l_0^*],\,\, n=2,3,\cdots, N_m.
\end{cases}
\end{equation}
By Lemma \ref{fisher-kpp-lm1},
\begin{equation}
\label{aux-new-eq10}
\liminf_{t\to\infty} \inf_{x\in [0,2l_0^*]}U_1(t,x;\tilde u_0^1)\ge \epsilon_1^*
\end{equation}
and
\begin{equation}
\label{aux-new-eq11}
\liminf_{t\to\infty}\inf_{x\in [x_n-l_0^*,x_n+l_0^*]}U_n(t,x;\tilde u_0^n)\ge \epsilon_2^*,\quad n=2,3,\cdots, N_m.
\end{equation}
By \eqref{aux-new-eq9}, \eqref{aux-new-eq10}, and \eqref{aux-new-eq11},
\begin{align*}
\liminf_{t\to\infty} \inf_{x\in [0,m]}u(t,x;u_0,v_0,h_0)&\ge \liminf_{t\to\infty} \inf_{x\in [0,m]}U(t,x;\tilde u_0)\\
&\ge \underline{A}_1,
\end{align*}
where $\underline{A}_1=\min\{\epsilon_1^*,\epsilon_2^*\}$.
This proves
\eqref{aux-new-eq4}.

 \eqref{aux-new-eq5} can be proved similarly. (2) is thus proved.

 \medskip

 (3) Assume that (3) does not hold. Then there are $u_0,v_0,h_0$ satisfying  \eqref{Initial-uv} and
 $h_\infty(u_0,v_0,h_0)=\infty$, $m_0>0$, $\epsilon_0>0$, $x_n\in [0,m_0]$, and $t_n\to \infty$ such that
 \begin{equation}
 \label{aux-new-eq12}
 |u(t_n,x_n;u_0,v_0,h_0)-\frac{1-a}{1-ab}|+|v(t_n,x_n;u_0,v_0,h_0)-\frac{1-b}{1-ab}|\ge \epsilon_0\quad \forall\, n\ge 1.
 \end{equation}
 Let
 $$
 u_n(t,x)=u(t+t_n,x;u_0,v_0,h_0), \,\, v_n(t,x)=v(t+t_n,x;u_0,v_0,h_0), \,\, w_n(t,x)=w(t+t_n,x;u_0,v_0,h_0).
 $$
 Without loss of generality, we may assume that there are $u^*(t,x),v^*(t,x),w^*(t,x)$ such that
 $$
 \lim_{n\to\infty}u_n(t,x)=u^*(t,x),\,\, \lim_{n\to\infty} v_n(t,x)=v^*(t,x),\,\, \lim_{n\to\infty} w_n(t,x)=w^*(t,x)
 $$
 locally uniformly in $t\in\RR$ and $x\in [0,\infty)$.  Then $(u^*(t,x),v^*(t,x),w^*(t,x))$ is a positive entire solution
 of \eqref{one-free-boundary-sp}. Moreover, by (2)
 $$
 \inf_{t\in\RR,x\in [0,\infty)} u^*(t,x)>0,\quad \inf_{t\in\RR,x\in [0,\infty)}v^*(t,x)>0.
 $$
 By Lemma \ref{Stability-lem},
 $$
 (u^*(t,x),v^*(t,x))\equiv (\frac{1-a}{1-ab},\frac{1-b}{1-ab}).
 $$
 This implies that
 $$
 |u_n(0,x)-\frac{1-a}{1-ab}|=|u(t_n,x;u_0,v_0,h_0)-\frac{1-a}{1-ab}|\to 0
 $$
 and
 $$
 |v_n(0,x)-\frac{1-b}{1-ab}|=|v(t_n,x;u_0,v_0,h_0)-\frac{1-b}{1-ab}|\to 0
 $$
 as $n\to \infty$ uniformly in $x\in [0,m_0]$,  which contradicts to \eqref{aux-new-eq12}. Therefore, (3) holds.
\end{proof}

\section{Vanishing-spreading dichotomy in the weak sense}

In this section, we study the spreading and vanishing scenarios for  \eqref{one-free-boundary-eq-0} in the weak sense and prove Theorem \ref{free-boundary-thm4}.
Throughout this section,  $(u(\cdot,\cdot;u_0,v_0,h_0)$, $v(\cdot,\cdot;u_0,v_0,h_0)$, $w(\cdot,\cdot;u_0,v_0,h_0)$, $h(t;u_0,v_0,h_0))$ denotes the classical solution of \eqref{one-free-boundary-eq-0} with $u(0,\cdot;u_0,v_0,h_0)=u_0(\cdot)$, \newline
$v(0,\cdot;u_0,v_0,h_0)=v_0(\cdot)$,
 and $h(0;u_0,v_0,h_0)=h_0$, where  $h_0>0$ and $u_0(\cdot),v_0(\cdot)$ satisfy \eqref{Initial-uv}.

\begin{proof} [Proof Theorem \ref{free-boundary-thm4}]

(1) Assume {\bf (H3)}. By similar arguments to those in Theorem \ref{free-boundary-thm3} (2), there is
$\underline{A}_1>0$ such that for any $u_0,v_0,h_0$ satisfying \eqref{Initial-uv} and
$h_\infty(u_0,v_0,h_0)=\infty$, and any $m>0$,
\begin{equation}
\label{aux-new-eq13}
\liminf_{t\to\infty}\inf_{x\in [0,m]}u(t,x;u_0,v_0,h_0)\ge \underline{A}_1.
\end{equation}

Assume that \eqref{asymp-be1} does not hold.  Then there are $u_0,v_0,h_0$ satisfying  \eqref{Initial-uv} and
 $h_\infty(u_0,v_0,h_0)=\infty$, $m_0>0$, $\epsilon_0>0$, $x_n\in [0,m_0]$, and $t_n\to \infty$ such that
 \begin{equation}
 \label{aux-new-eq14}
 |u(t_n,x_n;u_0,v_0,h_0)-1|+|v(t_n,x_n;u_0,v_0,h_0)|\ge \epsilon_0\quad \forall\, n\ge 1.
 \end{equation}
 Let
 $$
 u_n(t,x)=u(t+t_n,x;u_0,v_0,h_0), \,\, v_n(t,x)=v(t+t_n,x;u_0,v_0,h_0), \,\, w_n(t,x)=w(t+t_n,x;u_0,v_0,h_0).
 $$
 Without loss of generality, we may assume that there are $u^*(t,x),v^*(t,x),w^*(t,x)$ such that
 $$
 \lim_{n\to\infty}u_n(t,x)=u^*(t,x),\,\, \lim_{n\to\infty} v_n(t,x)=v^*(t,x),\,\, \lim_{n\to\infty} w_n(t,x)=w^*(t,x)
 $$
 locally uniformly in $t\in\RR$ and $x\in [0,\infty)$.  Then $(u^*(t,x),v^*(t,x),w^*(t,x))$ is a positive entire solution
 of \eqref{one-free-boundary-sp}.  By \eqref{aux-new-eq13},
 $$
 \inf_{t\in\RR,x\in [0,\infty)} u^*(t,x)>0.
 $$
 By Lemma \ref{Stability-lem-1},
 $$
 (u^*(t,x),v^*(t,x))\equiv (1,0).
 $$
 This implies that
 $$
 |u_n(0,x)-1|=|u(t_n,x;u_0,v_0,h_0)-1|\to 0
 $$
 and
 $$
 v_n(0,x)=v(t_n,x;u_0,v_0,h_0)\to 0
 $$
 as $n\to \infty$ uniformly in $x\in [0,m_0]$,  which contradicts to \eqref{aux-new-eq14}. Therefore, (1) holds.

 (2) can be proved similarly.
\end{proof}

\end{document}